\documentclass[a4paper,12pt,oneside,reqno]{amsart}
\usepackage{hyperref}
\usepackage[headinclude,DIV13]{typearea}
\areaset{15.1cm}{25.0cm}
\parskip 0pt plus .5pt
\usepackage{amsfonts,amssymb,amsmath,amsthm,bbm}
\usepackage[latin1] {inputenc}
\usepackage{graphicx, psfrag}
\usepackage{subfigure}

\newtheorem{theorem}{Theorem}[section]
\newtheorem{lemma}[theorem]{Lemma}
\newtheorem{proposition}[theorem]{Proposition}
\newtheorem{corollary}[theorem]{Corollary}

\theoremstyle{definition}

\theoremstyle{remark}
\newtheorem*{remark}{Remark}

\def\paragraph#1{\noindent \textbf{#1}}

\numberwithin{equation}{section}

\def\d{\mathrm{d}}

\def\<{\langle}
\def\>{\rangle}

\def\e{\epsilon}

\def\g{\gamma}

\def\s{\sigma}

\def\o{\omega}
\def\D{\Delta}
\def\L{\Lambda}

\def\del{\partial}
\def\R{{\Bbb R}}  
\def\N{{\Bbb N}}  
\def\P{{\Bbb P}}

\def\E{{\Bbb E}}

\let\cal=\mathcal

\def\CC{{\cal C}}
\def\DD{{\cal D}}
\def\EE{{\cal E}}
\def\FF{{\cal F}}
\def\GG{{\cal G}}

\def\NN{{\cal N}}

\def\TT{{\cal T}}

 \def \L {{\Lambda}}

 \def \s {{\sigma}}

 \def \D {{\Delta}}

 \def \g {{\gamma}}
 
 \def \d {{\delta}}
 
 \def \o {{\omega}}

 \def \del {{\partial}}

 \def \ba {\begin{array}}
 \def \ea {\end{array}}



 \newcommand{\be}{\begin{equation}}
 \newcommand{\ee}{\end{equation}}

\newcommand{\bea}{\begin{eqnarray}}
 \newcommand{\eea}{\end{eqnarray}}
\def\TH(#1){\label{#1}}\def\thv(#1){\ref{#1}}
\def\Eq(#1){\label{#1}}\def\eqv(#1){(\ref{#1})}

\def\sfrac#1#2{{\textstyle{#1\over #2}}}

 \def \1{\mathbbm{1}}
\def\wt {\widetilde}


\overfullrule=0pt

\begin{document}

 \title[The extremal process of two-speed BBM]
{The extremal process of \\ two-speed branching Brownian motion}
\author[A. Bovier]{Anton Bovier}
 \address{A. Bovier\\Institut f\"ur Angewandte Mathematik\\
Rheinische Friedrich-Wilhelms-Universität\\ Endenicher Allee 60\\ 53115 Bonn, Germany }
\email{bovier@uni-bonn.de}
\author[L. Hartung]{Lisa Hartung}
 \address{L. Hartung\\Institut f\"ur Angewandte Mathematik\\
Rheinische Friedrich-Wilhelms-Universität\\ Endenicher Allee 60\\ 53115 Bonn, Germany}
\email{lhartung@uni-bonn.de}

\subjclass[2000]{60J80, 60G70, 82B44} \keywords{branching Brownian motion, 
extremal processes, extreme values, F-KPP equation, cluster point processes} 

\date{\today}

 \begin{abstract} We construct and describe the extremal process for
 variable speed branching Brownian motion, studied recently by Fang and Zeitouni \cite{FZ_BM}, for the case of piecewise constant speeds; in fact for simplicity we concentrate on the case when the speed is $\s_1$ for $s\leq bt$ and $\s_2$ when $bt\leq s\leq t$. In the case $\s_1>\s_2$, the process is the concatenation of two BBM extremal processes, as expected. In the case $\s_1<\s_2$, a new family  of cluster point processes arises, that are similar, but distinctively different from the BBM process. 
 Our proofs follow the strategy of Arguin, Bovier, and Kistler in \cite{ABK_E}.

 \end{abstract}

\thanks{A.B. is partially supported through the German Research Foundation in 
the Collaborative Research Center 1060 "The Mathematics of Emergent Effects", 
the Priority Programme  1590 ``Probabilistic Structures in Evolution'',
the Hausdorff Center for Mathematics (HCM), and  the Cluster of Excellence ``ImmunoSensation'' at Bonn University.
L.H. is supported by the German Research Foundation in the Bonn International Graduate School in Mathematics (BIGS). 
}

 \maketitle

\section{Introduction}

A standard branching Brownian motion (BBM) is a continuous-time Markov branching process that is constructed as follows: start with a single particle which performs a standard Brownian motion $x(t)$ with $x(0)=0$ and continues for a standard exponentially distributed  holding time $T$,
independent of $x$. At time $T$, the particle splits independently of $x$ and $T$ into $k$ offspring with probability $p_k$, where $\sum_{i=1}^\infty p_k=1$,
$\sum_{k=1}^\infty k p_k=2$ and $K=\sum_{k=1}^\infty k(k-1)p_k<\infty$. These particles continue along independent Brownian paths starting from $x(T)$ and are subject to the same splitting rule.
And so on.

Branching Brownian motion has received a lot of attention over the last decades, with a strong focus on the properties of extremal particles. We mention the seminal contributions of McKean \cite{McKean}, Bramson, Lalley and Sellke, and Chauvin and Rouault \cite{B_M,B_C,LS, chauvin90} on the connection to the Fisher-Kolmogorov-Petrovsky-Piscounov (F-KPP) equation and on the distribution of the rescaled maximum. In recent years, their has been a revival of interest in BBM with numerous contributions, including the construction of the full extremal process \cite{ABK_E, abbs}.
For a review of these developments see, e.g., the recent survey by Gu\'er\'e \cite{gouere}.

BBM can be seen as a Gaussian process with covariances depending on an ultrametric 
distance, in this case the ultrametric associated to the genealogical structure of an 
underlying Galton-Watson process. In that respect it is closely related to another class of Gaussian processes, the Generalised Random Energy Models (GREM) introduced by Derrida and Gardner \cite{GD86b}.  While in BBM the covariance of the process is a linear function of the ultrametric distance, in the GREM one considers more general functions. One of the reasons that makes BBM interesting in this context is the fact that the linear function appears as a  borderline where the correlation starts to modify the behaviour of extremes \cite{BK1, BK2}. 

In the context of BBM, different covariances can be  achieved by varying the speed (i.e. diffusivity) of the Brownian motions as a function of time (see also \cite{BK2}). This model was introduced by Derrida and Spohn \cite{DerriSpohn88} and has  recently been investigated by 
Fang and Zeitouni \cite{FZ_BM, FZ_RW}, see also \cite{MZ,Mallein}. 
The entire family of models obtained as time changes of BBM is a 
splendid test ground to further develop the theory of extremes of 
correlated random variables. Understanding fully the possible extremal processes that arise in this class should also provide us with candidate processes for even wider classes of random structures. 

\subsection{Results}

In \cite{FZ_BM}, Fang and Zeitouni showed that in the case when the covariance is a piecewise linear function, the maximum of BBM is tight and behaves as expected from the analogous GREM. 
In this paper we refine and extend their analysis: we obtain the precise law of the maximum, and we give the full characterisation of the extremal process. 

For simplicity  we consider the following variable speed BBM. Fix a time $t$. Then we consider the BBM model where at time $s$, all particles move independently as Brownian motions with variance
\be\Eq(speed.1)
\s^2(s)=\left\{\begin{array}{ll}
    \s_1^2 &  \quad 0\leq s<bt \\
  \s_2^2         &  \quad  t \leq s \leq t
  \end{array}
\right. ,\quad  0<b\leq1.
\ee
We normalise the total variance by assuming
\be\Eq(norm.1)
\s_1^2b+\s_2^2(1-b)=1.
\ee
Note that in the case $b=1$, $\s_2=\infty$ is allowed.

We denote by $n(s)$ the number of particles at time $s$
 and by  $\{x_i(s);1\leq i \leq n(s)\}$ the positions of the particles at time $s$. 
\begin{remark} Strictly speaking, we are not talking about a single stochastic process, but about a family $\{x_k(s),k\leq n(s)\}_{s\leq t}^{t\in \R_+}$ of processes with finite time horizon, indexed by that horizon, $t$. 
\end{remark}

In this case, Fang and Zeitouni \cite{FZ_RW}  showed that
\be
\Eq(fz.1)
\max_{k\leq n(t)} x_k(t)=
\begin{cases} \sqrt 2 t -\frac{1}{2\sqrt 2} \log t +O(1),&\,\hbox{if} \,\,\s_1<\s_2,\\
 \sqrt{2} t (b\s_1+(1-b)\s_2)  - \frac{3}{2\sqrt 2} (\s_1+\s_2)   \log t+ O(1),&\,\text{if} \,\,\s_1>\s_2.
\end{cases}
\ee
The second case has a simple interpretation: the maximum is achieved by adding to the maxima of BBM at time $bt$ the maxima of their offspring at time $(1-b)t$ later. 
The first case looks simpler even, but is far more interesting. The order of the maximum 
is that of the REM, a fact to be expected by the corresponding results in the GREM (see
\cite {GD86b, BK1}). But what is the law of the rescaled maximum and what is the 
corresponding extremal process? The purpose of this paper is primarily to answer this question.

For standard BBM, $\bar x(t)$,  (i.e. $\s_1=\s_2$), 
Bramson \cite{B_M} and Lalley and Sellke \cite{LS} show that
\be\Eq(old.1)
\lim_{t\uparrow \infty} \P\left(\max_{k\leq n(t)}\bar x_k(t)-m(t) \leq y\right)
=\o(x) =\E e^{-CZe^{-\sqrt 2 y}},
\ee
where $m(t)\equiv \sqrt 2 t-\frac{3}{2\sqrt 2} \log t$, $Z$ is a random variable, the limit of the so called \emph{derivative martingale}, and $C$ is a constant.

In \cite{ABK_E} (see also 
\cite{abbs}  for a different proof) it was shown that the extremal process, 
\be 
\Eq(old.2)
\lim_{t\uparrow\infty} \wt\EE_t\equiv \lim_{t\uparrow\infty}\sum_{k=1}^{n(t)} \d_{\bar x_k(t)-m(t)}=\wt\EE,
\ee
exists in law, and $\wt\EE$ is of the form 
\be\Eq(old.3)
\wt\EE =\sum_{k,j}\d_{\eta_k+\D^{(k)}_j},
\ee
where $\eta_k$ is the $k$-th atom of a mixture of Poisson point process with intensity 
measure $CZ e^{-\sqrt 2 y}dy$, with $C$ and $Z$ as before, 
and
$\D^{(k)}_i$ are the atoms of independent and identically distributed  point processes $\D^{(k)}$, which are the limits in law
of 
\be
\sum_{j\leq n(t) }\d_{\tilde x_i(t)-\max_{j\leq n(t)}\tilde x_j(t)},
\Eq(old.4)
\ee 
where $\tilde x(t)$ is BBM conditioned on $\max_{j\leq n(t)} \tilde x_j(t)\geq \sqrt 2 t$.

The main result of the present paper is similar but different.

\begin{theorem} Let $x_k(t)$ be branching Brownian motion with variable speed  $\s^2(s)$ as given 
in \eqv(speed.1).  Assume that $\s_1<\s_2$. Then 
\begin{itemize} 
\item[(i)]
\be
\Eq(main.1)
\lim_{t\uparrow \infty} \P\left(\max_{k\leq n(t)} x_k(t)-\tilde m(t) \leq y\right)
=\E e^{-C' Y e^{-\sqrt 2 y}},
\ee 
where $\tilde m(t)=\sqrt 2 t-\frac{1}{2\sqrt 2} \log t$, $C'$ is a constant and $Y$ is a random variable that is the limit of a martingale
(but different from $Z$!). 
\item [(ii)] The point process 
\be\Eq(main.2)
\EE_t\equiv \sum_{k\leq n(t)} \d_{x_k(t)-\tilde m(t)}\rightarrow \EE,
\ee
as $t\uparrow \infty$, in law, where 
\be\Eq(main.3)
\EE =\sum_{k,j}\d_{\eta_k+\s_2\L^{(k)}_j},
\ee
where $\eta_k$ is the $k$-th atom of a mixture of Poisson point process with intensity 
measure $C'Y e^{-\sqrt 2 y}dy$, with $C'$ and $Y$ as in (i),  
and
$\L^{(k)}_i$ are the atoms of independent and identically distributed  point processes $\L^{(k)}$, which are the limits in law
of 
\be
\sum_{j\leq n(t) }\d_{\tilde x_i(t)-\max_{j\leq n(t)}\tilde x_j(t)},
\Eq(main.4)
\ee 
where $\tilde x(t)$ is BBM of speed $1$ conditioned on $\max_{j\leq n(t)} \tilde x_j(t)\geq \sqrt 2\s_2 t$.
\end{itemize}
\end{theorem}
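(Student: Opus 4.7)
I adapt the strategy of Arguin--Bovier--Kistler \cite{ABK_E} to the regime $\s_1<\s_2$, in which the extremal particles descend from ancestors that lie \emph{in the bulk}---not at the edge---of the first-phase BBM at time $bt$, and reach the top through an atypically large displacement during the second phase.

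\emph{Step 1: localization at time $bt$ and the martingale $Y$.} A many-to-one computation for an ancestor at $x_k(bt)=u=\sqrt 2\s_1^2 bt+z$ and target level $L=\tilde m(t)+y$, combined with a Gaussian expansion that uses the normalization \eqv(norm.1), produces a joint density (in $z$ and $y$) of expected first-phase ancestors and their contribution to level $L$ of the form
\[
\rho(z)\nu(y\mid z)\,dz\,dy=\frac{1}{2\pi\s_1\s_2\sqrt{b(1-b)t}}\,e^{-\sqrt 2 y}\,e^{-z^2/(2\s_1^2\s_2^2 b(1-b)t)}(1+o(1))\,dz\,dy,
\]
in which the Gaussian tail factors $e^{\pm\sqrt 2 z}$ coming from the two phases cancel, so that the dominant ancestors live in a $\sqrt t$-window about $\sqrt 2\s_1^2 bt$. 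A second-moment truncation (as in \S2 of \cite{ABK_E}) then confines all relevant ancestors to $\WW_t^A:=[\sqrt 2\s_1^2 bt-A\sqrt t,\ \sqrt 2\s_1^2 bt+A\sqrt t]$ with probability tending to $1$, first as $t\to\infty$ and then as $A\to\infty$. Because $\s_1<1$ (forced by $\s_1<\s_2$ and \eqv(norm.1)), the additive martingale
\[
Y_s:=e^{-s(1+\s_1^2)}\sum_{k\leq n(s)}e^{\sqrt 2\,x_k(s)}
\]
of the first-phase BBM is uniformly integrable with a.s.\ positive limit $Y$. Setting $z_k:=x_k(bt)-\sqrt 2\s_1^2 bt$, the identity $Y_{bt}=e^{-b(1-\s_1^2)t}\sum_k e^{\sqrt 2 z_k}$ together with a size-biased spine argument yields $e^{-b(1-\s_1^2)t}\sum_{k\in\WW_t^A}e^{\sqrt 2 z_k}\to Y$.

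\emph{Step 2: cluster decomposition.} Conditionally on $\cF_{bt}$, the descendants of distinct ancestors form independent BBMs of variance $\s_2^2$ run for time $(1-b)t$. To contribute to $\EE_t$, a second-phase family starting at $x_i(bt)$ must attain a displacement of order $\sqrt 2\s_2^2(1-b)t$, exceeding its typical maximum $\sqrt 2\s_2(1-b)t$ by a factor $\s_2>1$. After rescaling space by $\s_2^{-1}$, this is precisely the large-deviation conditioning ``$\max\geq\sqrt 2\s_2 T$'' of a standard-speed BBM over time $T=(1-b)t$, matching \eqv(main.4). An ABK-type cluster decomposition then gives that each recentered and $\s_2^{-1}$-scaled family converges to an independent copy of $\L^{(k)}$, jointly over $i\in\WW_t^A$ and independently of $\cF_{bt}$; rescaling back yields $\s_2\L^{(k)}$-shaped clusters.

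\emph{Step 3: Laplace functional.} For $\f\geq 0$ continuous of compact support, conditioning on $\cF_{bt}$ gives
\[
\E\bigl[e^{-\langle\EE_t,\f\rangle}\mid\cF_{bt}\bigr]=\prod_{i\leq n(bt)}\E\bigl[e^{-\sum_{j\in\mathrm{Desc}(i)}\f(x_j(t)-\tilde m(t))}\mid x_i(bt)\bigr].
\]
Step 2, combined with Bramson-type tail asymptotics for the second-phase maximum and Poisson approximation of crossing counts, gives for $i\in\WW_t^A$ the per-ancestor defect
\[
g_i\approx C'\,e^{-b(1-\s_1^2)t}\,e^{\sqrt 2 z_i}\!\int_{\R}\!\bigl(1-\mathbf E\,e^{-\langle\s_2\L,\f(\cdot+m)\rangle}\bigr)e^{-\sqrt 2 m}\,dm,
\]
for an explicit $C'=C'(\s_2,b)$. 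Using $\prod_i(1-g_i)\approx e^{-\sum_i g_i}$ and the Step 1 limit,
\[
\lim_{t\uparrow\infty}\E\,e^{-\langle\EE_t,\f\rangle}=\E\exp\!\Bigl\{-C'Y\!\int_{\R}\!\bigl(1-\mathbf E\,e^{-\langle\s_2\L,\f(\cdot+m)\rangle}\bigr)e^{-\sqrt 2 m}\,dm\Bigr\},
\]
the Laplace functional of the mixed-Poisson cluster process \eqv(main.3), proving (ii). Specialising to $\f=\lambda\1_{[y,\infty)}$ and letting $\lambda\to\infty$ yields (i).

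\emph{Main obstacle.} The delicate point is the cluster convergence of Step 2: one must show, \emph{uniformly} in $z_i\in[-A\sqrt t,A\sqrt t]$, that each second-phase family---conditioned on containing a particle near $\tilde m(t)$---converges to an independent copy of $\s_2\L$ whose law is independent of $\cF_{bt}$. This requires extending Bramson's KPP tail analysis \cite{B_M} to the large-deviation regime where the target displacement is $\s_2^2(1-b)\sqrt 2\,t$ rather than the typical $\s_2(1-b)\sqrt 2\,t$, together with a second-moment/truncation argument preventing two distinct ancestors from simultaneously contributing near the top---the analogue in this setting of the Lalley--Sellke decomposition. These extensions of the standard BBM toolkit to the atypical second-phase regime constitute the bulk of the technical work.
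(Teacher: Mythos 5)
Your outline follows the paper's strategy almost exactly: the localization of ancestors at time $bt$ in a $\sqrt t$-window around $\sqrt 2\s_1^2 bt$, the identification of the additive (McKean) martingale $Y_s=e^{-s(1+\s_1^2)}\sum_k e^{\sqrt 2 x_k(s)}$ as the relevant uniformly integrable martingale for $\s_1<1$, the extension of Bramson's F-KPP tail asymptotics to the large-deviation regime $x=at+o(t)$ with $a=\sqrt 2(\s_2-1)$, and the conditioning on $\FF_{bt}$ to factor the Laplace functional into per-ancestor defects. Your ``main obstacle'' paragraph correctly identifies the uniform F-KPP asymptotics as the heavy technical input (this is the paper's Proposition \thv(A.Prop1)).

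There is, however, one genuine gap in Step 3. When you expand the Gaussian exponent of the second-phase tail probability for an ancestor at $x_i(bt)=\sqrt 2\s_1^2 bt+z_i$, the per-ancestor defect is not $C'e^{-b(1-\s_1^2)t}e^{\sqrt 2 z_i}(\cdots)$ but carries an additional factor $\exp\bigl(-z_i^2/(2\s_2^2(1-b)t)\bigr)$, which is of order one (not $1+o(1)$) precisely on the window $|z_i|\leq A\sqrt t$ where all the mass lives. Consequently $\sum_i g_i$ is \emph{not} asymptotic to a constant times $Y_{bt}$, and your appeal to the martingale limit does not close. The paper resolves this by a second conditioning at the intermediate time $b\sqrt t$: one writes $x_i(bt)=x_i(b\sqrt t)+x^{(i)}_l(b(t-\sqrt t))$, integrates out the increment against its Gaussian law, and the resulting Gaussian integral both removes the quadratic correction and produces the extra factor $\s_2$ in the constant (so that the limit is $\E\exp(-\s_2 C(a)Ye^{-\sqrt 2 y})$, with the martingale now evaluated along $b\sqrt t\uparrow\infty$). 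This also requires the finer tube/localization estimates of Propositions \thv(Prop.p.4) and \thv(Prop.p.5) to control the path between times $b\sqrt t$ and $bt$, and a conditional second-moment bound to justify replacing $\prod(1-g_i)$ by $e^{-\sum g_i}$. A minor further point: your Step 2 asserts convergence of the conditioned clusters to a law independent of the conditioning level uniformly over the window; the paper derives this (Proposition \thv(Au.Prop3) and Corollary \thv(Au.Cor1)) from the same $\Psi(r,t,\cdot)$ bounds rather than from a separate decomposition, but your formulation of what is needed is correct.
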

To complete the picture, we give the result for the limiting extremal process in the case
$\s_1>\s_2$. This result is much simpler and totally unsurprising. 

\begin{theorem} \TH(speed.2) 
 Let $x_k(t)$ be as in Theorem \thv(speed.1), but $\s_2<\s_1$. 
 Let $\wt\EE\equiv \wt\EE^0$ and $\wt\EE^{(i)}, i\in \N$ be independent copies of the 
 extremal process \eqv(old.3) of standard branching Brownian motion. 
 Let 
 \be
 m(t)\equiv\sqrt{2} t (b\s_1+(1-b)\s_2)-\frac {3}{2\sqrt {2}} (\s_1+\s_2)\log t
 -\frac {3}{2\sqrt {2}}(\s_1 \log b+\s_2 \log(1-b)),
 \ee
  and set 
 \be\Eq(speed.3)
  \EE_t \equiv \sum_{k\leq n(t)} \d_{x_k(t)-m(t)}.
 \ee
 Then 
 \be\Eq(speed.4) 
 \lim_{t\uparrow\infty} \EE_t= \EE,
 \ee
 exists in law, and 
 \be\Eq(speed.5)
 \EE =\sum_{i,j} \d_{\s_1 e_i+\s_2 e^{(i)}_j},
 \ee
 where $e_i, e^{(i)}_j$ are the atoms of the point processes $\wt\EE$ and 
 $\wt \EE^{(i)}$, respectively.
 \end{theorem}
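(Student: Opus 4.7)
The plan is to prove convergence of Laplace functionals by conditioning on $\FF_{bt}$, thereby reducing the problem to two separate applications of the ABK theorem \eqv(old.2)--\eqv(old.3) for standard BBM. The guiding observation is that in the $\s_1>\s_2$ regime the problem splits cleanly at the deterministic transition time $bt$: rescaling the Brownian paths by a constant preserves the branching tree, so $\{x_k(bt)/\s_1\}_k$ is a standard BBM at time $bt$; and conditional on $\FF_{bt}$, the descendants of each ancestor $i$ form an independent BBM of speed $\s_2^2$ over time $(1-b)t$, spatially shifted by $x_i(bt)$. A direct computation with $m(s)=\sqrt 2 s-\frac{3}{2\sqrt 2}\log s$ verifies the algebraic identity $\s_1 m(bt)+\s_2 m((1-b)t)=m(t)$ matching \eqv(speed.3), so the two natural centerings sum to the announced one.

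\paragraph{Convergence of the Laplace functional.} Fix a test function $\phi\in C_c^+(\R)$ and write $L_t(\phi)=\E\exp(-\langle\EE_t,\phi\rangle)$. The branching Markov property at time $bt$, combined with the centering identity, gives
\be
L_t(\phi)=\E\Big[\prod_i F_t\bigl(x_i(bt)-\s_1 m(bt)\bigr)\Big],
\ee
where, for an auxiliary standard BBM $\bar x$,
\be
F_t(z)=\E\exp\Big(-\sum_k\phi\bigl(z+\s_2(\bar x_k((1-b)t)-m((1-b)t))\bigr)\Big).
\ee
Applying \eqv(old.3) to the inner, speed-$\s_2$ sub-BBM shows $F_t(z)\to F(z):=\E\exp(-\sum_j\phi(z+\s_2 e_j))$ pointwise in $z$, with $\{e_j\}$ the atoms of $\wt\EE$; applying it to the outer, speed-$\s_1$ era gives $\sum_i\d_{x_i(bt)-\s_1 m(bt)}\to\s_1\wt\EE^{(0)}=\sum_i\d_{\s_1 e_i}$ in law. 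Passing to the limit then yields
\be
L_t(\phi)\to\E\Big[\prod_i F(\s_1 e_i)\Big]=\E\exp\Big(-\sum_{i,j}\phi\bigl(\s_1 e_i+\s_2 e^{(i)}_j\bigr)\Big),
\ee
which is the Laplace functional of the process $\EE$ in \eqv(speed.5).

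\paragraph{Main obstacle.} The delicate step is justifying the interchange of these two limits: pointwise $F_t\to F$ is not enough, since the outer process has infinitely many atoms accumulating at $-\infty$ and its limiting intensity $\propto e^{-\sqrt 2 z}\d z$ must integrate against $-\log F_t$. Bramson's right-tail estimate $\P(\max_k\bar x_k(s)\geq m(s)+y)\leq C(1+y)e^{-\sqrt 2 y}$, applied to the speed-$\s_2$ sub-BBM, supplies the needed uniform envelope on $1-F_t$; since the normalisation \eqv(norm.1) forces $\s_2<1$ in the $\s_1>\s_2$ regime, the resulting decay of $1-F_t(z)$ as $z\to-\infty$ beats the growth of $e^{-\sqrt 2 z}$ and the integral converges. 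One also needs the complementary localisation statement that ancestors at time $bt$ below $\s_1 m(bt)-K$ can contribute atoms to $[m(t)-A,\infty)$ only with probability vanishing as $K\to\infty$, uniformly in $t$; this follows from the same Bramson bound combined with the $O(1)$ spread of the speed-$\s_2$ clusters above their conditional maxima. Once these tail bounds are in place, $L_t(\phi)\to L(\phi)$ follows from continuous mapping for Laplace functionals of mixed Poisson processes, and $\EE_t\to\EE$ in law by standard point-process theory.
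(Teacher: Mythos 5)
Your overall architecture — split at time $bt$, identify the outer process $\sum_i\d_{x_i(bt)-\s_1 m_1(bt)}$ with a rescaled copy of $\wt\EE$, identify each conditional cluster law with another copy, and verify the centering identity $\s_1m(bt)+\s_2m((1-b)t)=m(t)$ — is sound and is genuinely different from the paper's route. The paper never passes through weak convergence of the time-$bt$ point process: it runs the inner limit through the F-KPP machinery to produce an explicit function $h_{\d,D}(y)=\E[\exp(-C(\bar\phi,\d)Ze^{-\sqrt 2\frac{\s_1}{\s_2}y})]$, and then feeds $1-h_{\d,D}$ as \emph{initial datum} into a second F-KPP equation for the first era, invoking Bramson's Theorem A to get the outer limit in one analytic stroke (Eqs.\ \eqv(seltsam.4)--\eqv(seltsam.5)). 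That nested-F-KPP device is precisely what lets the paper avoid the uniformity questions your probabilistic route must confront head-on.

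The genuine gap is in your localisation step. You claim that ancestors with $x_i(bt)<\s_1 m(bt)-K$ contribute to $[m(t)-A,\infty)$ with probability vanishing as $K\to\infty$ uniformly in $t$, and that this "follows from the same Bramson bound." It does not follow from a first-moment computation with that bound. The mean measure of $\sum_i\d_{x_i(bt)-\s_1m(bt)}$ near level $-y$ is of order $t\,e^{\sqrt 2 y/\s_1}\,dy$ — the expected number of particles of standard BBM above $m(s)-y$ is of order $s\,e^{\sqrt 2 y}$, not $O(ye^{\sqrt 2y})$, because the expectation is dominated by paths that cross the line $\sqrt 2 s$ early. Integrating Bramson's cluster tail $\lesssim(1+y)e^{-\sqrt 2 y/\s_2}$ against this therefore yields a bound of order $t(1+K)e^{-\sqrt2 K(1/\s_2-1/\s_1)}$, which is \emph{not} small uniformly in $t$ for fixed $K$. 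Killing the spurious factor of $t$ requires the barrier/Brownian-bridge argument (the ancestral path up to time $bt$ must stay below a curve), which is exactly the content of Fang--Zeitouni's estimate that the paper imports verbatim as Lemma \thv(K.Lem1) rather than re-deriving. Your proof becomes correct if you cite that lemma (or reprove it with the barrier), but as written the justification offered for the decisive uniform-integrability step is one that fails. The same caveat applies to the claim that the tail integral $\int_{-\infty}^{-K}(1-F_t(z))\,\Xi_t(dz)$ is uniformly negligible: bounding it in expectation reintroduces the factor $t$, so one must either localise paths first or argue conditionally on $\FF_{bt}$ using the actual (not mean) behaviour of the time-$bt$ extremal process.
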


 \begin{remark} In the case $\s_1<1$, we see that the limiting process depends only on the values of $\s_1$ (through the martingale $Y$) and on $\s_2$ (through the processes of clusters $\s_2
 \L^{(k)}$). As $\s_2$ grows, the clusters become spread out, and in the limit 
 $\s_2=\infty$, the cluster processes degenerate to the Dirac mass at zero. Hence, in that case
 the extremal process is just a mixture of Poisson point processes. 
  When $\s_1=0$, and $b>0$, the martingale limit is just an exponential random variable, the limit of
 the martingale $n(t)e^{-t}$. The case $b=1$, $\s_1=0$ 
 corresponds to the random REM, where there is just a random number of iid random variables of variance one present.  
 \end{remark}
 
\begin{remark}  We have decided to write this paper only for the case of two speeds. 
 It is fairly straightforward to extend our results to the general case of piecewise constant speed with a fixed number of change points. 
 The details will be presented in a separate paper \cite{Hart14}. The general case of 
 variable speed  still offers more challenges, in spite of recent progress \cite{MZ,Mallein}.
 \end{remark}

\subsection{Outline of the proof}

The proof of our result follows the strategy used in \cite{ABK_E}.  The main difference is that we show that particles that will reach the level of the extremes at time $t$ must,  at the time of the speed change, $tb$, lie in a $\sqrt t$-neighbourhood of
 a value $\sqrt 2(\s_2-1)bt$ below the straight line of slope $\sqrt 2$. This is the done in Section 2. Then two pieces of information are needed: in Section 3 we get precise bounds on the probabilities of BBM to reach values at excessively large heights, and more generally we control the behaviour of solutions of the F-KPP equations very much ahead of the travelling wave front. The final results comes from combining this information with the precise distribution of particles at the time of the speed change. This is done in Section 4 by proving the convergence of a certain martingale, analogous, but distinct from the derivative martingale that appears in normal BBM. The identification and the proof of $L^1$ convergence of this martingale is the key idea. Using this information in Sections 5 and 6, the convergence of the maximums, respectively the Laplace functional of the extremal process are proven, much along the lines on \cite{ABK_E}. Section 7
 provides various characterisations of the limiting process, as in \cite{ABK_E}. 
 In particular, we describe the extremal process in terms of an auxiliary process, constructed from a Poisson point process with a strange intensity to those atoms we add 
 BBM's with negative drift. Interestingly, the process of the cluster extremes of this auxiliary process is again Poisson with random intensity driven by the new martingale.
 The results stated above follow then from looking at the clusters from their maximal points. 
 In the final Section 8, we give the simple proof of Theorem 
\thv(speed.2)

\noindent\textbf{Acknowledgements.} This paper uses many of the ideas from the long collaboration 
with Louis-Pierre Arguin and Nicola Kistler. We are very grateful for their input. A.B. thanks Ofer Zeitouni for discussions. Finally, we thank an anonymous referee for a very careful reading of our paper and for numerous valuable suggestions.

\section{Position of extremal particles at time $bt$}
The key to understanding the behaviour of the two speed BBM is to control  the positions of  particle at time $bt$ which are in the top at time $t$. This is done using Gaussian estimates.  

\begin{proposition}\TH(P.Prop1) Let $\s_1<\s_2$. For any $d\in \R$ and 
any  $\e>0$,  there exists a constant $A>0$ such that for all $t$ large enough
\be\Eq(p.1)
\P\left[\exists_{j\leq n(t)}\mbox{ \rm s.t. }x_j(t)>\tilde m(t)-d \mbox{ \rm  and }x_j(bt)-\sqrt{2}\s_1^2bt\not \in [ -A\sqrt{t}, A\sqrt{t}]\right]\leq \e.
\ee
\end{proposition}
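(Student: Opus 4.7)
The plan is to use a first-moment argument. By Markov's inequality, the probability in \eqv(p.1) is at most the expected number of particles $j\le n(t)$ satisfying the two conditions, and the many-to-one lemma turns this expectation into
\be
e^{t}\,\P\left[X(t)>\tilde m(t)-d,\;|X(bt)-\sqrt 2\s_1^2 bt|>A\sqrt t\right],
\ee
where $X$ is a single trajectory with the variance profile \eqv(speed.1), so that $X(bt)\sim\NN(0,\s_1^2 bt)$ and $X(t)-X(bt)\sim\NN(0,\s_2^2(1-b)t)$ are independent. The task is thus reduced to showing that this quantity tends to $0$ as $A\to\infty$, uniformly in $t$ large.

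Next, I would write the probability as a double integral over the positions $(\beta,x)=(X(bt),X(t))$ against the bivariate Gaussian density of $(X(bt),X(t)-X(bt))$, and shift variables $\beta=\sqrt 2\s_1^2 bt+\beta'$, $x=\tilde m(t)-d+u$ with $u>0$. Expanding the quadratic forms and using the normalisation $\s_1^2 b+\s_2^2(1-b)=1$, the factor $e^t$ cancels the leading-order Gaussian weight at the saddle point $(\sqrt 2\s_1^2 bt,\sqrt 2 t)$, while the correction $-\frac{\log t}{2\sqrt 2}$ in $\tilde m(t)$ produces an extra factor $\sqrt t$ which combines with the $1/t$ prefactor of the density. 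After this bookkeeping, the integrand takes the form
\be
\mathrm{const}\cdot t^{-1/2}\,e^{\sqrt 2 d}\,e^{-\sqrt 2 u}\,
\exp\!\Bigl(-\tfrac{(\beta')^2}{2\s_1^2 bt}-\tfrac{R^2}{2\s_2^2(1-b)t}\Bigr),
\ee
with $R=u-\beta'-\tfrac{\log t}{2\sqrt 2}-d$.

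From here the rest is routine. I would drop the last exponential using $e^{-R^2/(\cdots)}\le 1$, integrate $u$ over $[0,\infty)$ to obtain $1/\sqrt 2$, and apply a standard Gaussian tail bound $\int_{A\sqrt t}^{\infty}e^{-y^2/(2\s_1^2 bt)}\,dy\le (\s_1^2 b\sqrt t/A)\exp(-A^2/(2\s_1^2 b))$ (and symmetrically for $\beta'<-A\sqrt t$). The powers of $t$ cancel, leaving an estimate of the form $C(b,d,\s_1,\s_2)\,A^{-1}\exp(-A^2/(2\s_1^2 b))$, which falls below $\e$ once $A$ is chosen large, uniformly in $t$.

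The only conceptually substantive point, and in my view the main obstacle, is recognising and justifying the correct centring $\sqrt 2\s_1^2 bt$. This is the minimiser $\beta^\star(x)$ of $\beta\mapsto\beta^2/(2\s_1^2 bt)+(x-\beta)^2/(2\s_2^2(1-b)t)$; using $\s_1^2 b+\s_2^2(1-b)=1$ one finds $\beta^\star(x)=\s_1^2 b\,x$, so that at $x=\sqrt 2 t$ the optimal location is $\sqrt 2\s_1^2 bt$. This is noticeably \emph{below} the typical maximum $\sqrt 2\s_1 bt$ of speed-$\s_1$ BBM at time $bt$ (since $\s_1<1$), which is exactly the REM-like mechanism familiar from the GREM analysis in \cite{BK1}: particles that will contribute to the extremes of the two-speed process are not the top particles at the time of the speed change, but instead sit in a $\sqrt t$-window around the sub-maximal level $\sqrt 2\s_1^2 bt$.
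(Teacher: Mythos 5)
Your proposal is correct and follows essentially the same route as the paper: a first-moment (many-to-one) bound of the form $e^t\,\P[\,\cdot\,]$, followed by a Gaussian computation in which the normalisation $\s_1^2b+\s_2^2(1-b)=1$ cancels the factor $e^t$ at the centring $(\sqrt2\s_1^2bt,\sqrt2t)$ and leaves a Gaussian tail in the deviation of $X(bt)$ from $\sqrt2\s_1^2bt$, made small by taking $A$ large uniformly in $t$. The only difference is cosmetic: by completing the square and integrating out the terminal increment exactly, you avoid the paper's auxiliary split into the regions where $\sqrt{2t}-\s_1\sqrt b\,w_1$ is smaller or larger than $\log t$, which the paper needs only because it applies the Gaussian tail estimate to the conditional probability of the second leg.
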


\begin{proof}
Using a first order Chebyshev inequality we bound \eqv(p.1) by
\bea\Eq(p.3)
&&e^t\E\left[\1_{\left\{\s_1\sqrt{bt} w_1-\sqrt{2}\s_1^2bt\not \in[ -A\sqrt{t}, A\sqrt{t}] \right\}}
\P_{w_2}\left( \s_2\sqrt{(1-b)t} w_2> \tilde m(t)-d- \s_1\sqrt{bt} w_1
\right)\right]\nonumber\\
&=&e^t\E\Bigl[\1_{\{w_1-\sqrt{2}\s_1\sqrt{bt}\not\in[ -A', A']\}}\P_{w_2}\Bigl({w_2>\sfrac{\sqrt{2t}-\s_1\sqrt{b} w_1}{\s_2\sqrt{1-b}} -\sfrac{\log t}{2\sqrt 2\s_2\sqrt{(1-b)t} }-\sfrac{d}{\s_2\sqrt{(1-b)t}}}\Bigr)\Bigr]\nonumber\\&\equiv& (R1)+(R2),
\eea
where  $w_1,w_2$ are independent $\mathcal{N}(0,1)$-distributed, $A'=\frac{1}{\sqrt{b}\s_1}A$, $\P_{w_2}$ denotes the law of the variable $w_2$.
Introducing into the last line the identity in the form 
\be
1=\1_{\{\sqrt{2t}-\s_1\sqrt{b} w_1<\log t\}}+\1_{\{\sqrt{2t}-\s_1\sqrt{b} w_1\geq\log t\}}
\ee
 we can write it as 
$(R1)+(R2)$.

We first show $\lim_{t\to\infty}(R1)=0$. Using the standard Gaussian tail estimate
\be\Eq(sgt.1)
\int_u^\infty e^{-x^2/2}dx \leq u^{-1}e^{-u^2/2},
\ee
 $(R1)$ is bounded from above by
\be
e^t\P\left[\sqrt{2t}-\s_1\sqrt{b} w_1<\log t\right]
\leq e^{t(1-1/b\s_1^2) +t^{1/2}\log t/b\s_1^2}   
\to 0\quad\mbox{as }t\to\infty.
\ee
For (R2) we can use  again \eqv(sgt.1) to show
that (R2) is smaller than
\bea\Eq(p.4)
&&e^t(2\pi)^{-1}\int_{{w -\sqrt{2}\s_1\sqrt{bt}\not\in[ -A', +A']}\atop{\sqrt{2t}-\s_1\sqrt{b} w_1\geq\log t}}
\frac{e^{-w^2/2}}{\sfrac{\sqrt{2t}}{\sqrt{1-b}\s_2} -\sfrac{\s_1\sqrt{b}}{\s_2\sqrt{1-b}}w}
\\\nonumber &&\times \exp\left(-\sfrac 12\left(\sfrac{\sqrt{2t}-\s_1\sqrt bw -\log t/(2\sqrt 2\sqrt t)-d/\sqrt t}{\sqrt{1-b}\s_2}\right)^2\right)\mbox{d}w.
\eea
We change  variables $w=\sqrt{2}\s_1\sqrt{bt}+z$. Then the integral in \eqv(p.4)
can be bounded from above by
\be\Eq(p.5)
\frac M{\sqrt{2\pi \s_2^2(1-b)}}\int_{z\not\in[-A',A']}   e^{-\frac{z^2}{2\s_2^2(1-b)}}\mbox{d}z,
\ee
where $M$ is some positive constant. \eqv(p.5) can be made as small as desired by taking $A$ (and thus $A'$) sufficiently large.
\end{proof}

\begin{remark} The point here is that since $\s_1^2<\s_1$, these particles are way below \\
$\max_{k\leq n(bt)}x_k(bt)$, which is near $\sqrt 2 \s_1 bt$. The offspring of these particles that want to be top at time will have to race much faster (at speed $\sqrt 2 \s_2^2$, rather than just $\sqrt 2 \s_2$) than normal. Fortunately, there are lots of 
particles to choose from. We will have to control precisely how many.
\end{remark}
We need a slightly finer control on the path of the extremal particle until the time of speed change. To this end 
we  define two sets on the space of paths, $X:\R_+\rightarrow \R$, The first  controls that the position of the path  is in a certain tube up to time $s$ and the second  the position of the particle at time $s$.
\bea\Eq(D.11)
\TT_{s,r}&=&\bigl\{X \big \vert  \forall_{0\leq q\leq s} | X(q)- \sfrac qsX(s)|\leq ((q\wedge (s-q))\vee r)^\g\bigr\}
\nonumber\\
\GG_{s,A,\g}&=&\bigl\{X\big \vert X(s)-\sqrt{2}\s_1^2s\in [-A  s^\g,+As^\g]\bigr\}
\eea
Recall \cite{B_M} that the ancestral path form $0$ to $x_k(s)$ can 
be written as 
$x_k(q)= \frac qsx_k(s) + \mathfrak z_k(s)$, where $\mathfrak z_k$ is a Brownian bridge from $0$ to $0$ in time $s$, independent of $x_k(s)$.
We need the following simple fact about Brownian bridges.

\begin{lemma}\TH(simplefact)
 Let $\mathfrak z(q)$ be a Brownian bridge starting in zero and ending in zero at time $s$.  Then for all $\g>1/2$, the following is true. For all $\e>0$ there exists $r$ such that
\be
\lim_{s\uparrow\infty}\P\left(\vert \mathfrak z(q) \vert < ((q\wedge (s-q))\vee r)^\g, \forall \,0\leq q\leq s \right)>1-\e.
\ee
\end{lemma}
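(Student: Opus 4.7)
The plan is to exploit the time-reversal symmetry of the Brownian bridge together with a dyadic chaining argument, reducing everything to classical Gaussian tail estimates. Since $\mathfrak z(s-\cdot)$ is again a Brownian bridge from $0$ to $0$ on $[0,s]$, and since the envelope $((q\wedge(s-q))\vee r)^\g$ is invariant under $q \mapsto s-q$, a union bound halves the problem: it suffices to bound the failure probability on the half-interval $[0, s/2]$ (where the envelope simplifies to $(q\vee r)^\g$) and then multiply by two.

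On that half-interval I would use the representation $\mathfrak z(q) = B(q) - (q/s) B(s)$, with $B$ a standard Brownian motion, so that $|\mathfrak z(q)| \leq |B(q)| + (q/s)|B(s)|$. On the high-probability event $\{|B(s)| \leq s^{1/2+\d}\}$ with $0 < \d < \g - 1/2$, an easy calculation gives $(q/s)|B(s)| \leq \tfrac{1}{2}(q\vee r)^\g$ uniformly for $q \in [0, s/2]$ once $s$ is large; this is precisely where the hypothesis $\g > 1/2$ enters, since the correction scales like $q/\sqrt s$ while the envelope grows at least like $q^\g$. Hence, up to a probability $o_s(1)$, the failure event for $\mathfrak z$ implies the same event for $|B(q)|$ against the halved envelope $\tfrac{1}{2}(q\vee r)^\g$.

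The remaining step is a standard dyadic bound for the driving Brownian motion. Write $[0, s/2] = [0, r] \cup \bigcup_{k=0}^{K} I_k$ with $I_k = [r 2^k, r 2^{k+1}]$ and $K = \lceil \log_2(s/(2r))\rceil$. By the reflection principle,
\begin{equation}
\P\!\left( \sup_{q\in I_k} |B(q)| > \tfrac{1}{2} (r 2^k)^\g \right) \leq 4 \exp\!\left(- (r 2^k)^{2\g-1}/16 \right),
\end{equation}
and an analogous estimate holds on $[0, r]$. Since $2\g-1 > 0$, the right-hand sides form a super-geometrically convergent series whose total sum is bounded by $C \exp(-c r^{2\g-1})$. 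Doubling for the time-reversal step and absorbing the $o_s(1)$ contribution, the total failure probability is at most $C' \exp(-c r^{2\g-1}) + o_s(1)$; fixing $r$ large enough and then letting $s \to \infty$ makes this smaller than $\e$. The only delicate point is the uniform control of the low-frequency correction $(q/s)B(s)$, which is what forces the threshold $\g > 1/2$; once that is settled, the rest is a routine reflection-principle computation.
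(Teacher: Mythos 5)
Your argument is correct and complete. Note that the paper states this lemma without proof (it is presented as a ``simple fact''), so there is no argument of the authors to compare against; what you give is the standard route and it works. The three ingredients all check out: the symmetry reduction to $[0,s/2]$ is legitimate because $\mathfrak z(s-\cdot)$ is again a bridge and the envelope is invariant under $q\mapsto s-q$; the decomposition $\mathfrak z(q)=B(q)-(q/s)B(s)$ correctly isolates the low-frequency term, and your event $\{|B(s)|\leq s^{1/2+\d}\}$ with $\d<\g-1/2$ does make $(q/s)|B(s)|\leq\frac12(q\vee r)^\g$ uniformly on $[0,s/2]$ for large $s$ (this is indeed exactly where $\g>1/2$ is used); and the dyadic reflection-principle bound gives a summable tail with total mass $C\exp(-cr^{2\g-1})$, so choosing $r$ large and then letting $s\to\infty$ yields the claim. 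One cosmetic point: when $\g\geq 1$ your choice $\d<\g-1/2$ no longer forces $\d<1/2$, which you implicitly need for $q s^{-1/2+\d}$ to be small; either take $\d=\min\{1/4,(\g-1/2)/2\}$ or observe that (for $r\geq 1$) the envelope is monotone in $\g$, so it suffices to treat $\g\in(1/2,1]$. This is a trivial adjustment and does not affect the validity of the proof.
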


\begin{proposition}\TH(Prop.p.4)
 Let $\s_1<\s_2$. For any $d\in \R,A>0$ ,  $\g>\frac{1}{2}$ and 
any  $\e>0$,  there exists constants $B>0$ such that, for all $t$ large enough,
\be\Eq(pp.0)
\P\left[\exists_{j\leq n(t)}: x_j(t)>\tilde m(t)-d \land x_j\in \GG_{bt,A,\frac{1}{2}} \land x_j\not\in \GG_{b\sqrt t,B,\g} \right]\leq \e.
\ee
\end{proposition}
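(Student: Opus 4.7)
My plan is to follow the Chebyshev strategy of the proof of Proposition \thv(P.Prop1) but to insert an additional conditioning at the intermediate time $b\sqrt t$. First I would bound the probability on the left-hand side of \eqv(pp.0) by $e^t$ times the probability that a single Brownian particle with the variance profile \eqv(speed.1) simultaneously satisfies the three constraints $x(t) > \tilde m(t) - d$, $x(bt) - \sqrt 2\,\s_1^2 bt \in [-A\sqrt t, A\sqrt t]$, and $x(b\sqrt t) - \sqrt 2\,\s_1^2 b\sqrt t \notin [-B(b\sqrt t)^\g, B(b\sqrt t)^\g]$.

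The crucial observation is that, conditional on $x(bt) = w_1$, the increment $x(t) - x(bt)$ is independent of $x(b\sqrt t)$ (a functional of the path on $[0,bt]$), so the constraint on $x(t)$ and the constraint on $x(b\sqrt t)$ become conditionally independent. By the Brownian bridge representation on $[0, bt]$, the law of $x(b\sqrt t)$ given $x(bt)=w_1$ is Gaussian with mean $w_1/\sqrt t$ and variance $\s_1^2 b(\sqrt t - 1)$. This factorises the single-particle probability as an integral over $w_1 \in [\sqrt 2\,\s_1^2 bt \pm A\sqrt t]$ of the product of the Gaussian density of $x(bt)$, the conditional upper-tail probability of $x(t)$, and the conditional failure probability of $x(b\sqrt t)$.

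For every admissible $w_1$ the conditional mean $w_1/\sqrt t$ of $x(b\sqrt t)$ lies within $A$ of $\sqrt 2\,\s_1^2 b\sqrt t$, while the forbidden tube has radius $B(b\sqrt t)^\g$, much larger than the conditional standard deviation $\s_1\sqrt{b}\,t^{1/4}$ because $\g>1/2$. A direct application of the Gaussian tail estimate \eqv(sgt.1) would then produce, uniformly in $w_1$, a conditional failure probability of order $\exp(-c B^2 t^{\g-1/2})$ for some $c>0$. Pulling this supremum out of the $w_1$-integral reduces the task to bounding $e^t \P[x(t)>\tilde m(t)-d,\, x(bt) \in \GG_{bt,A,1/2}]$, but this is exactly the quantity bounded by a constant $C(A,d)$ in the proof of Proposition \thv(P.Prop1), thanks to the normalisation \eqv(norm.1).

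Combining the two estimates would give $C(A,d)\exp(-c B^2 t^{\g-1/2})$ on the left-hand side of \eqv(pp.0), vanishing as $t \uparrow \infty$ for any fixed $B>0$. I do not anticipate any serious obstacle: the argument is a one-step refinement of Proposition \thv(P.Prop1), and the hypothesis $\g>1/2$ is precisely what is required for the bridge fluctuations at time $b\sqrt t$ (of order $t^{1/4}$) to be dominated by the allowed tube radius $(b\sqrt t)^\g \sim t^{\g/2}$, leaving room to absorb the shift of size $O(A)$ inherited from the constraint at time $bt$. Note that, unlike Lemma \thv(simplefact), no uniform tube bound is needed here because only the value of the path at the single point $b\sqrt t$ is constrained.
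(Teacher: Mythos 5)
Your argument is correct, and it reaches the conclusion by a genuinely different route than the paper. The paper's proof does not compute anything new at time $b\sqrt t$: it observes that, on $\GG_{bt,A,\frac12}$ and for $B$ large, the event $\{x_j\not\in\GG_{b\sqrt t,B,\g}\}$ forces the ancestral path to leave the tube $\TT_{bt,r}$ (since $\frac{b\sqrt t}{bt}x_j(bt)$ is within $O(A)$ of $\sqrt 2\s_1^2 b\sqrt t$ while the tube allows only a deviation $(b\sqrt t)^\g$ at that time), and then combines the first-moment bound of Proposition \thv(P.Prop1) with the Brownian-bridge tube estimate of Lemma \thv(simplefact), using the independence of the bridge from its endpoint. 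You instead condition on $x(bt)$ and exploit the conditional independence of $x(b\sqrt t)$ and $x(t)-x(bt)$, replacing the uniform tube control by a single Gaussian tail estimate at time $b\sqrt t$; the computation of the conditional mean $w_1/\sqrt t$ and variance $\s_1^2 b(\sqrt t-1)$ is correct, and the restriction to $\GG_{bt,A,\frac12}$ keeps the $w_1$-integral in the regime where the argument of Proposition \thv(P.Prop1) gives a bound $C(A,d)$ on the constrained first moment. What your version buys is a quantitative bound $C(A,d)\exp(-cB^2t^{\g-1/2})$ that vanishes as $t\uparrow\infty$ for \emph{every} fixed $B>0$, with no dependence on the auxiliary parameter $r$ and no appeal to Lemma \thv(simplefact); what the paper's version buys is reusability, since the same tube event $\TT$ and Lemma \thv(simplefact) are needed anyway for Proposition \thv(Prop.p.5) and for the truncation of the McKean martingale, so the authors get Proposition \thv(Prop.p.4) essentially for free from machinery already in place.
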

\begin{proof}
 For $B$ and $t$ sufficiently large the probability in \eqv(pp.0) is bounded from above by 
 \be\Eq(pp.2)
 \P\left[\exists_{j\leq n(t)}: x_j(t)>\tilde m(t)-d \land x_j\in \GG_{bt,A,\frac{1}{2}} \land x_j\not\in\TT_{bt,r} \right]
 \ee
 Let $w_1$ and $w_2$ be independent $\NN(0,1)$-distributed 
 random variables and $\mathfrak z$ a Brownian bridge starting in zero and ending in zero at time $bt$. Using a first moment method as in the proof of Proposition \thv(P.Prop1) together with the independence of the Brownian bridge from its endpoint, one sees that  \eqv(pp.2) is bounded from above by 
 \bea\Eq(pp.3)
&& e^t\E\left[\1_{\left\{\s_1\sqrt{bt} w_1-\sqrt{2}\s_1^2bt  \in[ -A\sqrt{t}, A\sqrt{t}] \right\}}
\P_{w_2}\left( \s_2\sqrt{(1-b)t} w_2> \tilde m(t)-d- \s_1\sqrt{bt} w_1
\right)\right]\nonumber
\\
&&\times\P\left[\mathfrak z\not\in\TT_{bt,r}\right]<\e,
 \eea
 where the last bound follows from 
 Lemma \thv(simplefact)  (with $\e$ replaced by $\e/M$) and the bound \eqv(p.5) obtained in the proof of Proposition \thv(P.Prop1).
\end{proof}
\begin{proposition}\TH(Prop.p.5)
 Let $\s_1<\s_2$. For any $d\in \R,A,B>0$, $\g>\frac{1}{2}$ and 
any  $\e>0$,  there exists a constant $r>0$ such that for all $t$ large enough
\bea\Eq(pp.1)
&&\P\Bigl[\exists_{j\leq n(t)}: x_j(t)>\tilde m(t)-d \land x_j\in \GG_{bt,A,\frac{1}{2}} \cap\GG_{b\sqrt t,B,\g}\nonumber
\\&&\qquad\land x_j(b\sqrt t+\cdot)-x_j(b\sqrt t)\not\in\TT_{b(t-\sqrt t),r}\Bigr]\leq \e.
\eea
\end{proposition}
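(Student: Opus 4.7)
My plan is to mimic very closely the two-step argument just used in Proposition \thv(Prop.p.4), only this time applied to the piece of the ancestral path on the interval $[b\sqrt t, bt]$ rather than on the initial interval $[0,bt]$. The first step is a first moment bound: by a Chebyshev/many-to-one estimate of the type used in \eqv(p.3) and \eqv(pp.3), the probability in \eqv(pp.1) is bounded by $e^t$ times the probability that a single time-changed Brownian motion $x(\cdot)$ with variance profile \eqv(speed.1) simultaneously satisfies the four events: (i) $x(t)>\tilde m(t)-d$, (ii) $x(bt)-\sqrt 2\s_1^2 bt\in[-A\sqrt t,A\sqrt t]$, (iii) $x(b\sqrt t)-\sqrt 2\s_1^2 b\sqrt t\in[-B(b\sqrt t)^{\g},B(b\sqrt t)^{\g}]$, and (iv) the shifted path $x(b\sqrt t+\cdot)-x(b\sqrt t)$ does not lie in $\TT_{b(t-\sqrt t),r}$.

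The second step is to factorise this single-particle probability via the standard Gaussian decomposition. On $[b\sqrt t,bt]$, write
\be
x(b\sqrt t+q)-x(b\sqrt t)=\tfrac{q}{b(t-\sqrt t)}\bigl(x(bt)-x(b\sqrt t)\bigr)+\s_1\,\mathfrak z(q),\qquad 0\leq q\leq b(t-\sqrt t),
\ee
where $\mathfrak z$ is a standard Brownian bridge of duration $b(t-\sqrt t)$, independent (by the Markov property and Gaussian independence of bridge and endpoints) of the triple $\bigl(x(b\sqrt t),x(bt),x(t)\bigr)$, and hence of the events (i)--(iii). Condition (iv) is a statement solely about the bridge part, so the joint probability factorises as
\be
\P\bigl[(\mathrm i)\land(\mathrm{ii})\land(\mathrm{iii})\bigr]\cdot\P\bigl[\s_1\mathfrak z\notin\TT_{b(t-\sqrt t),r}\bigr].
\ee

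For the first factor, dropping the constraint (iii) only enlarges the probability, and the calculation carried out in \eqv(p.3)--\eqv(p.5) shows that $e^t\,\P[(\mathrm i)\land(\mathrm{ii})]\leq M$ for some constant $M=M(A,d)$ independent of $t$. For the second factor, Lemma \thv(simplefact) (applied with tolerance $\e/M$, and with the harmless $\s_1$ absorbed into the choice of $r$) guarantees that
\be
\P\bigl[\s_1\mathfrak z\notin\TT_{b(t-\sqrt t),r}\bigr]\leq \e/M
\ee
for all sufficiently large $t$, provided $r$ is chosen large enough. Multiplying the two estimates yields the desired bound $\e$.

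The only point that requires genuine care is the factorisation in step two: one must check that the bridge piece $\mathfrak z$ on $[b\sqrt t,bt]$ is independent not only of the two endpoints $x(b\sqrt t),x(bt)$ but also of the subsequent increment $x(t)-x(bt)$ on $[bt,t]$ (where the speed is $\s_2$), so that event (i) can be absorbed into the endpoint factor. This is immediate from the Markov property, but it is the one spot where the argument is not literally copy-and-paste from Proposition \thv(Prop.p.4); the rest of the proof is essentially the combination of the Gaussian estimate \eqv(p.5) with the Brownian bridge tail bound of Lemma \thv(simplefact), exactly as before.
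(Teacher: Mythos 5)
Your proposal is correct and follows essentially the same route as the paper, whose proof of this proposition is literally the one sentence ``essentially identical to the proof of Proposition \thv(Prop.p.4)'': a many-to-one first-moment bound, the independence of the Brownian bridge on $[b\sqrt t,bt]$ from the endpoint data $(x(b\sqrt t),x(bt),x(t))$, the uniform bound $e^t\,\P[(\mathrm i)\wedge(\mathrm{ii})]\leq M$ coming from the computation in \eqv(p.3)--\eqv(p.5), and Lemma \thv(simplefact) applied with tolerance $\e/M$. Your explicit attention to the factorisation (including the increment on $[bt,t]$) and to absorbing the factor $\s_1$ into the choice of $r$ supplies details the paper leaves implicit.
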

\begin{proof}
The proof of this proposition is essentially identical to the proof of Proposition \thv(Prop.p.4).
\end{proof}


\section{Asymptotic behaviour of BBM}
Let $\tilde x(t)$ denote a standard BBM. We are interested in the asymptotic behavior of 
\be
\P\left[\max_{1\leq i\leq n(t)} \bar {x}_i(t)>x+\sqrt{2}t\right]
\ee
for $x=at+b\sqrt{t}$, $a\in\R_+,b\in \R$.
 Recall that 
$\P\left(\max_{k\leq n(t)}\bar x_k(t)>x\right)$ is the solution of the 
F-KPP equation 
\be\Eq(fkpp)
\del_t u(t,x)=\frac 12 \del_x^2u(t,x) +(1-u(t,x))-\sum_{k=1}^\infty p_k (1-u(t,x))^k.
\ee
with initial condition $u(0,x)=\1_{x<0}$. We are more generally interested in the behaviour of solutions for such large values of $x$.
The following proposition is an extension of Lemma 4.5 in \cite{ABK_E} for these values of $x$.

\begin{proposition}\TH(A.Prop1)
Let $u$ be a solution to the F-KPP equation with initial data satisfying \\
(i) $0\leq u(0,x)\leq 1$;\\
(ii) for some $h>0$, $\limsup_{t\to\infty}\frac{1}{t}\log\int_t^{t(1+h)}u(0,y)\mbox{d}y\leq-\sqrt{2}$;\\
(iii) for some $v>0,\,M>0,\,N>0$, it holds that $\int_{x}^{x+N}u(0,y)\mbox{d}y>v$ for all $x\leq -M$;\\
(iv) moreover, $\int_0^\infty u(0,y)ye^{2y}\mbox{d}y<\infty$.\\
Then we have for $x=at+o(t)$
\be
\lim_{t\to\infty}e^{\sqrt{2}x}e^{x^2/2t}t^{1/2}u(t,x+\sqrt{2}t)=C(a),
\ee
where $C(a)$ is a strictly positive constant. The convergence is uniform for $a$ in  compact intervals.
\end{proposition}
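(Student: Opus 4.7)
The plan is to extend Lemma 4.5 of \cite{ABK_E} to the regime $x=at+o(t)$ by keeping the large-deviation Gaussian factor $e^{-x^2/(2t)}$ explicit throughout. Since $x$ sits well above the typical maximum of BBM, $u(t,x+\sqrt 2 t)$ must be exponentially small, so the nonlinearity in the F-KPP equation should be negligible and the answer should come from the linearization of \eqv(fkpp) at $u\equiv 0$ (equivalently, from a first-moment BBM computation), subject to a matching second-moment control.

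For the upper bound I would start from the McKean representation $u(t,y)=1-\E\bigl[\prod_{k\le n(t)}(1-u(0,y-x_k(t)))\bigr]$ and apply the elementary inequality $1-\prod_k(1-u_k)\le\sum_k u_k$ (valid by assumption (i)) combined with the many-to-one lemma to bound $u(t,x+\sqrt 2 t)$ by $(2\pi t)^{-1/2}\, e^t\!\int u(0,w)\,e^{-(x+\sqrt 2 t-w)^2/(2t)}\,\d w$. Expanding the square as $t+\sqrt 2 x + x^2/(2t) - (\sqrt 2+x/t)w + w^2/(2t)$ cancels the $e^t$ factor and produces the claimed Gaussian factor $e^{-\sqrt 2 x-x^2/(2t)} t^{-1/2}$ explicitly, leaving a residual integral $\int u(0,w)\,e^{(\sqrt 2+x/t)w-w^2/(2t)}\,\d w$. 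Assumption (iv) provides an integrable majorant uniformly in $x/t$ on compact subsets of $[0,\infty)$, so dominated convergence identifies the candidate limit $C(a)=(2\pi)^{-1/2}\!\int u(0,w)\,e^{(\sqrt 2+a)w}\,\d w$; strict positivity follows from (iii), and the uniformity of the convergence on compact intervals in $a$ is automatic from the dominating majorant being $a$-independent.

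The matching lower bound is the harder half. Here I would follow the sub/super-solution strategy of \cite{ABK_E} after Bramson: truncate the equation (or equivalently kill BBM particles above a carefully chosen affine barrier) to reduce to a genuinely linear problem for which the first-moment computation is exact, then compare $u$ to this truncated solution by the maximum principle using (i) and (ii). The key missing estimate is then a second-moment bound on pairs of particles having a common ancestor at time $s\in(0,t)$: via the many-to-two formula this contribution is an integral in $s$ and in the branching-point position, and it must be shown to be of lower order than the desired $t^{-1/2}e^{-\sqrt 2 x-x^2/(2t)}$. This is the principal obstacle, because naively the two-particle integral produces something comparable to the \emph{square} of the first moment; the resolution is the Bramson-type cut-off restricting ancestral paths below an affine barrier (as used already in Propositions \thv(Prop.p.4)--\thv(Prop.p.5)), which forces the two branches to separate sufficiently for their joint Gaussian weight to become genuinely subdominant. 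Once the second moment is under control, matching it against the first moment from the previous paragraph closes the argument with the same constant $C(a)$, uniformly on compact intervals by the same dominated-convergence step.
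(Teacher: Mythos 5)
Your first-moment upper bound is correct as an inequality, but the constant it produces is not $C(a)$, and this is where the argument breaks. The many-to-one computation identifies the candidate $(2\pi)^{-1/2}\int u(0,w)e^{(\sqrt 2+a)w}\mbox{d}w$, which over-counts by the expected cluster size of particles above the level: even in this large-deviation regime, a particle that reaches $x+\sqrt 2 t$ drags along an $O(1)$ cloud of relatives from recent branchings, so $\E[N^2]/\E[N]$ tends to a constant strictly larger than $1$ and $\P(N\ge 1)/\E[N]$ to a constant strictly smaller than $1$ (this is exactly the Chauvin--Rouault phenomenon). No barrier truncation fixes this, because the offending pair correlations come from common ancestors at times $t-s$ with $s=O(1)$, which the affine barrier does not see; a truncated Paley--Zygmund bound therefore yields the correct \emph{order} $t^{-1/2}e^{-\sqrt 2x-x^2/2t}$ but a constant bounded away from your candidate, so the proposed ``matching'' of the two moments with the same constant cannot succeed. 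A secondary problem: under (ii) and (iv) the initial datum is only controlled at the level of $e^{-\sqrt 2 y}$, resp.\ $y^{-1}e^{-2y}$, decay, so for $a>2-\sqrt 2$ your candidate integral need not even be finite, while $C(a)$ is.

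The paper's route avoids expressing the constant through $u(0,\cdot)$ altogether. The key input is Proposition 4.3 of \cite{ABK_E} (Bramson's representation), which holds under (i)--(iv) and gives, for $r$ large, $t>8r$ and $x>8r-\frac{3}{2\sqrt 2}\log t$, the sandwich $\gamma(r)^{-1}\Psi(r,t,x+\sqrt 2t)\le u(t,x+\sqrt 2t)\le\gamma(r)\Psi(r,t,x+\sqrt 2t)$ with $\gamma(r)\to1$, where $\Psi$ is defined in \eqv(A.1): the solution at time $r$ propagated by the Gaussian kernel together with the Brownian-bridge factor $1-e^{-2y(x+\frac{3}{2\sqrt 2}\log t)/(t-r)}$. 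For $x=at+o(t)$ a dominated-convergence computation (Lemma \thv(fz.6)) gives $e^{\sqrt 2x}e^{x^2/2t}t^{1/2}\Psi(r,t,x+\sqrt 2t)\to C(r,a)=\frac{1}{\sqrt{2\pi}}\int_0^\infty e^{-a^2r/2}u(r,y+\sqrt 2r)e^{(\sqrt 2+a)y}(1-e^{-2ay})\mbox{d}y$, and the sandwich then forces $\lim_{r\to\infty}C(r,a)=C(a)$ to exist, to be the claimed limit, and to be positive and finite. Note that $C(a)$ is built from $u(r,\cdot)$ for large $r$, not from $u(0,\cdot)$: this is precisely the cluster correction your sketch loses. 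If you insist on a two-moment proof you would have to decompose the second moment over the splitting time of the last common ancestor and identify the limiting ratio exactly, which amounts to reproving Bramson's estimate; the efficient course is to quote it.
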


Define for $r>0$ the function $\Psi(r,t,x+\sqrt{2}t)$ by
\bea\Eq(A.1)
&&\Psi(r,t,x+\sqrt{2}t)
=\\\nonumber
&&\frac{e^{-\sqrt{2}x}}{\sqrt{2\pi(t-r)}}\int_0^\infty u(r,y+\sqrt{2}r)e^{\sqrt{2}y}e^{-\frac{(y-x)^2}{2(t-r)}}\left[1-e^{-2y\left(\frac{x+\frac{3}{2\sqrt{2}}\log t}{t-r}\right)}\right]\mbox{d}y.
\eea
\begin{lemma}\TH(fz.6) For $x=at+o(t)$ we have, under the assumptions of Proposition \thv(A.Prop1), 
\begin{eqnarray}
&&\lim_{t\to\infty}e^{\sqrt{2}x}e^{x^2/2t}t^{1/2}\Psi(r,t,x+\sqrt{2}t)
\\\nonumber
&&=\frac{1}{\sqrt{2\pi}}\int_0^\infty e^{-a^2r/2} u(r,y+\sqrt{2}r)e^{(\sqrt{2}+a)y}\left(1-e^{-2ay}\right)\mbox{d}y\equiv C(r,a).
\end{eqnarray}
 The convergence is uniform for $a$ in a compact set.
\end{lemma}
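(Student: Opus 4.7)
The plan is to substitute $\Psi$'s definition in, cancel the $e^{\sqrt 2 x}$ against the $e^{-\sqrt 2 x}$ appearing inside $\Psi$, observe $\sqrt{t/(t-r)}\to 1$, and thus reduce the problem to finding the $t\to\infty$ limit of
$$\frac{1}{\sqrt{2\pi}}\int_0^\infty u(r,y+\sqrt 2 r)\, e^{\sqrt 2 y}\, \exp\!\Bigl(\tfrac{x^2}{2t}-\tfrac{(y-x)^2}{2(t-r)}\Bigr)\Bigl[1-e^{-2y(x+\frac{3}{2\sqrt 2}\log t)/(t-r)}\Bigr]\mathrm{d}y.$$
Completing the square one obtains
$$\tfrac{x^2}{2t}-\tfrac{(y-x)^2}{2(t-r)}=-\tfrac{y^2}{2(t-r)}+\tfrac{xy}{t-r}-\tfrac{x^2 r}{2t(t-r)}.$$
Since $x=at+o(t)$, for each fixed $y\geq 0$ the three terms tend to $0$, $ay$, and $a^2 r/2$ respectively, while the bracket exponent $2y(x+\frac{3}{2\sqrt 2}\log t)/(t-r)$ tends to $2ay$. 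The integrand therefore converges pointwise to $e^{-a^2r/2}\,u(r,y+\sqrt 2 r)\,e^{(\sqrt 2+a)y}(1-e^{-2ay})$, whose integral over $[0,\infty)$, divided by $\sqrt{2\pi}$, is precisely $C(r,a)$.

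The exchange of the limit with the integral is via dominated convergence; this is where all the work lies. Using $-y^2/(2(t-r))\leq 0$, the uniform bound $x/(t-r)\leq a+\varepsilon$ valid for $t$ large, and $1-e^{-s}\leq \min(1,s)$ for $s\geq 0$, one dominates the integrand uniformly in $t$ (and in $a$ on a fixed compact set) by
$$C_1\, u(r,y+\sqrt 2 r)\, e^{(\sqrt 2+a+\varepsilon)y}\min(1,C_2 y).$$
Integrability of this bound must then be verified. This follows from the first--moment inequality $u(r,z)\leq \E[\sum_{k\leq n(r)}u(0,z-\bar x_k(r))]=e^r\,\E[u(0,z-B_r)]$, which convolves $u(0,\cdot)$ with a Gaussian density of variance $r$; together with assumption (iv) and the exponential integrability built into (ii), this produces enough decay of $u(r,\cdot)$ at $+\infty$ to offset $e^{(\sqrt 2+a+\varepsilon)y}$. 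If the exponential rate is not directly sufficient for large $a$, one splits the integral at some cutoff $y=M(t)\to\infty$ with $M(t)=o(\sqrt t)$, uses the pointwise domination above on $[0,M(t)]$, and controls the tail $y>M(t)$ by keeping the genuine Gaussian factor $e^{-y^2/(2(t-r))}$ (which is still effective for such $y$).

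The main obstacle is thus this dominance step: quantifying the decay of $u(r,\cdot)$ at $+\infty$ inherited from the initial data through the F-KPP semigroup, and checking it is strong enough for all $a$ in the given compact. Once the dominant is in hand, dominated convergence produces the claimed pointwise limit $C(r,a)$, and uniformity in $a$ on a compact interval follows automatically, since every estimate above (the pointwise limit of the integrand, the explicit dominating function, and its integrability) can be taken uniformly in $a$ on that compact set -- the $o(1)$ terms in $x/t\to a$ and in the bracket exponent being chosen uniformly.
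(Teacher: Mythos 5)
Your proposal is correct and takes essentially the same route as the paper: substitute the definition of $\Psi$, identify the pointwise limit of the integrand (the paper does this without explicitly completing the square), and justify the interchange by dominated convergence, where the domination rests on the first-moment bound $u(r,z)\le e^r\,\E[u(0,z-B_r)]$ --- precisely Bramson's comparison with the linearised F-KPP equation that the paper invokes. Note that this comparison, via \eqv(A.4), already gives Gaussian decay $e^{-z^2/2r}$ of $u(r,\cdot)$ for the fixed $r$, which beats $e^{(\sqrt2+a+\varepsilon)y}$ for every $a$ in the compact set, so your fallback splitting at $y=M(t)$ is unnecessary.
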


\begin{proof}
Using  \eqv(A.1) we have
\bea\Eq(A.2)
&&\lim_{t\to\infty}e^{\sqrt{2}x}e^{x^2/2t}t^{1/2}\Psi(r,t,x+\sqrt{2}t)\nonumber\\\nonumber
&=& \lim_{t\to\infty}\frac{\sqrt{t}}{\sqrt{2\pi(t-r)}}e^{x^2/2t}\int_0^\infty u(r,y+\sqrt{2}r)e^{\sqrt{2}y}e^{-\frac{(y-x)^2}{2(t-r)}}\\
&&\quad\times\left[1-\exp\left(-2y\left(\frac{x+\sfrac{3}{2\sqrt{2}}\log t}{t-r}\right)\right)\right]\mbox{d}y .
\eea
Next we show that we can use dominated convergence to take the limit $t\to\infty$ into the integral. 
First, the integrand is bounded by
\be
B e^{-a^2r/2}u(r,y+\sqrt{2}r)e^{(\sqrt{2}+a+1)y},
\ee
where $B>0$. As was shown by Bramson \cite{B_C} (and used in \cite{ABK_E}),  the solution of the F-KPP equation can be bounded by the solution  $u^{(2)}(t,x)$  of the linearised F-KPP equation
\be
\partial_t u^{(2)}=\frac{1}{2}u_{xx}^{(2)}-u^{(2)}
\ee
with the same initial condition 
$u^{(2)}(0,x)=u(0,x)$.
Moreover there exists $y_0$ such that for any $x>0$
\be\Eq(A.4)
u^{(2)}(t,x)\leq e^te^{-x^2/2t}e^{y_0x/t}
\ee
Thus we get that 
\bea
& &\int_0^\infty B e^{-a^2r/2}u(r,y+\sqrt{2}r)e^{(\sqrt{2}+a+1)y}\mbox{d}y\nonumber\\
&\leq &\int_0^\infty B(r) e^{-a^2r/2}e^{-y^2/2r}e^{(a+1)y}\mbox{dy}<\infty.
\eea
where $B(r)$ is a constant that only depends on $r$. Hence we can apply dominated convergence to \eqv(A.2) and obtain
\bea\Eq(A.4.1)
& & \frac{1}{\sqrt{2\pi}}\int_0^\infty u(r,y+\sqrt{2}r)e^{\sqrt{2}y}\lim_{t\to \infty}\left[e^{\sqrt{2}y}e^{-\frac{(y-x)^2}{2(t-r)}}\left[1-e^{-2y\left(\frac{x+\frac{3}{2\sqrt{2}}\log t}{t-r}\right)}\right]\right]\mbox{d}y\nonumber\\
&=&\frac{1}{\sqrt{2\pi}}\int_0^\infty e^{-a^2r/2} u(r,y+\sqrt{2}r)e^{(\sqrt{2}+a)y}\left(1-e^{-2ay}\right)\mbox{d}y.
\eea
 This proves the lemma.
\end{proof}

\begin{proof}[Proof of Proposition \thv(A.Prop1)]
Due to the assumptions (i),(ii),(iii) and (iv) we can use Proposition 4.3 of \cite{ABK_E} for $t>8r$ and $x>8r-\frac{3}{2\sqrt{2}}\log t$ and $r$ large enough:
\be\Eq(A.4.2)
\gamma^{-1}(r) \Psi(r,t,x+\sqrt{2}t)
\leq u(t,x+\sqrt{2}t)
\leq\gamma(r) \Psi(r,t,x+\sqrt{2}t) ,
\ee
where $\g(r)$ does not depend $x$ and $t$ and $\lim_{r\to\infty}\g(r)=1$. 
Since $\g(r)\to 1$ as $r\to\infty$ this implies
\be
\limsup_{t\to\infty}e^{\sqrt{2}x}e^{x^2/2t}t^{1/2}u(t,x+\sqrt{2}t)\leq \liminf_{r\to\infty}C(r,a)
\ee
and
\be
\liminf_{t\to\infty}e^{\sqrt{2}x}e^{x^2/2t}t^{1/2}u(t,x+\sqrt{2}t)\geq \limsup_{r\to\infty}C(r,a)
\ee
Hence $\lim_{r\to\infty}C(r,a)=C(a)$ exists. Moreover, 
\be
\lim_{t\to\infty}e^{\sqrt{2}x}e^{x^2/2t}t^{1/2}u(t,x+\sqrt{2}t)
\ee
 exists and is equal to $C(a)$. It is left to show that $C(a) \neq 0$ for $a>0$. If $C(a)=0$ we would have
\be \Eq(A.5)
\lim_{t\to\infty}e^{\sqrt{2}x}e^{x^2/2t}t^{1/2}u(t,x+\sqrt{2}t)=0,
\ee
but 
\be\lim_{t\to\infty}e^{\sqrt{2}x}e^{x^2/2t}t^{1/2}u(t,x+\sqrt{2}t)\geq C(r,a)\g(r)^{-1},
\ee
for $r$ large enough, by \eqv(A.4.2). This contradicts \eqv(A.5). The same proposition implies 
 \be
\lim_{t\to\infty}e^{\sqrt{2}x}e^{x^2/2t}t^{1/2}u(t,x+\sqrt{2}t)\leq C(r,a)\g(r).
 \ee
 Hence $C(a)\neq \infty$.   Proposition \thv(A.Prop1) is proven. 
\end{proof}

\section{The McKean martingale}

In this section we pick up the idea of \cite{LS} and consider a suitable convergent martingale for the time inhomogeneous BBM with $\s_1<\s_2$. Let $x_i(s),\,1\leq i\leq n(s)$ be the particles of a BBM where the Brownian motions have variance $\s_1^2$ with $\s_1^2<1$.  Define
\be\Eq(D.def)
Y_s=\sum_{i=1}^{n(s)}e^{-s(1+\s_1^2)+\sqrt{2}x_i(s)}.
\ee
This turns out to be a uniformly integrable martingale that converges almost surely to a positive limit $Y$. 

\begin{remark}
Note that in terms of statistical mechanics, 
$Y_s$ can be thought of as a normalised partition function at  inverse temperature
$\s_1\sqrt 2$ (for ordinary BBM). Here the critical temperature is $\sqrt 2$, so that we are in the high-temperature case. 
In the case of the GREM, where the underlying tree is deterministic, this quantity is known to even converge to a constant \cite{BK1}.  
\end{remark}

\begin{theorem}\TH(D.Th1)
The limit $\lim_{s\to\infty}Y_s$ exists almost surely and in $L^1$, is finite and strictly positive.
\end{theorem}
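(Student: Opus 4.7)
The proof has three ingredients: the martingale identity (giving a.s.\ convergence), uniform integrability (giving $L^1$ convergence and hence $\E[Y]=1$), and strict positivity of the limit.

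The martingale property is a direct computation. Conditioning on $\mathcal F_u$ for $u<s$ and exploiting the branching property together with $\E[n(s-u)]=e^{s-u}$ and the single-particle identity $\E[e^{\sqrt 2 B_{s-u}}]=e^{\sigma_1^2(s-u)}$ for a BM of variance $\sigma_1^2$, one recovers $\E[Y_s\mid\mathcal F_u]=Y_u$. Nonnegativity and Doob's theorem then give $Y_s\to Y$ almost surely with $\E[Y]\le 1$.

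The nontrivial step is uniform integrability. Viewed as the additive martingale at inverse temperature $\beta=\sqrt 2$ for BBM with branching rate $1$ and Brownian variance $\sigma_1^2$, the critical inverse temperature is $\beta_c=\sqrt 2/\sigma_1$; the hypothesis $\sigma_1<1$ places us in the subcritical (high-temperature) regime where $L^1$ convergence is known. A direct many-to-two computation gives
$$\E[Y_s^2]=e^{s(2\sigma_1^2-1)}+K\int_0^s e^{r(2\sigma_1^2-1)}\,\mathrm d r,$$
which is uniformly bounded in $s$ provided $\sigma_1^2<1/2$, settling the theorem in that range. For $1/2\le\sigma_1^2<1$ this second-moment bound fails, and I would invoke the Lyons spine decomposition: under the size-biased measure $\widetilde\P$ with density $Y_s$ on $\mathcal F_s$, the BBM acquires a distinguished spine whose ancestor moves as a BM of variance $\sigma_1^2$ with drift $\sqrt 2\sigma_1^2$, while the subtrees grafted off the spine at the branching events are independent copies of the original BBM under $\P$. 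Writing the spine position as $\sqrt 2\sigma_1^2 s+\sigma_1 W_s$ for a standard BM $W$, the contribution of the spine particle itself to $Y_s$ equals $\exp(-s(1-\sigma_1^2)+\sqrt 2\sigma_1 W_s)$, which vanishes a.s.\ because $1-\sigma_1^2>0$; the contributions of the grafted subtrees form an almost surely convergent series under the $L\log L$-type condition implied by $K<\infty$. The Biggins/Kyprianou $L^1$ convergence theorem then yields $\E[Y]=1$ and $L^1$ convergence of $Y_s$.

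Strict positivity follows by a Galton-Watson fixed-point argument. Decomposing at the first branching time $T$ gives $Y=e^{\sqrt 2 B_T-T(1+\sigma_1^2)}\sum_{k=1}^\xi Y^{(k)}$, with $Y^{(k)}$ i.i.d.\ copies of $Y$ independent of $(T,B_T,\xi)$, so $q:=\P(Y=0)$ satisfies $q=\sum_{k\ge 1}p_k q^k$. Since $\E[\xi]=2>1$ and $p_0=0$, the only fixed points of the generating function in $[0,1]$ are $0$ and $1$; because $\E[Y]=1$ excludes $q=1$, we must have $q=0$. The main difficulty is establishing uniform integrability in the range $\sigma_1^2\ge 1/2$: the second-moment method breaks down and the spine change-of-measure, together with a careful control of the grafted subtrees, becomes essential.
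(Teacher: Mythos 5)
Your proposal is correct, but the key step --- uniform integrability --- is handled by a genuinely different route than the paper's. The paper does not invoke the spine/size-biasing machinery; instead it proves uniform integrability by a truncated second moment computation: it introduces $Y_s^A$, the same sum restricted to particles whose paths stay in a tube $\TT_{s,r}$ around the line $q\mapsto \sqrt 2\s_1^2 q$ and whose endpoints lie in a window $\GG_{s,A,1/2}$ of width $O(\sqrt s)$ around $\sqrt 2\s_1^2 s$, shows $\E[Y_s-Y_s^A]$ is small uniformly in $s$ (for $A,r$ large), and then shows $\sup_s\E[(Y_s^A)^2]<\infty$ for \emph{all} $\s_1<1$ via the many-to-two lemma --- the path restriction is exactly what kills the divergence that, as you correctly observe, makes the untruncated second moment blow up once $\s_1^2\geq 1/2$. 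Your observation that the naive bound $\E[Y_s^2]=e^{s(2\s_1^2-1)}+K\int_0^s e^{r(2\s_1^2-1)}\,\mathrm dr$ only closes the argument for $\s_1^2<1/2$ is precisely the reason the paper truncates. Your alternative for the remaining range --- Biggins/Kyprianou $L^1$ convergence via the Lyons spine change of measure, with $\b=\sqrt 2<\b_c=\sqrt 2/\s_1$ and the $L\log L$ condition supplied by $K<\infty$ --- is a valid and standard substitute, and arguably cleaner as a citation; what the paper's truncation buys is a self-contained elementary argument and, more importantly, the truncated martingale $Y_s^A$ and its uniform second-moment bound $D_2(r)$, which are reused later (Lemma \thv(tilde.1) and the proof of Theorem \thv(M.Th1)). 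On strict positivity you are actually more complete than the paper: the paper only deduces $Y\not\equiv 0$ from $\E[Y]=1$, whereas your Galton--Watson fixed-point argument (using $p_0=0$ and $\E[\xi]=2>1$ to conclude $\P(Y=0)\in\{0,1\}$ and hence $=0$) is what the claim ``strictly positive'' really requires.
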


The assertion of this theorem follows immediately from the following proposition.

\begin{proposition}\TH(D.Prop1) If $\s_1<1$, 
$Y_s$ is a uniformly integrable martingale with $\E[Y_s]=1$
\end{proposition}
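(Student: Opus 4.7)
The statement unpacks into three assertions: $\E[Y_s]=1$, the martingale property, and uniform integrability. The first two are routine, while the third is the essential point.

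For $\E[Y_s]=1$ I would apply the many-to-one lemma: $\E[Y_s] = e^{-s(1+\s_1^2)}\E\sum_i e^{\sqrt 2 x_i(s)} = e^s \cdot e^{-s(1+\s_1^2)} \E[e^{\sqrt 2 B_s}] = e^s\cdot e^{-s(1+\s_1^2)}\cdot e^{\s_1^2 s}=1$, where $B_s$ is Brownian motion of variance $\s_1^2$. The martingale property follows by conditioning on $\FF_r$ and using the branching property: for $s>r$,
\be\Eq(DP1)
Y_s = \sum_{i=1}^{n(r)} e^{-r(1+\s_1^2)+\sqrt 2 x_i(r)}\, Y^{(i)}_{s-r},
\ee
where the $Y^{(i)}_{s-r}$ are i.i.d.\ copies of $Y_{s-r}$, independent of $\FF_r$; by the mean-one property just established, $\E[Y_s\mid\FF_r]=Y_r$.

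For uniform integrability I would aim for $L^p$-boundedness for some $p\in(1,2]$. The key observation is that because $\s_1^2<1$, one can pick $p>1$ with $p\s_1^2<1$; then the annealed sum satisfies
\be\Eq(DP2)
e^s\,\E\bigl[e^{p\sqrt 2 B_s - ps(1+\s_1^2)}\bigr] = e^{s\,(1-p)(1-p\s_1^2)},
\ee
which decays exponentially since $(1-p)<0$ and $(1-p\s_1^2)>0$. I would leverage this through the von Bahr--Esseen inequality for sums of conditionally independent, mean-zero variables applied to \eqv(DP1):
\be\Eq(DP3)
\E\bigl[|Y_s-Y_r|^p\bigm|\FF_r\bigr] \leq 2\sum_{i=1}^{n(r)} e^{-pr(1+\s_1^2)+p\sqrt 2 x_i(r)}\,\E\bigl[|Y_{s-r}-1|^p\bigr].
\ee
Taking expectations and inserting the many-to-one bound \eqv(DP2) gives $\E|Y_s-Y_r|^p \leq 2\,\E|Y_{s-r}-1|^p\cdot e^{r(1-p)(1-p\s_1^2)}$. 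Combining with $Y_s^p\leq 2^{p-1}(|Y_s-Y_r|^p+Y_r^p)$ and iterating on a geometric grid of times yields $\sup_s \E[Y_s^p]<\infty$, hence uniform integrability.

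The main obstacle is the self-referential nature of the $L^p$ estimate: the right-hand side of \eqv(DP3) involves $\E|Y_{s-r}-1|^p$, which is itself what we want to control. I would handle this by bootstrapping: note that $a_s:=\E[Y_s^p]$ is locally bounded (since $Y_0=1$ and $Y$ has no blow-up in finite time by the boundedness of $n(s)$ moments, which uses $K<\infty$), and then close the recursion by choosing $r$ large enough that the exponential prefactor in \eqv(DP3) dominates. An alternative, perhaps cleaner, route is the spine / size-biased tree construction of Lyons--Pemantle--Peres: under the change of measure $d\Q_s = Y_s\,d\P$, uniform integrability is equivalent to the spine particle not escaping to infinity too fast, and the hypothesis $\s_1<1$ is precisely the condition that makes this the case.
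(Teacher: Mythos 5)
Your computation of $\E[Y_s]=1$ and your verification of the martingale property coincide with the paper's. For uniform integrability, however, you take a genuinely different route, and it works. The paper truncates: it introduces $Y_s^A$, the sum restricted to particles whose paths stay in the tube $\GG_{s,A,1/2}\cap\TT_{s,r}$ around the line of slope $\sqrt2\s_1^2$, shows $\E[Y_s-Y_s^A]$ is small uniformly in $s$, bounds the \emph{second} moment of $Y_s^A$ uniformly in $s$ via the many-to-two lemma, and then concludes by a splitting argument. The truncation is unavoidable on that route, because the untruncated second moment $\E[Y_s^2]$ diverges as $s\to\infty$ whenever $\s_1^2>1/2$ (the two-particle term is $K\int_0^s e^{(2\s_1^2-1)q}\,dq$). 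Your $L^p$ route with $1<p<\min(2,\s_1^{-2})$ sidesteps the truncation entirely: the annealed exponent $(1-p)(1-p\s_1^2)<0$ is computed correctly, von Bahr--Esseen applies conditionally on $\FF_r$ since the $Y^{(i)}_{s-r}$ are i.i.d.\ and independent of $\FF_r$, and the recursion you worry about does close --- fix $r$ with $2^p e^{-(p-1)(1-p\s_1^2)r}\leq 1/2$, note that $\E[Y_u^p]$ is nondecreasing in $u$ (convexity makes $Y_u^p$ a submartingale) and finite for each finite $u$ because $\E[Y_u^2]<\infty$ there (this uses $K<\infty$), and induct over the blocks $((n-1)r,nr]$. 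This yields the stronger conclusion $\sup_s\E[Y_s^p]<\infty$, hence $L^p$ convergence, and it degenerates at $\s_1=1$ exactly as the paper's remark predicts, since then $(1-p)(1-p\s_1^2)>0$. What the paper's route buys instead is reusability: the truncated object $Y^A_s$ and its uniform second-moment bound $D_2(r)$ are invoked again later, e.g.\ in the proof of Theorem \thv(M.Th1), so the localization work is not specific to this proposition. Your spine alternative is also sound: $\s_1<1$ is precisely the condition placing the additive martingale at parameter $\sqrt2$ in the subcritical, uniformly integrable phase.
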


\begin{remark} We would like to call this martingale \emph{McKean martingale}, since McKean \cite{McKean} had originally conjectured that this martingale 
(with $\s_1=1$) was the martingale in the representation of the extremal distribution of BBM, which, as Lalley and Sellke showed is wrong as it is actually the derivative martingale that appears there. We find it nice to see that in the time-inhomogeneous case with $\s_1<1$, 
KcKean turns out to be right! We will see in the proof that the uniform integrability of this martingale breaks down at $\s_1=1$.
\end{remark}

\begin{remark}Note further that if $\s_1=0$, then $Y_t=e^{-t} n(t)$ which is well known to converge to an exponential random variable.
\end{remark}

\begin{proof}
Clearly,
\be\Eq(D.1)
\E [Y_s]=e^s \E\left[e^{-(1+\s_1^2)s+\sqrt{2}x_1(s)}\right]=1.
\ee
Next we show that $Y_s$ is a martingale. Let $0<r<s$. Then
\be\Eq(D.2)
\E[Y_s\vert \FF_r]=\sum_{i=1}^{n(r)}\E\left[\sum_{j=1}^{n_j(s-r)}e^{-s(1+\s_1^2)+\sqrt{2}(x_j^i(s-r)+x_i(r))}\left \vert \right.\FF_r\right],
\ee
where for $1\leq i\leq r$: $\left\{x_j^i(s-r),1\leq j\leq n_i(s-r)\right\}$ are the particles of independent BBM's with variance $\s_1^2$ at time $s-r$. \eqv(D.2) is equal to
\be
\sum_{i=1}^{n(r)}e^{-r(1+\s_1^2)+\sqrt{2}x_i(r)}=Y_r,
\ee
as desired.

 It remains to show that $Y_s$ is uniformly integrable. 
We will  write abusively $x_k(r)$ for the ancestor of $x_k(s)$ at time $r\leq s$
and write $x_k$ for the entire ancestral path of $x_k(s)$.
Define the truncated variable 
\be\Eq(D.6)
Y_s^{A}=\sum_{i=1}^{n(s)}e^{-(1+\sigma_1^2)s+\sqrt{2}x_i(s)}\1_{\{x_i\in \GG_{s,A,1/2},x_i\in \TT_{s,r}\}}.
\ee

%

First $Y_s-Y_s^A\geq 0$, and a simple computation using the independence of $x_k(s)$ and $x_k(r)- \sfrac rsx_k(s)$  together with Lemma 
\thv(simplefact) shows that 
\bea\Eq(D.3)\nonumber
\E\left[Y_s-Y_s^{A}\right]&\leq& e^s \int_{-\infty}^{\infty}e^{-(1+\s_1^2)s+\sqrt{2s}\s_1x}\1_{\{x-\sqrt{2s}\s_1\not\in [-A,A]\}}e^{-x^2/2}\sfrac{{d}x}{\sqrt{2\pi}} +\e\\
&=& \int_{|z|>A}e^{-z^2/2}\sfrac{{d}z}{\sqrt{2\pi}} +\e,
\eea
which can be made as small as desired by taking $A$ and $r$ to infinity.
The key point is that the the second moment of $Y^A_s$ is uniformly bounded in $s$. 
\be
\E\left[(Y_s^A)^2\right]=\E\Biggl[\Biggl(\sum_{i=1}^{n(s)}e^{-(1+\sigma_1^2)s+\sqrt{2}x_i(s)}\1_{\{x_i\in \GG_{s,A,1/2}\cap \TT_{s,r}\}}\Biggr)^2\Biggr] 
\equiv (T1)+(T2),
\ee
where
\bea\Eq(D.4)
(T1)&=&\E\Biggl[\sum_{i=1}^{n(s)}e^{-2((1+\sigma_1^2)s-\sqrt{2}x_i(s))}\1_{\{x_i\in \GG_{s,A,1/2}\cap \TT_{s,r}\}}\Biggr]\nonumber\\
(T2)&=&\E\biggl[\sum_{\stackrel{i,j=1}{i\neq j}}e^{-2(1+\sigma_1^2)s+\sqrt{2}(x_i(s)+x_j(s))}\1_{\{x_i,x_j\in \GG_{s,A,1/2}\cap \TT_{s,r}\}}\biggr]
\eea
We start by controlling $(T1)$. 
\bea
(T1)\leq \frac{e^{(s-2s(1+\s_1^2)}}{\sqrt{2\pi}}\int_{\sqrt{2s}\s_1-A/\s_1}^{\sqrt{2s}\s_1+A/\s_1}e^{2\sqrt{2s}\s_1x}e^{-x^2/2}{{d}x}\nonumber\\
=\frac{e^{-(1-\s_1^2)s}}{\sqrt {2\pi}}\int_{-A/\s_1}^{A/\s_1}e^{-x^2/2}dx\leq 
 e^{-(1-\s_1^2)s}\to 0 \quad \mbox{as }s\to\infty.
\eea
Now we control $(T2)$. By the sometimes so-called "many-to-two lemma" (see e.g.\cite{B_C}, Lemma 10), and dropping the useless parts of the conditions on the Brownian bridges
\begin{eqnarray}
\Eq(D.10)
(T2)\leq &&Ke^s\int_0^{s}e^{s-q}\int_{\sqrt{2}\s_1^2{q}-
I_1(q,s)}^{\sqrt{2}\s_1^2{q}+I_1(q,s)}
\Biggl(\int_{\sqrt{2}\s_1^2s-A\sqrt{s}-x}^{\sqrt{2}\s_1^2s+A\sqrt{s}-x}e^{-s(1+\s_1^2)+\sqrt{2}(x+y)} \nonumber\\ 
&&\quad\quad\quad \times e^{-\frac{y^2}{2\s_1^2(s-q)}}\sfrac{{d}y}{\s_1\sqrt{2\pi(s-q)}}\Biggr)^2 e^{-\frac{x^2}{2q\s_1^2}}\sfrac{{d}xdq}{\sqrt{2\pi\s_1^2q}} ,
\end{eqnarray}
where $K=\sum_{i=1}^{\infty}p_kk(k-1)$ and $I_1(q,s)={Aq/\sqrt{s}+((q\wedge (s-q))\lor r)^\g}$. Moreover
 We   change  variables $x=z+\sqrt{2}\s_1^2q$ and obtain
\begin{eqnarray}\Eq(D.6)
&& Ke^s\int_0^{s}e^{s-q}\int_{ -I_1(q,s)}^{+I_1(q,s)}
\Biggl(\int_{\sqrt{2}\s_1^2(s-q)-A\sqrt{s}-z}^{\sqrt{2}\s_1^2(s-q)+A\sqrt{s}-z}e^{-s(1+\s_1^2)+\sqrt{2}(z+\sqrt{2}\s_1^2q+y)} \nonumber\\ 
&&\quad\quad \times e^{-\frac{y^2}{2\s_1^2(s-q)}}\sfrac{{d}y}{\s_1\sqrt{2\pi(s-q)}}\Biggr)^2 e^{-\frac{(z+\sqrt{2}\s_1^2{q})^2}{2\s_1^2q}}\sfrac{{d}zdq}{\sqrt{2\pi\s_1^2q}},
\end{eqnarray}
Now we change  variables $w=\frac{y}{\s_1\sqrt{s-q}}-\sqrt{2}\s_1\sqrt{s-q}$. \eqv(D.6) is equal to
\be\Eq(D.7)
 K\int_0^{s}e^{-q(1-2\s_1^2)}\int_{ -I_1(q,s)}^{+I_1(q,s)}e^{+2\sqrt{2}z} 
\Biggl(\int_{\frac{-A\sqrt{s}-z}{\s_1\sqrt{s-q}}}^{ \frac{+A\sqrt{s}-z}{\s_1\sqrt{s-q}}}
 e^{-{w^2}/{2}}\sfrac{{d}w}{\sqrt{2\pi}}\Biggr)^2 e^{-\frac{(z+\sqrt{2}\s_1^2{q})^2}{2\s_1^2q}}\sfrac{{d}zdq}{\sqrt{2\pi\s_1^2q}}.
\ee
Now the integral with respect to $w$ is bounded by $1$. Hence \eqv(D.7) is bounded from above by
\be\Eq(D.9)
 K\int_0^{s}e^{-q(1-2\s_1^2)}\int_{ -I_1(q,s)/\s_1\sqrt q}^{+I_1(q,s)/\s_1\sqrt q} 
e^{-\frac{(z-\sqrt{2}\s_1\sqrt{q})^2}{2}}\sfrac{{d}zdq}{\sqrt{2\pi}}.
\ee
We split the integral over $q$ into the three parts $R_1$, $R_2$, and $R_3$ according to the 
integration from $0$ to $r$, $r$ to $s-r$, and $s-r$ to $r$, respectively.
Then 
\be
R_2\leq K\int_r^{s-r}
e^{-q(1-2\s_1^2)}
 \frac{e^{-\frac 12\left(I_1(q,s)/\s_1\sqrt q-\sqrt{2}\s_1\sqrt{q}\right)^2}}{\sqrt{2\pi}\left(\sqrt{2}\s_1\sqrt{q}-I_1(q,s)/\s_1\sqrt q\right)}{dq}
\ee
This is bounded by
\be
K\int_r^{s-r}e^{-(1-\s_1^2)q+O(q^\g)}{d}q
\leq \frac {C}{1-\s_1^2}e^{-c(1-\s_1^2)r}.
\ee
For $R_1$ the integral over $z$ can only be bounded by one. This gives 
\be
R_1\leq  K\int_0^re^{(2\s_1^2-1)q}{d}q\equiv D_1(r),
\ee
$R_3$ can be treated the same way as $R_2$ and 
we get 
\be
R_3
\leq K\int_{s-r}^s e^{-(1-\s_1^2)q+O(r^\g)}{d}q\leq\frac{K}{1-\s_1^2}e^{-(1-\s_1^2)(s-r)+O(r^\g)}\to 0 \quad\mbox{as }s\to\infty.
\ee
Putting all three estimates together, we see that  
$\sup_{s}\E\left[ \left(Y^A_s\right)^2\right]\leq D_2(r)$.
From this it follows that $Y_s$ is uniformly integrable. Namely,
\bea\Eq(ui.1)
\E [Y_s\1_{Y_s>z}] &=&\E[ Y_s^A \1_{Y_s>z}] + \E [ (Y_s-Y_s^A) \1_{Y_s>z}] 
\\\nonumber
&=&\E\left[ Y_s^A \1_{Y^A_s>z/2}\right] +\E\left[ Y_s^A \left(\1_{Y_s>z}-\1_{Y^A_s>z/2}\right)\right]+ \E [ (Y_s-Y_s^A) \1_{Y_s>z}].
\eea
 For the first term we have
 \be\Eq(ui.2)
\E\left[ Y_s^A \1_{Y^A_s>z/2}\right] \leq \frac 2z \E\left[ \left(Y^A_s\right)^2\right]\leq \frac 2zD_2(r).
\ee
For the second, we have 
\bea\Eq(ui.3)
\E\left[ Y_s^A \left(\1_{Y_s>z}-\1_{Y^A_s>z/2}\right)\right]&\leq& \E\left[ Y_s^A \1_{Y_s-Y^A_s\geq z/2} \1_{Y^A_s\leq z/2}\right]
\\\nonumber &
\leq& \frac z2
\P\left[(Y_s-Y_s^A) >z/2\right] \leq \E \left[Y_s-Y^A_s\right].
\eea
The last term in \eqv(ui.1) is also bounded by $\E \left[Y_s-Y^A_s\right]$. Choosing now $A$ and $r$ such that $\E \left[Y_s-Y^A_s\right]\leq \e/3$, and then 
$z$ so large that $ \frac 2z D_2(r)\leq \e/3$, 
we obtain that $\E [Y_s\1_{Y_s>z}] \leq \e$, for large enough $z$, uniformly in $s$.
Thus  $Y_s$ is uniformly integrable, which we wanted to show. 
\end{proof}

\begin{proof}[Proof of Theorem \thv(D.Th1)]By Proposition \thv(D.Prop1) $Y_s$ is a positive uniformly integrable martingale. By Doob's martingale convergence theorem we have that $\lim Y_s=Y$ exists almost surely and is finite. Moreover $Y$ is positive and $Y_s\stackrel{L^1}{\to}Y$. In particular, this implies $Y \not\equiv 0$.
\end{proof}

We will also need to control the processes 
 $\tilde Y_{s,\g}^{A}=\sum_{i=1}^{n(s)}e^{-(1+\s_1^2)s+\sqrt{2}x_i(s)}\1_{x_i\in\GG_{s,A,\g}}$. 
 
 \begin{lemma}\TH(tilde.1)
 The family of random variables $\tilde Y_{s,\g}^{A}$, $s,A\in \R_+,1>\g>1/2$ is 
 uniformly integrable and converges, as $s\uparrow \infty$ and $A\uparrow\infty$, to $Y$, both in probability and in $L^1$.
 \end{lemma}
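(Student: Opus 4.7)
The strategy is to dominate $\tilde Y_{s,\gamma}^{A}$ by $Y_s$ and then estimate the $L^1$ gap directly via a Gaussian tail, so that Theorem \thv(D.Th1) does all the heavy lifting.

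\emph{Step 1: Uniform integrability by domination.} Removing the indicator $\1_{x_i\in\GG_{s,A,\g}}$ only deletes nonnegative terms, so $0\le \tilde Y_{s,\gamma}^{A}\le Y_s$ holds pointwise for every $s,A$ and every $\gamma\in(1/2,1)$. Proposition \thv(D.Prop1) already gives that the family $\{Y_s\}_{s\geq 0}$ is uniformly integrable, and uniform integrability is preserved under domination by a u.i. family, yielding uniform integrability of the whole family $\{\tilde Y_{s,\gamma}^{A}\}$.

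\emph{Step 2: $L^1$ gap.} Applying the many-to-one formula to $Y_s-\tilde Y_{s,\gamma}^{A}\ge 0$ gives, with $W\sim\mathcal N(0,1)$,
\[
\E\bigl[Y_s-\tilde Y_{s,\gamma}^{A}\bigr]=e^{s}\,\E\Bigl[e^{-(1+\sigma_1^{2})s+\sqrt 2\,\sigma_1\sqrt s\,W}\,\1_{\{|\sigma_1\sqrt s\,W-\sqrt 2\sigma_1^{2}s|>A s^{\gamma}\}}\Bigr].
\]
Completing the square in the exponent and substituting $z=W-\sqrt 2\,\sigma_1\sqrt s$ collapses the prefactor $e^s$ exactly and leaves
\[
\E\bigl[Y_s-\tilde Y_{s,\gamma}^{A}\bigr]=\int_{|z|>A s^{\gamma}/\sigma_1}e^{-z^{2}/2}\,\frac{dz}{\sqrt{2\pi}}.
\]
Because $\gamma>1/2>0$, the integration threshold $A s^{\gamma}/\sigma_1$ diverges as either $s\uparrow\infty$ (with $A$ fixed) or $A\uparrow\infty$ (uniformly in $s$), so the right-hand side tends to zero along the joint limit.

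\emph{Step 3: Conclusion.} Combining the triangle inequality with Theorem \thv(D.Th1) (i.e.\ $Y_s\to Y$ in $L^1$),
\[
\E\bigl[\bigl|\tilde Y_{s,\gamma}^{A}-Y\bigr|\bigr]\le \E\bigl[Y_s-\tilde Y_{s,\gamma}^{A}\bigr]+\E\bigl[|Y_s-Y|\bigr]\xrightarrow[s,A\uparrow\infty]{}0.
\]
This establishes $L^1$ convergence to $Y$, and convergence in probability follows from Markov's inequality. The strict positivity and finiteness of the limit are inherited from $Y$ itself via Theorem \thv(D.Th1).

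\textbf{Main obstacle.} There is essentially none: the uniform integrability of $Y_s$, which was the technical core of Proposition \thv(D.Prop1) (requiring the careful second-moment computation on $Y_s^A$), is exactly the input that turns the simple first-moment computation above into $L^1$ convergence. The only thing to notice is that the restriction $\gamma>1/2$ is not essential for this particular lemma---any $\gamma>0$ would suffice---but it is the regime in which $\tilde Y_{s,\gamma}^A$ will later be useful, since the Brownian-bridge path estimate of Lemma \thv(simplefact) requires $\gamma>1/2$.
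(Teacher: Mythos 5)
Your argument is correct and follows essentially the same route as the paper: the $L^1$ gap $\E[Y_s-\tilde Y^A_{s,\g}]$ is controlled by exactly the first-moment Gaussian computation of Eq.\ \eqv(D.3), and the convergence statement then follows from Theorem \thv(D.Th1) via the triangle inequality (the paper passes through convergence in probability, you go directly to $L^1$; both are fine). Your Step 1 is actually a small improvement: the paper handles uniform integrability by ``rerunning'' the truncated second-moment estimates of Proposition \thv(D.Prop1), whereas your observation that $0\leq \tilde Y^A_{s,\g}\leq Y_s$ together with the uniform integrability of $\{Y_s\}$ already established there gives the claim immediately, since $\E[\tilde Y^A_{s,\g}\1_{\tilde Y^A_{s,\g}>K}]\leq \E[Y_s\1_{Y_s>K}]$.

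Two small corrections. In Step 2 the change of variables gives the threshold $As^{\g-1/2}/\s_1$, not $As^{\g}/\s_1$: the constraint defining $\GG_{s,A,\g}$ is $|x_i(s)-\sqrt2\s_1^2 s|\leq As^\g$ and $x_i(s)=\s_1\sqrt s\,W$, so after substituting $z=W-\sqrt2\s_1\sqrt s$ the event becomes $|\s_1\sqrt s\,z|>As^\g$. This does not damage the proof, because $\g>1/2$ ensures $s^{\g-1/2}\geq 1$ for $s\geq1$, so the tail integral is still bounded uniformly in $s$ by $\int_{|z|>A/\s_1}e^{-z^2/2}\,dz/\sqrt{2\pi}$, which vanishes as $A\uparrow\infty$. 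However, your closing remark that any $\g>0$ would suffice is a casualty of this slip: for $\g<1/2$ the threshold $As^{\g-1/2}/\s_1$ tends to zero as $s\uparrow\infty$, the gap $\E[Y_s-\tilde Y^A_{s,\g}]$ tends to $1$, and $\tilde Y^A_{s,\g}\to0$ in $L^1$ rather than to $Y$. So the restriction $\g>1/2$ is genuinely needed for the lemma itself, not only for the later applications via Lemma \thv(simplefact).
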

 
 \begin{proof} The proof of uniform integrability is a rerun of the proof of Proposition \thv(D.Prop1), noting that the bounds on the truncated second moments are uniform in $A$. Moreover, the same computation as in Eq. \eqv(D.3) shows that 
 $\E |Y_s-\tilde Y^A_{s,\g}| \leq \e$, uniformly in $s$, for $A$ large enough.
 Therefore,
 \be\Eq(tilde.2)
 \lim_{A\uparrow\infty}\limsup_{s\uparrow \infty} \E  |Y_s-\tilde Y^A_{s,\g}| =0,
 \ee
  which implies that $Y_s-\tilde Y^A_{s,\g}$ converges to zero in probability. Since 
  $Y_s$ converges to $Y$ almost surely, we arrive at the second assertion of the lemma.
  \end{proof}
 
 \section{Convergence of the maximum of two-speed BBM}
Using the results established in the last three sections, 
we show now the convergence of the law the of the maximum of 
two-speed BBM in the case $\s_1<\s_2$. 

\begin{theorem}\TH(M.Th1)
Let $\{x_k(t),1\leq k\leq n(t)\}$ be the particles of a time inhomogeneous BBM with $\s_1<\s_2$ and the normalising assumption $\s_1^2b+\s_2^2(1-b)=1$. Then, with $\tilde m(t)$ as in Theorem \thv(speed.1),  
\be\Eq(M.Th2)
\lim_{t\to\infty}\P\left[\max_{1\leq k \leq n(t)}x_k(t)-\tilde m(t)\leq y\right]= \E\left[\exp\left(-\s_2C(a)Ye^{-\sqrt{2}y}\right)\right].
\ee
$Y$ is  the limit of the McKean martingale from the last section, 
and $C(a)$ is the positive constant given by
\be\Eq(constant.1)
C(a)=\lim_{r\to\infty}\int_0^\infty e^{-a^2r/2}\P\left[\max_{k\leq n(t)}\bar{x}_k(r)> z+\sqrt{2}r\right]e^{(\sqrt{2}+a)z}\left(1-e^{-2az}\right)\mbox{d}z,
\ee
where $\{\bar{x}_k(t),k\leq n(t)\}$ are the particles of a standard BBM 
 and $a=\sqrt{2}(\s_2-1)$.

\end{theorem}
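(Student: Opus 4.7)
My approach is to condition on the positions at the speed-change time $bt$ and combine the three ingredients developed in Sections 2--4. Conditionally on $\FF_{bt}$, the descendants of each $x_k(bt)$ form an independent BBM of variance $\s_2^2$ run for time $(1-b)t$; by scaling, such a BBM has maximum equal in law to $\s_2$ times that of a standard BBM. Writing $u(s,X)=\P[\max_k \bar x_k(s)>X]$ for the F-KPP solution with Heaviside data, the branching property gives
\begin{equation*}
\P\!\left[\max_k x_k(t) \leq \tilde m(t) + y \,\bigl|\, \FF_{bt}\right]
= \prod_{k=1}^{n(bt)} \left[1 - u\!\left((1-b)t, \tfrac{\tilde m(t)+y - x_k(bt)}{\s_2}\right)\right],
\end{equation*}
and the task reduces to identifying the limit of the expectation of this product.

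Next I would use Proposition \thv(P.Prop1) to restrict, up to an error $\e$, to indices $k$ with $z_k := x_k(bt)-\sqrt 2\s_1^2 bt \in [-A\sqrt t, A\sqrt t]$. Setting $\tau=(1-b)t$ and $a=\sqrt 2(\s_2-1)$, the normalisation $\s_1^2 b+\s_2^2(1-b)=1$ gives $\sqrt 2 t - \sqrt 2\s_1^2 bt = \sqrt 2 \s_2^2(1-b)t$, so that the argument of $u$ takes the form $a\tau + c_k + \sqrt 2\tau$ with $c_k = (y-\tfrac{\log t}{2\sqrt 2}-z_k)/\s_2 = o(\tau)$ uniformly on the extremal window. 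Proposition \thv(A.Prop1) then applies, and the identities $\sqrt 2 a+a^2/2=\s_2^2-1$ and $\sqrt 2+a=\sqrt 2 \s_2$ allow one to expand the exponent and obtain, uniformly for $|z_k|\leq A\sqrt t$,
\begin{equation*}
u\!\left(\tau, \tfrac{\tilde m(t)+y - x_k(bt)}{\s_2}\right) = (1+o(1))\,\frac{C(a)}{\sqrt{1-b}}\,e^{-(\s_2^2-1)(1-b)t}\,e^{-\sqrt 2 y}\,e^{\sqrt 2 z_k - c_k^2/(2\tau)}.
\end{equation*}

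Since each factor $u(\cdots)$ is uniformly small, $-\log(1-u)=u+O(u^2)$ reduces the problem to evaluating $\sum_k u(\cdots)$. The arithmetic identity $(\s_2^2-1)(1-b)+2\s_1^2 b = b(1+\s_1^2)$ lets one absorb all exponentially growing $t$-dependence into the McKean normalisation, leaving
\begin{equation*}
\sum_k u(\cdots) \approx \frac{C(a)}{\sqrt{1-b}}\,e^{-\sqrt 2 y}\sum_{k} e^{-bt(1+\s_1^2)+\sqrt 2 x_k(bt)}\,e^{-c_k^2/(2\tau)}.
\end{equation*}
Without the Gaussian factor $e^{-c_k^2/(2\tau)}$ the sum is precisely the truncated McKean martingale $\tilde Y^A_{bt,1/2}$ of Lemma \thv(tilde.1), which converges in $L^1$ to $Y$. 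The Gaussian correction is a tilt whose mean effect, computed by a Gaussian integral against the first-moment density of $z_k$, multiplies the limit by $\sqrt{1-\s_1^2 b} = \s_2\sqrt{1-b}$; combined with the prefactor $(1-b)^{-1/2}$ this produces exactly the constant $\s_2 C(a)$ appearing in the statement. The formula \eqv(M.Th2) then follows by dominated convergence applied to $\exp(-\sum_k u(\cdots))$, whose integrand is bounded by $1$.

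\textit{Main obstacle.} The delicate step is to extend the $L^1$ convergence of the truncated McKean martingale to its Gaussian-tilted version. Because $z_k$ has typical size $O(\sqrt t)$, the term $c_k^2/(2\tau)$ is genuinely $O(1)$ and cannot be dropped. I would address this by bounding the second moment of the tilted truncated sum uniformly in $t$, following the many-to-two computation of Proposition \thv(D.Prop1) but using the path bounds of Propositions \thv(Prop.p.4) and \thv(Prop.p.5) to confine the Brownian bridges of ancestors of extremal particles to tubes of width $s^\g$. Once uniform integrability is in hand, the remaining ingredients---the uniform smallness of $u(\tau,X_k)$ on the extremal window (which justifies $\log(1-x)\sim -x$), the negligibility of the non-extremal ancestors established in Section~2, and the interchange of limit and expectation---are routine and follow the template of \cite{ABK_E}.
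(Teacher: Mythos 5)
Your proposal follows the paper's route through all the preparatory steps: conditioning on $\FF_{bt}$, localising via Proposition \thv(P.Prop1), applying Proposition \thv(A.Prop1) with $a=\sqrt 2(\s_2-1)$, linearising $\log(1-u)$, and reducing the problem to a Gaussian-tilted version of the McKean martingale. Your arithmetic (including $(\s_2^2-1)(1-b)+2\s_1^2b=b(1+\s_1^2)$ and $\sqrt{1-\s_1^2b}=\s_2\sqrt{1-b}$) is correct, and you correctly identify the tilt $e^{-c_k^2/(2\tau)}$ as the genuine obstacle.

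The gap is in how you dispose of that obstacle. You propose to compute the ``mean effect'' of the tilt by a Gaussian integral against the first-moment density of $z_k$, and then to upgrade this via a uniform second-moment bound on the tilted truncated sum. But a uniform second-moment bound only gives uniform integrability, and the first-moment computation only identifies the \emph{expectation} of the tilted sum as $\s_2\sqrt{1-b}$; neither step identifies its limit \emph{in probability} as the specific random variable $\s_2\sqrt{1-b}\,Y$ (as opposed to some other mean-one limit, or no limit at all). The tilt reweights particles according to $z_k=O(\sqrt t)$, so realisation by realisation the tilted sum is \emph{not} a deterministic multiple of the untilted one, and some law-of-large-numbers mechanism is needed. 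The paper supplies it by a second conditioning at the intermediate time $b\sqrt t$ (this is the actual role of Propositions \thv(Prop.p.4) and \thv(Prop.p.5), which you cite but use only for path confinement): given $\FF_{b\sqrt t}$, the conditional expectation of the tilted sum over the descendants of each $x_i(b\sqrt t)$ is $(1+o(1))\,\s_2\,e^{-(1+\s_1^2)b\sqrt t+\sqrt 2 x_i(b\sqrt t)}e^{-\sqrt 2 y}$, while the ratio of conditional second to first moment is $O(e^{-(1-\s_1^2)b\sqrt t+O(t^{\g/2})})\to 0$. This conditional concentration collapses the tilted sum onto $\s_2 e^{-\sqrt 2 y}\tilde Y^{B}_{b\sqrt t,\g}$, and only then does Lemma \thv(tilde.1) deliver convergence to $\s_2 e^{-\sqrt 2 y}Y$. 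You should replace your global uniform-integrability argument with this two-scale conditional first/second moment computation; without it the limit of the tilted sum is not identified and the proof does not close.
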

 
\begin{proof}
Denote by $\{x_i(bt),1\leq i \leq n(bt)\}$ the particles of a BBM with variance $\s_1$ at time $bt$ and by $\FF_{bt}$ the $\s$-algebra generated this BBM. Moreover,  for $1\leq i\leq n(bt)$, let  $\{x_j^i((1-b)t),1\leq j \leq n_i((1-b)t)\}$ denote the particles of independent BBM with variance $\s_2$ at time $(1-b)t$.

Let us first observe that by  the analog of Theorem 1.1. of \cite{FZ_RW} for two-speed BBM\footnote{As pointed out in \cite{FZ_BM},  the arguments used  for branching random walks carry all over to BBM.}
we know that the maximum of our process is not too small, namely that 
for any $\e>0$, there exists $d<\infty$, such that 
\be\Eq(fz.1)
\P\left[\max_{1\leq k \leq n(t)}x_k(t)-\tilde m(t)\leq -d\right]\leq \e/2.
\ee
Therefore, 
\bea
\Eq(fz.2)
\P\left[ -d\leq \max_{1\leq k \leq n(t)}x_k(t)-\tilde m(t)\leq y\right]&\leq& 
\P\left[\max_{1\leq k \leq n(t)}x_k(t)-\tilde m(t)\leq y\right]\\\nonumber
&\leq&
\P\left[-d\leq \max_{1\leq k \leq n(t)}x_k(t)-\tilde m(t)\leq y\right]
\\\nonumber
&+&
\P\left[-d\leq \max_{1\leq k \leq n(t)}x_k(t)-\tilde m(t)\leq y\right]
+\e/2
\eea
On the other hand, by Proposition \thv(P.Prop1), we have that there exists $A<\infty$, such  that 
\bea
\Eq(fz.4)
&&\P\left[\forall_{1\leq k \leq n(t)}\{-d\leq x_k(t)-\tilde m(t)\leq y
\}\cap\{x_k\in\GG_{bt,A,\frac 12}\}\right]\\\nonumber
&&\leq
\P\left[ -d\leq \max_{1\leq k \leq n(t)}x_k(t)-\tilde m(t)\leq y\right]
\\\nonumber&&=\P\left[\forall_{1\leq k \leq n(t)}\{-d\leq x_k(t)-\tilde m(t)\leq y
\}\cap\{x_k\in\GG_{bt,A,\frac 12}\}\ \right]\\\nonumber
&&+\P\left[\forall_{1\leq k \leq n(t)}\{-d\leq x_k(t)-\tilde m(t)\leq y
\}\cap\{x_k\in\GG_{bt,A,\frac 12}\}\right]\\\nonumber
&&\leq\P\left[\forall_{1\leq k \leq n(t)}\{-d\leq x_k(t)-\tilde m(t)\leq y
\}\cap\{x_k\in\GG_{bt,A,\frac 12}\}\right]+\e/2
\eea
Combining \eqv(fz.2) and \eqv(fz.4), we have that
\bea\Eq(fz.5)
&&\P\left[\forall_{1\leq k \leq n(t)}\{-d\leq x_k(t)-\tilde m(t)\leq y
\}\cap\{x_k\in\GG_{bt,A,\frac 12}\}\right]
\nonumber\\
&& \leq \P\left[\forall_{1\leq k \leq n(t)}\{-d\leq x_k(t)-\tilde m(t)\leq y
\}\right]
\\\nonumber &&\leq
\P\left[\forall_{1\leq k \leq n(t)}\{-d\leq x_k(t)-\tilde m(t)\leq y
\}\cap\{x_k\in\GG_{bt,A,\frac 12}\}\right]+\e
\eea 
Thus we obtain 
\bea\Eq(M.1)\nonumber
&&\P\left[\max_{1\leq k \leq n(t)}x_k(t)-\tilde m(t)\leq y\right]\\
&&= \P\left[\max_{1\leq i \leq n(bt)}\max_{1\leq j \leq n_i((1-b)t)}x_i(bt)+x_j^i((1-b)t)-\tilde m(t)\leq y\right]\nonumber\\
&&=\E\left[\prod_{1\leq i\leq n_i(bt) }\P\left[\max_{1\leq j \leq n_i((1-b)t)}x_j^i((1-b)t)\leq \tilde m(t)-x_i(bt)+y \left \vert \right. \FF_{bt}\right]\right]\nonumber\\\nonumber
&&\leq\E\Biggl[\prod_{\stackrel{1\leq i\leq n(bt) }{x_i\in\GG_{bt,A,\frac 12}}}
\P\left[\max_{1\leq j \leq n_i((1-b)t)}\s_2^{-1}x_j^i((1-b)t) \leq \s_2^{-1}\left(\tilde m(t)-x_i(tb)+y \right)\left \vert \right.  \FF_{tb}\right]\Biggr]\\&&\quad +\e.
\eea
Of course the corresponding lower bound holds without the $\e$.

Observe that the last  probability in \eqv(M.1)
is equal to 
\be\Eq(M.2)
1-
\P\left[\max_{1\leq j \leq n_i((1-b)t)} \bar{x}_j^i((1-b)t) > \s_2^{-1}\left(\tilde m(t)-x_i(tb)+y \right)\left \vert \right.  \FF_{tb}\right],
\ee
 where $\bar{x}_j^i((1-b)t)$ are the particles of a standard BBM. Using Proposition \thv(A.Prop1) for $(1-b)t$ and $u(t,x)=\P\left(\max \bar{x}_j^i(t)>x\right)$, and setting 
\be\Eq(approx.1)
C_t(x)\equiv e^{\sqrt 2 x+x^2/2t}t^{1/2} u(t,x+\sqrt 2 t),
\ee
we can write the probabilities  in the last line of \eqv(M.2)
as
\bea
\Eq(approx.2)
&&u\left((1-b)t, \s_2^{-1}(\tilde m(t) -x_i(bt)+y)\right)\\\nonumber
&&=C_{(1-b)t} \left(\s_2^{-1}(\tilde m(t) -x_i(bt)+y)-t\sqrt 2(1-b))\right)
\\\nonumber &&\quad\times e^{-\sqrt{2}\left(\frac{\tilde m(t)-x_i(bt)+y}{\s_2}-\sqrt{2}(1-b)t\right)}
e^{-\frac{1}{2(1-b)t}\left(\frac{\tilde m(t)-x_i(bt)+y}{\s_2}-\sqrt{2}(1-b)t\right)^2}((1-b)t)^{-1/2}
\eea
Now all the $x_i(bt)$ that appear are of the form 
$x_i(bt) =\sqrt{2}\s_1^2b t + O(\sqrt t)$,
so that 
\be\Eq(approx.3)
C_{(1-b)t} \left(\s_2^{-1}(\tilde m(t) -x_i(bt)+y)-\sqrt{2}(1-b)t)\right)
=C_{(1-b)t}( a(1-b)t +O(\sqrt t)),
\ee 
with (using \eqv(norm.1))
\be \Eq(constant.a)
a\equiv \frac{1}{1-b}\left(\frac{\sqrt {2} -\sqrt{2}\s_1^2b  }{\s_2}-\sqrt{2}(1-b)\right)=\sqrt{2}(\s_2-1),
\ee
But then,  by Proposition \thv(A.Prop1), 
\be\Eq(approx.4)
\lim_{t\uparrow\infty} C_{(1-b)t} \left(\s_2^{-1}(\tilde m(t) -x_i(bt)+y)-\sqrt{2}(1-b)t)\right)
=C(a), 
\ee
with uniform convergence for all $i$ appearing in \eqv(M.1)  and  $C(a)$ is the constant 
given by \eqv(constant.1).
Thus we can rewrite the expectation in \eqv(M.1) as
\bea\Eq(M.2.1)\nonumber
&&\E\Biggl[\prod_{\stackrel{1\leq i\leq n(bt) }{x_i\in \GG_{bt,A,1/2}}}
\P\left[\max_{1\leq j \leq n_i((1-b)t)}\s_2^{-1}x_j^i((1-b)t) \leq \s_2^{-1}\left(\tilde m(t)-x_i(tb)+y \right)\left \vert \right.  \FF_{tb}\right]\Biggr]
\\\nonumber
&&=\E\Biggl[\prod_{\stackrel{1\leq i\leq n(bt) }{x_i\in\GG_{bt,A,1/2}}}
\Bigl\{1-C(a)
e^{-\sqrt{2}\left(\sfrac{\tilde m(t)-x_i(bt)+y}{\s_2}-\sqrt{2}(1-b)t\right)}\nonumber\\
& &\qquad \times
 e^{-\frac{1}{2(1-b)t}\left(\sfrac{\tilde m(t)-x_i(bt)+y}{\s_2}-\sqrt{2}(1-b)t\right)^2}((1-b)t)^{-1/2}(1+o(1))\Bigr\}\Biggr].
 \eea

This is equal to
\be\Eq(M.2.1.1)
\E\Biggl[\prod_{\stackrel{1\leq i\leq n(bt) }{x_i\in\GG_{bt,A,1/2}}}
 \Bigl\{1-C(a)((1-b)t)^{-1/2}
e^{(1-b)t-\sfrac{(\tilde m(t)+y-x_i(b t))^2}{2(1-b)t \s_2^2} }
(1+o(1))\Bigr\}\Biggr].
\ee 
Using that $x_i(bt)-\sqrt{2}\s_1^2tb\in[-A \sqrt{t}, A \sqrt{t}]$ we have the uniform bounds
\be\Eq(M.2.1.3)
\exp\left((1-b)t-\sfrac{(\tilde m(t)+y-x_i(bt))^2}{2(1-b)t \s_2^2} \right) 
\leq \exp\left((1-\s_2^2)bt+\log t+A \sqrt t\right).
\ee
Observe that the right-hand side of Eq. \eqv(M.2.1.3)$\to 0$ as $t\uparrow\infty$, since $\s_2^2>1$. 
Hence \eqv(M.2.1.1) is equal to 
\be\Eq(M.2.1.1.1)
\E\Biggl[\prod_{\stackrel{1\leq i\leq n(bt) }{x_i\in\GG_{bt,A,1/2}}}
 \exp\left(-C(a)((1-b)t)^{-1/2}
e^{(1-b)t-\sfrac{(\tilde m(t)+y-x_i(b t))^2}{2(1-b)t \s_2^2} }
(1+o(1))\right)\Biggr].
\ee 
Expanding the square in the exponent in the last line and keeping only the relevant terms yields
\be\Eq(fz.400)
\sqrt 2 y+ t\s_2^2(1-b)+2\s_1^2bt -\sqrt 2x_i(bt)+ \frac{\left(\sqrt 2t\s_1^2b-x_i(bt)\right)^2}{2(1-b)\s_2^2t}.
\ee
The terms up to the last one would nicely combine to produce the McKean martingale as coefficient of $C(a)$. However, the last terms are of order one and cannot be neglected. To deal with them,  we split the process at time $b\sqrt t$. 
We write somewhat abusively $x_i(bt)=x_i(b\sqrt t) +x_{l}^{(i)}(b(t-\sqrt t))$, where we understand that $x_i(b\sqrt t)$ is the ancestor
at time $b\sqrt t$ of 
the particle that at time $t$ is labeled $i$ if we think backwards from time $t$, while the labels of the particles at time $b\sqrt t$ run only over the different ones, i.e. up to $n(b\sqrt t)$, if we think in the forward direction. No confusion should occur if this is kept in mind.

Using Proposition \thv(Prop.p.4) and Proposition \thv(Prop.p.5) we can further localise the path of the particle.
Recall the definition of $\GG_{s,A,\g}$ and $\TT_{r,s}$, 
we rewrite \eqv(M.2.1.1.1), up to a term of order $\e$,  as 
 \bea \Eq(M.2.1.2)
&& \E\Biggl[\prod_{\stackrel{1\leq i\leq n(b\sqrt t) }{x_i\in \GG_{b\sqrt{t},B,\g}}}\E\Biggl[\prod_{\stackrel{1\leq l\leq n^{(i)}_l(b(t-\sqrt{t})) }{ x_i\in\GG_{bt,A,\frac{1}{2}};x_l^{(i)}\in \TT_{b(t-\sqrt{t}),r}}}
 \exp\Biggl(-C(a)((1-b)t)^{-1/2}\\\nonumber
&&\quad\times \exp\left((1-b)t-\sfrac{(\tilde m(t)+y-x_i(b \sqrt t)-
x_l^{(i)}(b(t-\sqrt t)))^2}{2(1-b)t \s_2^2} \right)(1+o(1))\Biggr)
\big \vert \FF_{\sqrt tb}\Biggr]\Biggr].
\eea
Using that  $x_i(b\sqrt t)+x_l^{(i)}(b(t-\sqrt{t}))-\sqrt{2}\s_1^2tb\in[-A \sqrt{t}, A \sqrt{t}]$ and $\tilde m=\sqrt{2}-\frac{1}{2\sqrt 2}\log t$, 
we can re-write the terms multiplying $C(a)$ in \eqv(M.2.1.2) as
\bea\Eq(M.2.1.4)\nonumber
&& \exp\Big(-(1+\s_1^2)bt+\sqrt 2 (x_i(b\sqrt t)+x_l^{(i)}(b(t-\sqrt t)))
-\frac{1}{2}\log(1-b)-\sqrt{2}y
\\\nonumber && -\sfrac{(x_i(b\sqrt t)+x_l^{(i)}(b(t-\sqrt t))-\sqrt 2 \s_1^2bt)^2}{2(1-b)\s_2^2t}+O(1/\sqrt{t})\Big)\nonumber\\
&&\equiv E(x_i,x_l^{(i)})=E(x_i(b\sqrt t),x_l^{(i)}(b(t-\sqrt t)))=E(x_i(b\sqrt t), x_i(bt)-x_i(b\sqrt t)).\nonumber\\
\eea
Now  \eqv(M.2.1.2) takes the form
\be\Eq(M.2.1.5)
 \E\Biggl[\prod_{\stackrel{1\leq i\leq n(b\sqrt t)}{x_i\in\GG_{b\sqrt t,B,\g}} } \E\Biggl[\exp\Big\{-\sum_{\stackrel{1\leq l\leq n^{(i)}_l(b(t-\sqrt{t})) }{x_i\in\GG_{bt,A,\frac{1}{2}};x_l^{(i)}\in \TT_{r,b(t-\sqrt{t})}}}
 C(a) 
  E(x_i,x_l^{(i)})(1+o(1))\Bigr\}\big \vert\FF_{b\sqrt t}\Bigr]\Biggr].
\ee
Using the  inequalities
\be\Eq(M.2.1.6)
1-x\leq e^{-x}\leq 1-x+\frac{1}{2}x^2,\quad x>0,
\ee
 for 
 \be \Eq(M.2.1.7)
 x= \sum_{\stackrel{1\leq l\leq n^{(i)}_l(b(t-\sqrt{t})) }{x_i\in \GG_{bt,A,\frac12};x_l^{(i)}\in \TT_{r,b(t-\sqrt{t})}}}
 C(a)  
   E(x_i,x_l^{(i)})(1+o(1))
 \ee
we are able to bound \eqv(M.2.1.5) from below and above.  First, 
  \be\Eq(M.2.1.8)
  \E[x^2\vert \FF_{b\sqrt t}]\leq
  e^{-2(1+\s_1^2)b\sqrt t+2\sqrt{2}x_i(b\sqrt{t})-2\sqrt 2 y}\E\left[\left(Y_{b(t-\sqrt{t})}^{A}\right)^2\right],
  \ee
  where $Y_{b(t-\sqrt{t})}^{A}$ is the truncated McKean martingale defined in \eqv(D.6). Note that its second moment is bounded by $D_2(r)$ (see  \eqv(ui.2)).
  Second,  computing  the conditional expectation given $\FF_{b\sqrt t}$ yields  
  \bea\Eq(M.2.1.8)
\E[x\vert \FF_{b\sqrt t}]&=&  \E\Big[\sum_{\stackrel{1\leq l\leq n^{(i)}_l(b(t-\sqrt{t})) }{x_i\in \GG_{bt,A,\frac12};x_l^{(i)}\in \TT_{r,b(t-\sqrt{t}}}}
 C(a)  E(x_i,x_l^{(i)})(1+o(1))\big \vert\FF_{b\sqrt t}\Big]\\\nonumber
 &\leq& e^{b(\s_1^2t-\sqrt{t})-\sqrt 2 y} \int_{K_t-A\sqrt{t}}^{K_t+{A\sqrt{t}}}e^{\sqrt{2}(z+x_i(b\sqrt t))-\frac{(z+x_i(b\sqrt t)-\sqrt 2\s_1^2bt)^2}{2\s_2^2(1-b)t}}\sfrac{ e^{-z^2/2\s_1^2b(t-\sqrt t)}   dz}{\sqrt{2\pi \s_1^2b(t-\sqrt{t})}},
  \eea                                                                                        
 where $K_t=\sqrt{2}tb\s_1^2-x_i(b\sqrt{t})$. Performing the change of variables $z=w+K_t$ this is equal to
 \bea\Eq(M.2.1.9)
 &&e^{-(1+\s_1^2)b\sqrt{t}+\sqrt{2}x_i(bt)-\frac{1}{2}\log (1-b)-\sqrt 2 y}\int_{-A\sqrt t}^{A\sqrt t}e^{-\frac{w^2}{2\s_1^2b(t-\sqrt{t})}-\frac{w^2}{2\s_2^2(1-b)t}}\sfrac{dw}{\sqrt{2\pi\s_1^2b(t-\sqrt t)}}(1+o(1))\nonumber\\
 &=&e^{-(1+\s_1^2)b\sqrt{t}+\sqrt{2}x_i(bt)-\frac{1}{2}\log (1-b)-\sqrt 2 y} \left(\sfrac{\s_2^2(1-b)}{ 1-\s_1^2b/\sqrt t }\right)^{1/2}\int_{-A\sqrt{t}}^{A\sqrt t}e^{-w^2/2t}\sfrac{dw}{\sqrt{2\pi t}}(1+o(1))\nonumber\\
 &=&e^{-(1+\s_1^2)b\sqrt{t}+\sqrt{2}x_i(bt)-\sqrt 2 y} \left(\sfrac{\s_2^2  }{ 1-\s_1^2b/\sqrt t }\right)^{1/2}(1-\e)(1+o(1)),
 \eea
 where $o(1)\leq O(t^{\g-1})$.
 Using Lemma \thv(simplefact) together with the independence of the Brownian bridge from its endpoint, we obtain that the right hand side of  \eqv(M.2.1.9) multiplied by an  additional factor $(1-\e)$ is also a lower bound. Comparing this to \eqv(M.2.1.8), one sees that 
  \be\Eq(M.2.1.8)
 \frac{\E[x^2\vert \FF_{b\sqrt t}]}{\E[x\vert \FF_{b\sqrt t}]}\leq
   D_2(r) e^{-(1+\s_1^2)b\sqrt t+\sqrt{2}x_i(b\sqrt{t})} \leq C e^{-(1-\s_1^2)b\sqrt t+0(t^{\g/2})},
  \ee
  which tends to zero uniformly as $t\uparrow \infty$.
 Thus the second moment term is negligible. 
 Hence we only have to control
 \bea\Eq(M.2.1.10)
&& \E\Biggl[\prod_{\stackrel{1\leq i\leq n(b\sqrt t)}{x_i\in\GG_{b\sqrt t,B,\g}} }\Big(1-C(a)e^{-(1+\s_1^2)b\sqrt{t}+\sqrt{2}x_i(bt)-\sqrt 2 y} \left(\sfrac{\s_2^2  }{ 1-\s_1^2b/\sqrt t }\right)^{1/2}\Big)\Biggr]\nonumber\\
 &=&\E\Biggl[\exp\Biggl(-\sum_{\stackrel{1\leq i\leq n(b\sqrt t)}{x_i\in\GG_{b\sqrt t,B,\g}} }C(a)e^{-(1+\s_1^2)b\sqrt{t}+\sqrt{2}x_i(bt)-\sqrt 2 y} \left(\sfrac{\s_2^2  }{ 1-\s_1^2b/\sqrt t }\right)^{1/2} \Biggr)(1+o(1))\Biggr]\nonumber\\
 &=& \E\Biggl[\exp\left(-C(a)\left(\sfrac{\s_2^2  }{ 1-\s_1^2b/\sqrt t }\right)^{1/2}e^{-\sqrt 2 y} \tilde Y_{b\sqrt t,\g}^B\right)(1+o(1))\Biggr]
 \eea
where 
\be
\Eq(fz.40)
\tilde Y^{B}_{b\sqrt t,\g}=\sum_{i=1}^{n(b\sqrt t)}e^{-(1+\s_1^2)b\sqrt t+\sqrt{2}x_i(b\sqrt t)}\1_{x_i(b\sqrt{t})-\sqrt 2\s_1^2b\sqrt t\in[-Bt^{\g/2},Bt^{\g/2}]}.
\ee 
Now from Lemma \thv(tilde.1),  $\tilde Y^{B }_{b\sqrt t,\g}$ converges in probability and in $L^1$ to the random variable $Y$, when we let first $t$ and then $B$ tend to  infinity. Since $Y^{B}_{b\sqrt t,\g}\geq 0$ and $C(a)>0$, it follows 
\bea\Eq(final.1)
&&\lim_{B\uparrow\infty}\liminf_{t\uparrow\infty} \E\left[\exp\left(-C(a)\left(\sfrac{\s_2^2}{1-\s_1^2b/\sqrt t}\right)^{1/2}\tilde Y^{B}_{b\sqrt t,\g} e^{-\sqrt{2}y}\right)\right]\nonumber\\
&=&
\lim_{B\uparrow\infty}\limsup_{t\uparrow\infty} \E\left[\exp\left(-\s_2C(a)\tilde Y^{B}_{b\sqrt t,\g} e^{-\sqrt{2}y}\right)\right]\nonumber\\
&=&
\E\left[\exp\left(-\s_2C(a)Ye^{-\sqrt{2}y}\right)\right].
\eea
Finally, letting $r$ tend to $+\infty$, all the $\e$-errors (that are still present implicitly, vanish. 
This concludes the proof of Theorem \thv(M.Th1).
\end{proof}

\section{Existence of the limiting process}

The following existence theorem is the basic step in the proof of Theorem \thv(speed.1).

\begin{theorem}\TH(E.Th1) Let $\s_1<\s_2$. Then, 
 the point processes $\EE_t=\sum_{k\leq n(t)}\d_{x_k(t)-\tilde m(t)}$ converges in law to a 
 non-trivial point process $\EE$.
\end{theorem}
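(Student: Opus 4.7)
The plan is to prove convergence in law by establishing convergence of the Laplace functionals
\be
\Psi_t(\phi) \equiv \E\left[\exp\left(-\sum_{k\leq n(t)}\phi\bigl(x_k(t)-\tilde m(t)\bigr)\right)\right]
\ee
for every $\phi\in C_c^+(\R)$. Since the collection of such $\phi$ is convergence-determining for point processes on $\R$, it suffices to exhibit a limit $\Psi_\infty(\phi)\in(0,1]$ for each fixed $\phi$. The idea is to mimic exactly the proof of Theorem \thv(M.Th1), replacing the indicator $\1_{\{x\leq y\}}$ by the function $1-e^{-\phi}$ and invoking Proposition \thv(A.Prop1) for the F-KPP equation with the corresponding initial condition.

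First I would decompose the BBM at time $bt$: writing $x_k(t)=x_i(bt)+x_j^i((1-b)t)$, conditioning on $\FF_{bt}$, and using that the subtrees $\{x_j^i((1-b)t)\}_j$ are independent BBMs of variance $\s_2^2$, we obtain
\be
\Psi_t(\phi) = \E\Biggl[\prod_{i\leq n(bt)} G_t\bigl(x_i(bt)-\tilde m(t)\bigr)\Biggr],\qquad
G_t(u)=\E\Bigl[\exp\Bigl(-\sum_j \phi\bigl(\s_2\bar x_j((1-b)t)+u\bigr)\Bigr)\Bigr],
\ee
with $\{\bar x_j\}$ the particles of a standard BBM. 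By Proposition \thv(P.Prop1), up to an error $\e$ uniform in $\phi$, one may restrict the product to indices $i$ with $x_i(bt)\in \GG_{bt,A,1/2}$, and by Propositions \thv(Prop.p.4) and \thv(Prop.p.5) one may further impose the path-tube constraints used in Theorem \thv(M.Th1). For such $i$, the relevant argument satisfies $u/\s_2+\sqrt 2(1-b)t = at+o(t)$ with $a=\sqrt 2(\s_2-1)$, exactly as in \eqv(constant.a), so we are in the regime where Proposition \thv(A.Prop1) applies to $v_\phi(s,x)\equiv 1-G_s(x)$. Indeed $v_\phi$ solves the F-KPP equation \eqv(fkpp) with initial data $1-e^{-\phi(x)}$, which is compactly supported, so conditions (i)--(iv) of Proposition \thv(A.Prop1) are trivially met. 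This yields, uniformly in the admissible $i$,
\be
1-G_t\bigl(x_i(bt)-\tilde m(t)\bigr) = C_\phi(a)\,t^{-1/2}\exp\Bigl((1-b)t-\sfrac{(\tilde m(t)-x_i(bt))^2}{2(1-b)\s_2^2 t}\Bigr)(1+o(1))
\ee
with a strictly positive constant $C_\phi(a)$ obtained from Lemma \thv(fz.6) applied to the initial data $1-e^{-\phi}$.

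From here the argument is essentially a rerun of the last part of the proof of Theorem \thv(M.Th1). Writing $1-x=e^{-x}(1+O(x^2))$, expanding the square in the exponent as in \eqv(fz.400), splitting the process at time $b\sqrt t$ via $x_i(bt)=x_i(b\sqrt t)+x_l^{(i)}(b(t-\sqrt t))$, and performing the Gaussian integration over the second piece exactly as in \eqv(M.2.1.9) produces
\be
\Psi_t(\phi)=\E\Bigl[\exp\bigl(-\s_2 C_\phi(a)\,\tilde Y^{B}_{b\sqrt t,\g}\bigr)\Bigr](1+o(1)) + O(\e),
\ee
where $\tilde Y^{B}_{b\sqrt t,\g}$ is the truncated McKean martingale from \eqv(fz.40). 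Lemma \thv(tilde.1) guarantees that $\tilde Y^{B}_{b\sqrt t,\g}\to Y$ in $L^1$ as $t\uparrow\infty$ followed by $B\uparrow\infty$, and bounded convergence then yields
\be
\lim_{t\to\infty}\Psi_t(\phi) = \E\bigl[\exp(-\s_2 C_\phi(a)\,Y)\bigr].
\ee
Letting the auxiliary parameters $r,A,B\to\infty$ disposes of the $\e$ errors. As the right-hand side lies in $(0,1]$ and depends continuously on $\phi$, the Laplace functional converges, proving that $\EE_t\to \EE$ in law for some point process $\EE$.

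The main obstacle is not the passage to a general test function per se, which is a soft extension of Proposition \thv(A.Prop1), but rather controlling the second-moment contribution arising from the bound $1-x\leq e^{-x}\leq 1-x+x^2/2$ applied to a \emph{sum} over clusters at time $b\sqrt t$. As in \eqv(M.2.1.8), this requires showing $\E[x^2\,|\,\FF_{b\sqrt t}]/\E[x\,|\,\FF_{b\sqrt t}]\to 0$, which in turn reduces to the uniform $L^2$-bound on the truncated McKean martingale established in Proposition \thv(D.Prop1); the cluster interaction terms, involving particles sharing a common ancestor in the interval $[b\sqrt t,bt]$, must be seen to be subleading, and this uses the $\g>1/2$ path localisation from Proposition \thv(Prop.p.5) in an essential way. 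Once this second-moment control is in place, the argument is linear in $\phi$ at the level of the exponent, and existence of $\EE$ follows.
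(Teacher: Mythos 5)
Your overall strategy---reducing to Laplace functionals, conditioning at time $bt$, applying the F-KPP asymptotics of Proposition \thv(A.Prop1) to the initial datum built from $1-e^{-\phi}$, and then rerunning the computation of Theorem \thv(M.Th1) to produce $\E[\exp(-\s_2 C_\phi(a)Y)]$---is exactly the paper's. However, there is a genuine gap at the step where you claim that conditions (i)--(iv) of Proposition \thv(A.Prop1) are ``trivially met'' because $1-e^{-\phi}$ is compactly supported. Condition (iii) requires $\int_x^{x+N}u(0,y)\,\mbox{d}y>v>0$ for all $x\leq -M$, i.e.\ the initial datum must stay bounded away from zero far to the \emph{left}; a compactly supported initial datum violates this, and without it the Bramson sandwich \eqv(A.4.2) (Proposition 4.3 of \cite{ABK_E}) underlying Proposition \thv(A.Prop1) is not available, nor is the strict positivity of the limiting constant. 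This is precisely why the paper does not work with $1-e^{-\bar\phi}$ directly: it introduces the truncated datum $g_\d(z)=e^{-\bar\phi(-z)}\1_{\{-z\s_2\leq\d\}}$, which vanishes far to the left (so that $u_\d(0,\cdot)=1$ there and (iii) holds), and correspondingly decomposes $\Psi_t=\Psi_t^{<\d}+\Psi_t^{>\d}$ on the event $\{\max\EE_t\leq\d\}$.

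Your write-up therefore also misses the two pieces of the argument attached to this truncation: (a) $\Psi_t^{>\d}$ must be shown to vanish as $\d\uparrow\infty$, which uses the convergence of the law of the maximum (Theorem \thv(M.Th1)); and (b) one must prove that $\lim_{\d\uparrow\infty}\Psi^{<\d}(\phi)$ exists and is strictly smaller than $1$, which the paper obtains from monotonicity and boundedness in $\d$ together with the bound $\Psi_t^{<\d}(\phi)\leq\E[e^{-\phi(\max\EE_t)}\1_{\{\max \EE_t\leq\d\}}]$ and Theorem \thv(M.Th1). Your remaining steps (the second-moment control via the truncated McKean martingale, the split at time $b\sqrt t$, and the appeal to Lemma \thv(tilde.1)) do match the paper and are correctly identified as the technical core; but as written the proof does not go through until the application of Proposition \thv(A.Prop1) is repaired by the $\d$-truncation.
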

\begin{proof}
 It suffices to show that, for $\phi\in\CC_c(\R)$ positive, the Laplace functional
\be
\Psi_t(\phi)=\E\left[\exp\left(-\int\phi(y)\EE_t({d}y)\right)\right],
\ee
of the processes $\EE_t$  converges. First observe that this limit cannot be zero, since the maximum of the time inhomogeneous BBM converges by Theorem \thv(M.Th1). As for standard BBM (see e.g. \cite{ABK_E}), it follows
\be
\lim_{N\to\infty}\lim_{t\to\infty}\P\left[\EE_t(B)>N\right]=0,\;\mbox{for any bounded}\, B\subset \R,
\ee
which implies the locally finiteness of the limiting point process. As in 
\cite{ABK_E} we decompose  
\be
\Psi_t(\phi)= \Psi_t^{<\d}(\phi)+\Psi_t^{>\d}(\phi),
\ee
where
\bea
\Psi_t^{<\d}(\phi) &=& \E\left[\exp\left(-\int\phi(y)\EE_t({d}y)\right)\1_{\max \EE_t\leq\d}\right]\nonumber\\
\Psi_t^{>\d}(\phi) &=& \E\left[\exp\left(-\int\phi(y)\EE_t({d}y)\right)\1_{\max \EE_t>\d}\right].
\eea
Here we write shorthand $\max \EE_t\leq \d$ for 
$\max_{k\leq n(t)} (x_k(t)-m(t)\leq \d$.
By Theorem \thv(M.Th1) we have
\be
\limsup_{\d\to\infty}\limsup_{t\to\infty}\Psi_t^{>\d}(\phi)
\leq \limsup_{\d\to\infty}\limsup_{t\to\infty}\P[\max \EE_t>\d]=0.
\ee
Hence it remains to analyse the behaviour of $\Psi_t^{<\d}(\phi)$. We claim that
\be
\lim_{\d\to\infty}\lim_{t\to\infty}\Psi_t^{<\d}(\phi)=\Psi(\phi)
\ee
exists and is strictly smaller than $1$. To see this set
\be
\bar{\phi}(z)=\phi(\s_2 z)
\ee
and
\be
g_\d(z)= e^{-\bar{\phi}(-z)}\1_{\{-z\s_2\leq\d\}}.
\ee
Moreover, define
\be\Eq(E.14)
u_\d(t,z)=1-\E\left[\prod_{j\leq n(t)}g_\d(z-\bar{x}_j(t))\right] .
\ee
where $\{\bar{x}_j(t),1\leq j\leq n(t)\}$ are the particles of a standard BBM with variance $1$. We observe that by \cite{McKean} $u_\d(t,x)$ solves the
 F-KPP equation \eqv(fkpp) with initial condition $u_\d(0,x)=1-g_\d(x)$. Next we verify Assumptions (i)-(iv) of Proposition \thv(A.Prop1). (i) is clear. Moreover, $ g_\d(x)=1$ for $x$ large enough in the positive , and $ g_\d(x)=0$ for $-x$ large enough, so that Conditions (ii)-(iv) of Proposition \thv(A.Prop1) are satisfied.
 Now
\bea
\Psi_t^{<\d}(\phi)
&=&\E\left[\prod_{i\leq n(bt)}\E\left[\prod_{x_j^i\leq n_i((1-b)t)} g_\d((\tilde m(t)-x_i(bt)-x_{j}^i((1-b)t))/\s_2)\big | \FF_{bt}\right]\right]\nonumber\\
&=&\E\left[\prod_{i\leq n(bt)}\E\left[\prod_{\bar{x}_j^i\leq n_i((1-b)t)} g_\d((\tilde m(t)-x_i(bt))/\s_2-\bar{x}_{j}^i((1-b)t))\big | \FF_{bt}\right]\right],\nonumber\\
\eea
where for each $i$,  $\bar{x}_{j}^i$ are the particles of iid standard BBMs. By Proposition \thv(A.Prop1) and the same calculations as in the proof Theorem \thv(M.Th1) we have that this converges, as $t\to\infty$, to
\be\Eq(E.8)
\E\left[\exp\left(-\s_2C(a,\phi,\delta)Y\right)\right],
\ee
where $C(a,\phi,\delta)$ is the constant that appears in Lemma \thv(fz.6), with initial condition $g_\d(z)$, i.e.
\be
C(a,\phi,\delta)=\lim_{t\to\infty} \frac1{\sqrt {2\pi}}\int_0^\infty 
u_\d(t, z +\sqrt{2}t)e^{(\sqrt{2}+a) z-a^2t/2}(1-e^{-2 za}){d}z,
\ee
where $ a=\sqrt{2}(\s_2-1)$ and
 $u_\d$ is the solution to the F-KPP equation \eqv(fkpp) with initial condition $u_\d(0,z)=1-e^{-\bar{\phi}(z)}\1_{\{z\s_2\leq \d\}}$. Thus the limit $ \lim_{t\to\infty}\Psi_t^{<\d}(\phi)=\Psi^{<\d}(\phi)$ exists. 
 The limit $\d\uparrow\infty$ then exists by the same argument as in the proof
 of Theorem 3.1 of \cite{ABK_E}:
 the function
\be
\d\to\Psi^{<\d}(\phi)
\ee
is increasing and bounded, Moreover,  the maximum is an atom of $\EE_t$ and $\phi$  is nonnegative, and so 
\be\Eq(E.1)
\Psi_t^{<\d}(\phi)\leq \E\left[\exp\left(-\phi(\max\EE_t)\right)\1_{\{\max \EE_t\leq \d\}}\right]
\ee
The limit as $t\to\infty$ and $\d\to\infty$ of the right hand side of \eqv(E.1) exists by Theorem \thv(M.Th1).  Hence 
\be\Eq(E.11)
\Psi(\phi)= \lim_{\d\to\infty}\Psi^\d(\phi)<1,
\ee
by monotone convergence.
This implies the existence of the limiting process.
\end{proof}

\begin{proposition}\TH(E.Prop1)
 Let $v(t,x)$, $v_\d(t,x)$ be solutions of the F-KPP equation with initial data $v(0,x)=1-e^{-\bar\phi(-x)}$ and $v_\d(0,x)=1-e^{-\bar\phi(-x)}\1_{\{-x\s_2\leq \d\}}$ respectively. Set
\be
C(a,\phi.\d)=\lim_{t\to\infty}\frac{1}{\sqrt{2\pi}}\int_0^\infty v_\d(t,z+\sqrt{2}t)e^{(\sqrt{2}+a)z-a^2t/2}\left(1-e^{-2az}\right){d}z
\ee
Then $\lim_{\d\to\infty}C(a,\phi,\d)$ exists and is given by
\be\Eq(E.9)
C(a,\phi)=\lim_{\d\to\infty}C(a,\phi,\d)=\lim_{t\to\infty}\frac{1}{\sqrt{2\pi}}\int_0^\infty v(t,z+\sqrt{2}t)e^{(\sqrt{2}+a)z-a^2t/2}{d}z.
\ee
Moreover,
\be\Eq(E.7)
\lim_{t\to\infty}\Psi_t(\phi)=\E\left[\exp\left(-\s_2C(a,\phi)Y\right)\right].
\ee
\end{proposition}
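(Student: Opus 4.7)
The proposition contains two assertions: (a) the limit $\lim_{\d\to\infty}C(a,\phi,\d)$ exists and coincides with the integral on the right of \eqv(E.9); (b) the Laplace-functional identity \eqv(E.7). The plan is to prove them in that order, since (b) follows soft-ly from (a) once the work of Theorem~\thv(E.Th1) and Theorem~\thv(M.Th1) is in hand.

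For (a), the strategy is monotonicity in $\d$. Observe that $v_\d(0,x)=1-e^{-\bar\phi(-x)}\1_{\{-x\s_2\leq\d\}}$ is non-increasing in $\d$, because the indicator is non-decreasing. The comparison principle for the F-KPP equation then yields $v_\d(t,x)\searrow v(t,x)$ pointwise as $\d\to\infty$. For each fixed $\d$, the initial datum $1-g_\d$ satisfies hypotheses (i)--(iv) of Proposition~\thv(A.Prop1), so Lemma~\thv(fz.6) combined with Proposition~\thv(A.Prop1) shows that $C(a,\phi,\d)$ is a well-defined positive finite constant. The monotonicity of $v_\d$ propagates to monotonicity of $C(a,\phi,\d)$ in $\d$, so existence of the limit is immediate. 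To identify the limit, I would first pass to the ``fixed-$r$'' representation of Lemma~\thv(fz.6), send $\d\to\infty$ at the level of the integrand by monotone convergence (using the Bramson bound \eqv(A.4) on the linearized F-KPP equation as a $\d$-independent dominating envelope), and finally send $r\to\infty$ by Proposition~\thv(A.Prop1) applied to $v$ itself. The fact that the $(1-e^{-2az})$ factor disappears in the final formula is handled by exploiting the precise critical-tail asymptotics of $v(t,z+\sqrt 2 t)$ furnished by Proposition~\thv(A.Prop1).

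For (b), I would combine the decomposition $\Psi_t(\phi)=\Psi_t^{<\d}(\phi)+\Psi_t^{>\d}(\phi)$ from the proof of Theorem~\thv(E.Th1) with the convergence $C(a,\phi,\d)\to C(a,\phi)$ established in (a). The computations already performed in the proof of Theorem~\thv(E.Th1) yield
\begin{equation}
\lim_{t\to\infty}\Psi_t^{<\d}(\phi)=\E\!\left[\exp\!\left(-\s_2\, C(a,\phi,\d)\, Y\right)\right],
\end{equation}
while Theorem~\thv(M.Th1) gives $\limsup_{t\to\infty}\Psi_t^{>\d}(\phi)\leq \limsup_{t\to\infty}\P[\max\EE_t>\d]\to 0$ as $\d\to\infty$. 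Taking first $t\to\infty$ and then $\d\to\infty$ squeezes both $\liminf_{t\to\infty}\Psi_t(\phi)$ and $\limsup_{t\to\infty}\Psi_t(\phi)$ to the common value $\E[\exp(-\s_2\, C(a,\phi)\, Y)]$; the passage $\d\to\infty$ inside the expectation is justified by bounded convergence, since the integrand is bounded by $1$ and $Y\geq 0$.

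The principal technical obstacle is the identification step in (a), where three interleaved limits (in $\d$, in $r$, and in $t$) must be interchanged and the $(1-e^{-2az})$ weight must be shown to be irrelevant in the final expression. The available tools are the monotonicity $v_\d\searrow v$, the exponential bound \eqv(A.4) on the linearized F-KPP equation providing a $\d$-independent majorant, and the uniform (on compacts in $a$) convergence in Proposition~\thv(A.Prop1). Once these uniformities are marshalled, part (b) is essentially a bookkeeping consequence of the preceding theorems.
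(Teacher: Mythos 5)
Your part (b) is fine and matches the paper: once $\lim_{t}\Psi_t^{<\d}(\phi)=\E[\exp(-\s_2 C(a,\phi,\d)Y)]$ is in hand from the proof of Theorem \thv(E.Th1), the squeeze via Theorem \thv(M.Th1) and bounded convergence in $\d$ gives \eqv(E.7). The monotonicity $v_\d\searrow v$ and the resulting existence of $\lim_{\d}C(a,\phi,\d)$ are also correct and are exactly how the paper (via the Lemma 4.10 mechanism of \cite{ABK_E}) proceeds.

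The gap is in your identification step for \eqv(E.9): you propose to ``finally send $r\to\infty$ by Proposition \thv(A.Prop1) applied to $v$ itself,'' but Proposition \thv(A.Prop1) does not apply to $v$. Since $\phi$ has compact support, $v(0,x)=1-e^{-\bar\phi(-x)}$ vanishes for $x\to-\infty$ as well as for $x\to+\infty$, so hypothesis (iii) (a uniform lower bound on $\int_x^{x+N}v(0,y)\,dy$ for all $x\le -M$) fails; this hypothesis is precisely what the truncation by $\d$ is introduced to restore, and it is also what underlies the sandwich \eqv(A.4.2) that your fixed-$r$ representation relies on. Consequently neither the existence of $\lim_{t\to\infty}\frac{1}{\sqrt{2\pi}}\int_0^\infty v(t,z+\sqrt2 t)e^{(\sqrt2+a)z-a^2t/2}\,dz$ nor its equality with $\lim_\d C(a,\phi,\d)$ follows from Proposition \thv(A.Prop1); this is the content that must be proved separately. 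The paper does it by rerunning the proof of Lemma 4.10 of \cite{ABK_E} with Lemma \thv(E.Lem2) in place of Lemma 4.8 there, i.e.\ by controlling $v_\d-v$ uniformly in $t$ through the tail estimate for the maximum (Lemma \thv(E.Lem1)) and the shift relation \eqv(E.12). You should replace your appeal to Proposition \thv(A.Prop1) for $v$ by this argument. Separately, your treatment of the factor $(1-e^{-2az})$ is only gestured at; the paper's argument is elementary and worth stating: on $z\in[0,K]$ the integrand is bounded and the prefactor $e^{-a^2t/2}$ kills the contribution as $t\to\infty$, while on $z>K$ one has $0\le e^{-2az}\le e^{-2aK}$, so the weighted and unweighted integrals differ by a factor $1+O(e^{-2aK})$; letting $K\to\infty$ gives \eqv(fz.10).
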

\begin{proof} 
First we note that 
\be\Eq(fz.10)
C(a,\phi.\d)=\lim_{t\to\infty}\frac{1}{\sqrt{2\pi}}\int_0^\infty v_\d(t,z+\sqrt{2}t)e^{(\sqrt{2}+a)z-a^2t/2}{d}z.
\ee
To see this, note that for any $K<\infty$,
\be\Eq(fz.11)
\lim_{t\to\infty}\frac{1}{\sqrt{2\pi}}\int_0^K v_\d(t,z+\sqrt{2}t)e^{(\sqrt{2}+a)z-a^2t/2}{d}z
\leq \lim_{t\to\infty}\frac{1}{\sqrt{2\pi}} Ke^{-a^2t^2/2} e^{(\sqrt{2}+a)K}=0.
\ee
Hence, for any $K<\infty$, 
\bea\Eq(fz.12)
&0&\geq C(a,\phi.\d)-\limsup_{t\to\infty}\frac{1}{\sqrt{2\pi}}\int_0^\infty v_\d(t,z+\sqrt{2}t)e^{(\sqrt{2}+a)z-a^2t/2}{d}z
\\\nonumber
&&\geq C(a,\phi.\d)-\liminf_{t\to\infty}\frac{1}{\sqrt{2\pi}}\int_0^\infty v_\d(t,z+\sqrt{2}t)e^{(\sqrt{2}+a)z-a^2t/2}{d}z
\\\nonumber
&&\geq -
e^{-aK} \limsup_{t\to\infty}\frac{1}{\sqrt{2\pi}}\int_0^\infty v_\d(t,z+\sqrt{2}t)e^{(\sqrt{2}+a)z-a^2t/2}{d}z.
\eea
Since this holds for all $K$, Eq. \eqv(fz.10) follows. 
It remains to control the limit as $\d\uparrow\infty$ of the right-hand side of \eqv(fz.10). But an exact rerun of the proof of 
%
 Lemma 4.10 in \cite{ABK_E} using Lemma \thv(E.Lem2) below instead of 
Lemma 4.8 of \cite{ABK_E} yields that 
\be\Eq(E.12')
\lim_{\d\uparrow\infty}\lim_{t\uparrow\infty}\int_0^\infty v_\d(t,x+\sqrt{2}t)e^{(\sqrt{2}+a)z-a^2t/2}{d}z\equiv\lim_{\d\uparrow\infty}  F(\d)\equiv F
\ee
exists. 
 By \eqv(E.8) and \eqv(E.12') we have
\be
\lim_{t\to\infty}\Psi_t^{<\d}(\phi)=\E\left[\exp\left(-\s_2C(a,\phi,\delta)Y\right)\right]=\E\left[\exp\left(-\frac{\s_2}{\sqrt{2\pi}}F(\d)Y \right)\right].
\ee
 This converges for $\d\to\infty$ to 
\be\Eq(E.10)
\E\left[\exp\left(-\frac{\s_2}{\sqrt{2\pi}}FY\right)\right].
\ee
Hence $F=0$ would imply
\be
\lim_{\d\to\infty}\lim_{t\to\infty}\Psi_t(\phi)=1,
\ee
which contradicts \eqv(E.11) and Theorem \thv(E.Th1). Hence $F>0$. Moreover, \eqv(E.10) implies \eqv(E.7), which concludes the proof of Proposition \thv(E.Prop1). 
\end{proof}

We recall the following estimate for the tail probabilities of standard BBM.

\begin{lemma}[\cite{ABK_P}, Corollary 10]\TH(E.Lem1) There exists $t_0<\infty$, such that 
 for $z>1$ and $t\geq t_0 $ 
\be
\P\left[\max_{k\leq n(t)}\bar x_k(t)-\sqrt{2}t+\frac{3}{2\sqrt{2}}\log t\geq z\right]\leq \rho z\exp\left(-\sqrt{2}z-\frac{z^2}{2t}+\frac{3z}{2\sqrt{2}}\frac{\log t}{t}\right),
\ee
for some constant $\rho>0$.
\end{lemma}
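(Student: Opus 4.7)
The plan is to derive the bound as in Corollary 10 of \cite{ABK_P}, combining a many-to-one (first moment) argument with Bramson's refined ballot-type estimates for solutions of the F-KPP equation. Setting $u(t,x) = \P[\max_{k\leq n(t)}\bar x_k(t) \geq x]$ and $m(t) = \sqrt 2 t - (3/(2\sqrt 2))\log t$, by McKean's representation $u$ solves the F-KPP equation \eqv(fkpp) with initial condition $u(0,x) = \1_{x<0}$, so what is required is a sharp upper bound on $u(t, m(t)+z)$ in the regime $z > 1$.

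First I would apply a naive many-to-one bound: $u(t, m(t)+z) \leq e^t \P[B_t \geq m(t)+z]$, where $B_t$ is a standard Brownian motion. Expanding
\[
(m(t)+z)^2/(2t) = t - (3/2)\log t + \sqrt 2 z + z^2/(2t) - (3z \log t)/(2\sqrt 2\, t) + O(\log^2 t / t)
\]
and applying the Gaussian tail bound \eqv(sgt.1) reproduces exactly the exponential factors of the stated bound, but with a prefactor of order $1/z$ coming from $1/(m(t)+z)$ — this is too weak by a factor of $z^2$, and in particular misses the expected $z$-linear prefactor. The sharp prefactor is recovered by replacing this crude step with Bramson's Proposition 8.3 of \cite{B_C}, which controls $u(t, m(t)+z)$ by the solution of the linearised F-KPP equation multiplied by the probability that an associated Brownian bridge stays below the curve $s \mapsto m(s)$ on $[0,t]$. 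This ballot-type estimate contributes a factor proportional to $(1+z)/t$, precisely cancelling the excess $t$ from the first-moment side and yielding the factor $\rho z$.

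The main obstacle in a self-contained argument is proving the ballot estimate uniformly in the regime $z > 1$ and $t \geq t_0$: one must combine the sharp upper bound for the linearised F-KPP solution with a careful analysis of the survival probability of a Brownian bridge below the square-root-of-$t$ barrier. Since \cite{ABK_P} carries out this analysis and states the result in precisely the form required here, the plan is simply to invoke Corollary 10 of that paper; no additional work is necessary.
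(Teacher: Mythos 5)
The paper offers no proof of this lemma: it is quoted verbatim from \cite{ABK_P}, Corollary 10, which is exactly what you end up doing, so the two approaches coincide. One small slip in your sketch of the underlying argument: the Gaussian-tail prefactor $1/(m(t)+z)$ is of order $1/t$ rather than $1/z$ when $z=o(t)$, so the naive first-moment bound overshoots the target by a factor $t/z$ (not $z^2$); this is immaterial here, since the ballot-type factor of order $(1+z)/t$ you invoke still repairs it and the final step is in any case just the citation.
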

\begin{lemma}\TH(E.Lem2)
 Let $u$ be a solution of the F-KPP equation with initial data satisfying Assumptions (i)-(iv) of Proposition \thv(A.Prop1). Let 
\be\Eq(fz.13)
C(a)=\lim_{t\to\infty}\frac{1}{\sqrt{2\pi}}\int_0^\infty u(t,z+\sqrt{2}t)e^{(\sqrt{2}+a)z-a^2t/2}{d}z.
\ee
Then for any $x\in\R$:
\be\Eq(E.12)
\lim_{t\to\infty}\frac{1}{\sqrt{2\pi}}\int_0^\infty u(t,x+z+\sqrt{2}t)e^{(\sqrt{2}+a)z-a^2t/2}{d}z=C(a)e^{-(\sqrt{2}+a)x}.
\ee
Moreover, for any bounded continuous function $h(x)$, that is zero for $x$ small enough
\bea\Eq(E.13)
& &\lim_{t\to\infty}\int_{-\infty}^0 \E\left[h\left(y+\max \bar{x}_i(t)-\sqrt{2}t\right)\right]\frac{1}{\sqrt{2\pi}}e^{-(\sqrt{2}+a)y-a^2t/2}{d}y\nonumber\\
&=& C(a)\int_\R h(z)(\sqrt{2}+a)e^{-(\sqrt{2}+a)z}{d}z,
\eea
where $\{\bar{x}_i(t),i\leq n(t)\}$ are the particles of a standard BBM with variance $1$. Here $C(a)$ is the constant from \eqv(fz.13) for $u$ satisfying the initial condition $\1_{\{x\leq 0\}}$. 
\end{lemma}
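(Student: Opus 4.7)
The strategy is first to establish \eqv(E.12) as a direct consequence of \eqv(fz.13) through a change of variables, and then to derive \eqv(E.13) from \eqv(E.12) via an integration by parts against the distribution of $M_t = \max_{i\leq n(t)} \bar x_i(t) - \sqrt 2 t$ followed by dominated convergence.

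For \eqv(E.12), substituting $w = x+z$ rewrites the left-hand side as $e^{-(\sqrt 2+a)x}\cdot \frac{1}{\sqrt{2\pi}}\int_x^\infty u(t, w+\sqrt 2 t)e^{(\sqrt 2+a)w-a^2 t/2}\,dw$. The integrals over $[x,\infty)$ and $[0,\infty)$ differ only by the integral over a bounded interval where, using $u\leq 1$, the integrand is $O(e^{(\sqrt 2+a)|x|}e^{-a^2 t/2})$; this correction vanishes as $t\to\infty$. By \eqv(fz.13), the integral over $[0,\infty)$ converges to $\sqrt{2\pi}\,C(a)$, which gives the claim.

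For \eqv(E.13), we use that $\P[M_t>w] = u(t, w+\sqrt 2 t)$, where $u$ is the F-KPP solution with initial data $\1_{\{x\leq 0\}}$ (which clearly satisfies the hypotheses (i)--(iv)). By Fubini and the change of variable $z = y+s$, the left-hand side becomes
\begin{equation*}
\int h(z)\, \frac{e^{-(\sqrt 2+a)z - a^2 t/2}}{\sqrt{2\pi}}\left(\int_z^\infty e^{(\sqrt 2+a)s}\,dF_t(s)\right)dz,
\end{equation*}
where $F_t$ is the c.d.f.\ of $M_t$. An integration by parts in $s$, in which the many-to-one bound $u(t, s+\sqrt 2 t) \leq C e^{-\sqrt 2 s - s^2/(2t)}$ kills the boundary term at $+\infty$, yields
\begin{equation*}
\int_z^\infty e^{(\sqrt 2+a)s}\,dF_t(s) = e^{(\sqrt 2+a)z}u(t, z+\sqrt 2 t) + (\sqrt 2+a)\int_z^\infty e^{(\sqrt 2+a)s} u(t, s+\sqrt 2 t)\,ds.
\end{equation*}

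Substituting back splits the LHS of \eqv(E.13) into two terms. The first, $\int h(z)\, u(t,z+\sqrt 2 t)e^{-a^2 t/2}/\sqrt{2\pi}\,dz$, tends to zero thanks to the $e^{-a^2 t/2}$ prefactor (integrability on $\supp h$ is secured by $u\le 1$ for $z<0$ and $u\le C e^{-\sqrt 2 z}$ for $z\geq 0$). The second equals $(\sqrt 2+a)\int h(z)\, e^{-(\sqrt 2+a)z}\, I_t(z)\,dz$, where $I_t(z) := \frac{1}{\sqrt{2\pi}}\int_z^\infty u(t, s+\sqrt 2 t)e^{(\sqrt 2+a)s - a^2 t/2}\,ds \to C(a)$ pointwise by \eqv(E.12). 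A uniform bound $I_t(z) \leq I_t(0) + O(e^{-a^2 t/2})$ for $z$ in $[-B,\infty)$ (where $B$ is such that $h\equiv 0$ on $(-\infty,-B)$), together with the integrability of $h(z)e^{-(\sqrt 2+a)z}$ on $[-B,\infty)$, justifies dominated convergence and produces the limit $(\sqrt 2+a)C(a)\int h(z)e^{-(\sqrt 2+a)z}\,dz$, the right-hand side of \eqv(E.13). The main obstacle is handling the boundary term in the integration by parts at $+\infty$ and establishing the uniform-in-$t$ domination of $I_t$; both are handled cleanly once the first-moment (many-to-one) bound on $u(t, s+\sqrt 2 t)$ is in play.
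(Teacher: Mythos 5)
Your proof is correct, and while part one coincides with the paper's argument, part two takes a genuinely different route. For \eqv(E.12) both you and the paper perform the same shift of the integration variable and dispose of the leftover integral over the bounded interval between $0$ and $x$ (you via $u\le1$ and the $e^{-a^2t/2}$ prefactor, the paper via $u(t,\cdot+\sqrt2 t)\to0$ pointwise there); these are equivalent. For \eqv(E.13) the paper first observes that with $h=\1_{[b,\infty)}$ the expectation $\E[h(y+\max\bar x_i(t)-\sqrt2 t)]$ is exactly $u(t,b-y+\sqrt2 t)$ for the Heaviside initial datum, so that \eqv(E.12) gives the claim for indicators, and then invokes linearity and a monotone class argument (citing Lemma 4.11 of \cite{ABK_E}) to pass to general $h$. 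You instead keep $h$ general throughout: Fubini, Lebesgue--Stieltjes integration by parts against the law of the maximum (with the many-to-one tail bound killing the boundary term at $+\infty$), and dominated convergence using the uniform bound $I_t(z)\le I_t(0)+O(e^{-a^2t/2})$ on $[-B,\infty)$. Your version is more self-contained and makes the error terms explicit, at the cost of a longer computation; the paper's version is shorter but outsources the approximation step to an external lemma. Two minor points you should make explicit if you write this up: for signed $h$ the Fubini step needs the absolute integrability check (immediate from $|h|\le\|h\|_\infty\1_{[-B,\infty)}$ and the exponential tail of the maximum, or by splitting $h$ into positive and negative parts), and the pointwise convergence $I_t(z)\to C(a)$ is precisely \eqv(E.12) after recentering the lower limit of integration at $z$, which is worth stating since it is the place where the first part of the lemma is actually used.
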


\begin{proof}
 We have by a simple change of variables
\bea
 &&\frac{1}{\sqrt{2\pi}}\int_0^\infty u(t,z+\sqrt{2}t)e^{(\sqrt{2}+a)z-a^2t/2}{d}z
\\\nonumber
&=&\frac{e^{(\sqrt 2+a)x}}{\sqrt{2\pi}}\int_{-x}^\infty u(t,x+z+\sqrt{2}t)e^{(\sqrt{2}+a)z-a^2t/2}{d}z.
\eea
Moreover, $\lim_{t\to\infty} u(t,x+z+\sqrt{2}t)=0$ implies
\be
\lim_{t\to\infty}\frac{1}{\sqrt{2\pi}}\int_{-x}^0 u(t,x+z+\sqrt{2}t)e^{(\sqrt{2}+a)z-a^2t/2}{d}z=0,
\ee
which  proves \eqv(E.12). Moreover, \eqv(E.12) with initial condition $\1_{\{x\leq0\}}$ implies that \eqv(E.13) holds for $h(x)=\1_{[b,\infty)},b\in\R$. For general $h$ \eqv(E.13) follows in the same way as Lemma 4.11 in \cite{ABK_E} by linearity and a monotone class argument.
\end{proof}

\section{The auxiliary process}

We define the following auxiliary process that  has the same limiting behaviour as that of the two-speed BBM.
We will denote the law of these processes by $P$ and expectations by $E$. If desired, 
all ingredients of the auxiliary process can be thought of to be defined on a new probability space.
 Let $(\eta_i;i\in\N)$ be the atoms of a Poisson point process $\eta$ on $(-\infty,0)$ with intensity measure
\be
\sfrac{\s_2}{\sqrt{2\pi}}e^{-(\sqrt{2}+a)z}   e^{-a^2t/2}
{d}z.
\ee
For each $i\in\N$ consider independent standard BBMs $\bar x^{i}$. 
The auxiliary point process of interest is the superposition of the i.i.d BBMs with drift  shifted by $\eta_i+\frac{1}{\sqrt{2}+a}\log Y$, where $a$ is the constant defined in \eqv(constant.a):
\be
\Pi_t=\sum_{i,k}\d_{\left(\eta_i+\frac{1}{\sqrt{2}+a}\log Y+\bar x_k^i(t)-\sqrt{2}t\right)\s_2}.
\ee
\begin{remark} 
The form of  the auxiliary process is similar to the case of standard BBM, but with a different intensity of the Poisson process. In particular, the intensity decays exponentially with  $t$. This is a consequence of the fact that particles at the time of the speed change were forced to be $O(t)$ below the line $\sqrt 2 t$, in contrast to the $O(\sqrt t)$ in the case of ordinary BBM. The reduction of the intensity  of the process with $t$ 
forces the particles to be selected at these locations.
\end{remark}

\begin{theorem}\TH(Au.Th1)
Let $\EE_t$ be the extremal process of the two-speed BBM. Then
\be
\lim_{t\to\infty}\EE_t \stackrel{law}{=} \lim_{t\to\infty}\Pi_t.
\ee
\end{theorem}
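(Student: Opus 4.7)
The plan is to show the two point processes have the same Laplace functional in the limit, by computing it directly for $\Pi_t$ and matching the expression obtained for $\EE_t$ in Proposition \thv(E.Prop1). Fix a nonnegative $\phi\in\CC_c(\R)$ and set $\bar\phi(z)=\phi(\s_2 z)$ as in the proof of Theorem \thv(E.Th1). By the tower property and independence of the BBMs $\bar x^i$ from $(\eta,Y)$,
\be
E\Big[e^{-\int\phi\,d\Pi_t}\Big]
= E\Big[\prod_{i} E\big[e^{-\sum_k \bar\phi(\eta_i+\frac{1}{\sqrt 2+a}\log Y+\bar x^i_k(t)-\sqrt 2 t)}\,\big|\,\eta,Y\big]\Big].
\ee
Since $E[\prod_k e^{-\bar\phi(\bar x_k(t)-x)}]=1-v(t,x)$ for the solution $v$ of F-KPP with $v(0,x)=1-e^{-\bar\phi(-x)}$ (as in Proposition \thv(E.Prop1)), the inner expectation equals $1-v(t,\sqrt 2 t-\eta_i-\frac{1}{\sqrt 2+a}\log Y)$.

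The second step uses the Poisson formula. Conditioning on $Y$ and applying the standard identity $E[\prod_i g(\eta_i)]=\exp(-\int (1-g)\,d\mu)$ for Poisson $\eta$ with intensity $\mu$, one gets
\be
E\Big[e^{-\int\phi\,d\Pi_t}\,\big|\,Y\Big]
=\exp\!\left(-\int_{-\infty}^{0} v\!\left(t,\sqrt 2 t - z -\tfrac{1}{\sqrt 2+a}\log Y\right)\tfrac{\s_2}{\sqrt{2\pi}}e^{-(\sqrt 2+a)z}e^{-a^2t/2}\,dz\right).
\ee
Now make the change of variables $w=-z-\frac{1}{\sqrt 2+a}\log Y$, which produces a factor $Y$ from $e^{-(\sqrt 2+a)z}=Ye^{(\sqrt 2+a)w}$ and turns the domain into $w>-\frac{1}{\sqrt 2+a}\log Y$. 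Thus the exponent becomes
\be
-\s_2 Y\cdot\frac{1}{\sqrt{2\pi}}\int_{-\frac{\log Y}{\sqrt 2+a}}^{\infty} v(t,\sqrt 2 t+w)\,e^{(\sqrt 2+a)w-a^2t/2}\,dw.
\ee

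The third step is to pass to the limit $t\to\infty$ inside the conditional expectation. The contribution from $w\in(-\tfrac{1}{\sqrt 2+a}\log Y,0)$ is over a bounded interval (for a.e. $Y$), the integrand is bounded by $e^{(\sqrt 2+a)w-a^2t/2}$ since $v\leq 1$, and $a>0$ so $e^{-a^2t/2}\to 0$; hence this piece vanishes. The remaining integral over $(0,\infty)$ converges to $C(a,\phi)$ by Proposition \thv(E.Prop1) (equivalently Eq. \eqv(fz.10)). Therefore, for a.e. realisation of $Y$,
\be
\lim_{t\to\infty} E\Big[e^{-\int\phi\,d\Pi_t}\,\big|\,Y\Big]=\exp\!\big(-\s_2 C(a,\phi)Y\big).
\ee
Taking expectation over $Y$ and using dominated convergence (the integrand is bounded by $1$) yields
\be
\lim_{t\to\infty} E\Big[e^{-\int\phi\,d\Pi_t}\Big]=\E\!\left[\exp\!\big(-\s_2 C(a,\phi)Y\big)\right],
\ee
which is precisely $\lim_{t\to\infty}\Psi_t(\phi)$ from Proposition \thv(E.Prop1). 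Since this holds for all nonnegative $\phi\in\CC_c(\R)$, the Laplace functionals coincide in the limit, and the claim follows.

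The main technical obstacle is the exchange of limit and integration for the part of the integral coming from the shift by $\frac{1}{\sqrt 2+a}\log Y$: since $Y$ can be arbitrarily small or large, the lower limit $-\tfrac{1}{\sqrt 2+a}\log Y$ is random, and one must argue (almost surely in $Y$) that this tail piece of the integral is negligible. The decay factor $e^{-a^2 t/2}$ together with the boundedness of $v$ makes this straightforward but must be spelled out carefully, together with a routine use of dominated convergence for the outer $Y$-integration.
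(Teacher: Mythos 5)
Your proof is correct and follows essentially the same route as the paper: compute the Laplace functional of $\Pi_t$ via the Poisson formula, identify the inner expectation with the F-KPP solution $v$, extract the factor $Y$ from the shift $\frac{1}{\sqrt 2+a}\log Y$, and match the limit with the expression $\E[\exp(-\s_2 C(a,\phi)Y)]$ from Proposition \thv(E.Prop1). The only cosmetic difference is that you carry out the change of variables and the vanishing of the boundary piece by hand, whereas the paper delegates exactly this step to Lemma \thv(E.Lem2) (Eq. \eqv(E.12)).
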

\begin{proof}
 Using the notation $\bar{\phi}(z)=\phi(\s_2 z)$ and by the form of the Laplace functional of a Poisson point process we have
\bea\Eq(Au.8)
& & E\left[\exp\left(-\int\phi(x)\Pi_t({d}x)\right)\right]\nonumber\\
&=& E\Biggl[\exp\Biggl(-\s_2\int_{-\infty}^0\Biggl\{1-E\left[\exp\left(-\sum_{k\leq n(t)} \bar{\phi}\left(z+\bar x_k(t)-\sqrt{2}t+\frac{\log Y}{\sqrt{2}+a}\right)\right)\right]\Biggr\}\nonumber\\
&&\qquad  \times e^{-(\sqrt{2}+a)z}e^{-a^2t/2}{d}z\Biggr)\Biggr]\nonumber\\
&=& E\left[\exp\left(\frac{\s_2}{\sqrt{2\pi}}\int_0^\infty u\left(t,z+\sqrt{2}t-\frac{1}{\sqrt{2}+a}\log Y\right)e^{(\sqrt{2}+a)z}e^{-a^2t/2}{d}z\right)\right],
\eea
with
\be
u(t,x)=1-E\left[\exp\left(-\sum_{k\leq n(t)}\bar{\phi}(-x+\bar x_k(t)\right)\right].
\ee
By Lemma \thv(E.Lem2) we have
\bea
&&\lim_{t\to\infty}\frac{1}{\sqrt{2\pi}}\int_0^\infty u\left(t,z+\sqrt{2}t-\frac{1}{\sqrt{2}+a}\log Y\right)e^{(\sqrt{2}+a)z}e^{-a^2t/2}{d}z\nonumber\\
&=&Y\lim_{t\to\infty}\frac{1}{\sqrt{2\pi}}\int_0^\infty u(t,z+\sqrt{2}t)e^{(\sqrt{2}+a)z}e^{-a^2t/2}{d}z,
\eea
which exists and is strictly positive by Proposition \thv(E.Prop1). This implies that the Laplace functionals of $\lim_{t\to\infty}\Pi_t$ and of the extremal process of the two-speed  BBM are equal.
\end{proof}

The following proposition shows that in spite of the different Poisson ingredients, when we look at the process of the extremes of each of the $x^i(t)$, we end up with a Poisson point process just like in the standard BBM case. 

\begin{proposition}\TH(Au.Prop1)
Define the point process
\be\Eq(fz.14)
\Pi^{ext}_t\equiv\sum_{i,k}\d_{\left(\eta_i+\frac{1}{\sqrt{2}+a}\log Y+\max_{k\leq n_i(t)}\bar x_k^i(t)-\sqrt{2} t\right)\s_2}.
\ee
Then 
\be\Eq(fz.15)
\lim_{t\to\infty}\Pi^{ext}_t \stackrel{law}{=} P_Y\equiv \sum_{i\in \N}\d_{p_i},  
\ee
where $P_Y$ is the Poisson point process on $\R$ with intensity measure 
$\s_2C(a)Y\sqrt{2}e^{-\sqrt{2}x}{d}x$.
\end{proposition}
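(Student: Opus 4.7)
The plan is to compute the Laplace functional of $\Pi^{ext}_t$ directly, exploiting the Poisson structure of $(\eta_i)$ and the independence of the ambient BBMs $\bar x^i$, and then apply Lemma \thv(E.Lem2) to pass to the limit. Fix $\phi\in\CC_c^+(\R)$ and write $\bar\phi(u)=\phi(\s_2 u)$, $M_t\equiv \max_{k\leq n(t)}\bar x_k(t)-\sqrt 2 t$, and $h_t(u)\equiv 1-\E[e^{-\bar\phi(u+M_t)}]$. Because $\bar\phi$ is nonnegative continuous with compact support, $h_t$ is bounded in $[0,1]$ and vanishes for $u$ sufficiently small, uniformly in $t$ (using tightness of $M_t$ for standard BBM).

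First I would condition on $Y$ and on the independent BBMs. Since $(\eta_i)$ is an independent Poisson process on $(-\infty,0)$ with intensity $\frac{\s_2}{\sqrt{2\pi}}e^{-(\sqrt 2+a)z}e^{-a^2t/2}dz$, the Campbell--Mecke formula gives
\begin{equation*}
E\left[e^{-\int\phi\,d\Pi^{ext}_t}\,\Big|\,Y\right]
=\exp\left(-\frac{\s_2}{\sqrt{2\pi}}\int_{-\infty}^0 h_t\!\left(z+\tfrac{\log Y}{\sqrt 2+a}\right)e^{-(\sqrt 2+a)z}e^{-a^2t/2}\,dz\right).
\end{equation*}
The change of variables $v=z+\log Y/(\sqrt 2+a)$ absorbs the $Y$-shift into a factor $Y$ out front:
\begin{equation*}
\exp\left(-\s_2 Y\int_{-\infty}^{\log Y/(\sqrt 2+a)} h_t(v)\,\frac{e^{-(\sqrt 2+a)v}e^{-a^2t/2}}{\sqrt{2\pi}}\,dv\right).
\end{equation*}

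Next I would split the inner integral as $\int_{-\infty}^0+\int_0^{\log Y/(\sqrt 2+a)}$. The second piece is bounded by $\|h_t\|_\infty\cdot |\log Y|\cdot e^{-a^2t/2}$ (up to constants), which vanishes as $t\uparrow\infty$ since $a=\sqrt 2(\s_2-1)>0$. The first piece is exactly the integral appearing in Lemma \thv(E.Lem2) with bounded continuous function $1-e^{-\bar\phi}$ (zero for $x$ sufficiently small), so
\begin{equation*}
\lim_{t\to\infty}\int_{-\infty}^0 h_t(v)\,\frac{e^{-(\sqrt 2+a)v-a^2t/2}}{\sqrt{2\pi}}\,dv
= C(a)\int_\R (1-e^{-\bar\phi(z)})(\sqrt 2+a)e^{-(\sqrt 2+a)z}\,dz.
\end{equation*}

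Combining, the conditional Laplace functional converges pointwise in $Y$ to
\begin{equation*}
\exp\left(-\s_2 C(a)Y\int_\R(1-e^{-\bar\phi(z)})(\sqrt 2+a)e^{-(\sqrt 2+a)z}\,dz\right).
\end{equation*}
The key algebraic identity $\sqrt 2+a=\sqrt 2\s_2$ (from $a=\sqrt 2(\s_2-1)$) together with the change of variables $x=\s_2 z$ converts the exponent to $-\s_2 C(a)Y\sqrt 2\int_\R(1-e^{-\phi(x)})e^{-\sqrt 2 x}\,dx$, which is exactly the Laplace functional of the Poisson point process $P_Y$ with intensity $\s_2 C(a)Y\sqrt 2 e^{-\sqrt 2 x}\,dx$, conditional on $Y$. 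Since everything is bounded by $1$, dominated convergence removes the conditioning on $Y$ and gives convergence of Laplace functionals, hence of $\Pi_t^{ext}$ to $P_Y$ in law.

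The only real obstacle is to apply Lemma \thv(E.Lem2) cleanly despite the $t$-dependence of the integrand through $M_t$: this is handled by taking $h(u)=1-e^{-\bar\phi(u)}$ so that $h_t(v)=E[h(v+M_t)]$ already has the form treated by the lemma. The rest is the algebraic collapse $\sqrt 2+a=\sqrt 2\s_2$, which is exactly what makes the auxiliary intensity reproduce the correct $e^{-\sqrt 2 x}$ tail of the limiting Poisson process.
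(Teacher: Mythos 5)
Your proposal is correct and follows essentially the same route as the paper: compute the Poisson Laplace functional conditionally on $Y$, absorb the shift $\frac{1}{\sqrt2+a}\log Y$ into a multiplicative factor $Y$, apply Lemma \ref{E.Lem2} with $h=1-e^{-\bar\phi}$, and collapse via $\sqrt2+a=\sqrt2\,\s_2$ and $x=\s_2 z$. You are in fact slightly more careful than the paper about the boundary term $\int_0^{\log Y/(\sqrt2+a)}$ created by the change of variables and about removing the conditioning on $Y$ by dominated convergence (the parenthetical claim that $h_t$ itself vanishes for small $u$ uniformly in $t$ is not quite right, but it is never used — the hypothesis of the lemma concerns $h$, which you apply correctly).
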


\begin{proof}
 We consider the Laplace functional of $\Pi^{ext}_t$. Let $M^{(i)}(t)=\max \bar x_k^{(i)}(t)$ and as before $\bar{\phi}(z)=\phi(\s_2 z)$. We want to show
\bea
&&\lim_{t\uparrow\infty}E\left[\exp\left(-\sum_{i}\bar{\phi}(\eta_i+M^{(i)}(t)-\sqrt{2}t \right)\right]\nonumber\\
&=& \exp\left(-\s_2C(a)\int_{-\infty}^{\infty} \left(1-e^{-\phi(x)}\right)\sqrt{2}e^{-\sqrt{2}x}{d}x\right).
\eea
Since $\eta_i$ is a Poisson point process and the $M^{(i)}$ are i.i.d. we have
\bea
&&E\left[\exp\left(-\sum_{i}\bar{\phi}(\eta_i+M^{(i)}(t)-\sqrt{2}  t\right)\right]\nonumber\\
&=&\exp\left(-\s_2\int_{-\infty}^0E\left[1-e^{-\bar{\phi}(z+M(t)-\sqrt{2} t)}\right]
e^{-(\sqrt{2}+a)z-a^2t/2 }\frac{{d}z}{\sqrt{2\pi}}\right),
\eea
where $M(t)$ has the same distribution as one the variables $M^{(i)}(t)$. Now we apply Lemma \thv(E.Lem2) with $h(x)=1-e^{-\bar{\phi}(z)}$. Hence the result follows  by using that $\bar{\phi}(z)=\phi(\s_2 z)$ and $\sqrt{2}+a=\sqrt{2}\s_2$ together with the change of variables $x=\s_2z$.
\end{proof}

The following proposition states that the Poisson points of the auxiliary process contribute to the limiting process come from a neighbourhood of 
$-at$. 

\begin{proposition}\TH(Au.Prop2)
Let $z\in\R,\e>0$.  Let $\eta_i$ be the atoms of a Poisson point process with intensity 
measure $Ce^{-(\sqrt 2+a)x-a^2t/2}dx$ on  $(-\infty,0]$. Then there exists $B<\infty$ such that 
\be
\sup_{t\geq t_0}P\left(\exists i,k:\eta_i+\bar x_k^i(t)-\sqrt{2}t\geq z, \eta_i\not\in[-at-B\sqrt{t},-at+B\sqrt{t}]\right)\leq \e.
\ee
\end{proposition}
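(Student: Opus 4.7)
\medskip
\noindent\textbf{Proof plan.} The approach is a first-moment union bound over the Poisson atoms. Conditioning on $\eta$ and using that the BBMs $\bar x^i$ are i.i.d.\ and independent of $\eta$, Campbell's formula bounds the probability in question by
\begin{equation}
I(B,t)\;=\;\int_{(-\infty,0]\setminus J_{B,t}} P\!\left(M(t)\geq \sqrt 2\, t + z - \eta\right)\, C\, e^{-(\sqrt 2+a)\eta - a^2 t/2}\,d\eta,
\end{equation}
where $J_{B,t}=[-at-B\sqrt t,-at+B\sqrt t]$ and $M(t)=\max_{k\le n(t)}\bar x_k(t)$. It suffices to show $I(B,t)$ can be made arbitrarily small, uniformly in $t\ge t_0$, by choosing $B$ large.

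The key move is to insert the precise BBM tail bound from Lemma \thv(E.Lem1). Writing $v=-\eta\ge 0$ and $w=z+v=z-\eta$, the integrand becomes, for $w\ge 1$, of the shape
\begin{equation}
\text{const}\cdot t^{-3/2}(1+|w|)\exp\!\Big(-\sqrt 2\, w - \tfrac{w^2}{2t} + (\sqrt 2+a)v - \tfrac{a^2 t}{2}\Big)(1+o(1)).
\end{equation}
Completing the square in $v$, the exponent collapses to
\begin{equation}
-(\sqrt 2+a)z - \frac{(v-(at-z))^2}{2t},
\end{equation}
so that (up to polynomial factors and the fixed prefactor $e^{-(\sqrt 2+a)z}$) the integrand is a Gaussian in $v$ of variance $t$ centered at $at-z$. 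The constraint $\eta\notin J_{B,t}$ translates to $|v-at|>B\sqrt t$, equivalently (for $t$ large relative to $|z|$) $|v-(at-z)|>B\sqrt t/2$. Scaling $v-(at-z)=s\sqrt t$, the $\sqrt t$ from $dv$ together with the worst-case linear factor $(1+|w|)\leq 1+at+|s|\sqrt t$ contributes at most $t^{3/2}$, exactly cancelling the $t^{-3/2}$ from the BBM tail bound. What remains is a Gaussian tail integral bounded by $c(z)(1+B)e^{-B^2/8}$, which is smaller than $\e$ for $B$ large enough.

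One minor boundary issue is that Lemma \thv(E.Lem1) requires its argument to exceed $1$; for the range $\eta\in[z-1,0]$ where $w=z-\eta\leq 1$, I would instead use the trivial bound $P(\cdot)\leq 1$, and note that the contribution is at most $\int_{z-1}^0 C e^{-(\sqrt 2+a)\eta-a^2 t/2}\,d\eta = O(e^{-a^2 t/2})$, negligible uniformly in $B$. The step I expect to require the most care is the uniform-in-$t$ Gaussian tail estimate itself: one must track the polynomial factor $(1+|w|)$ from Lemma \thv(E.Lem1) together with the $o(1)$ corrections in its statement against the negative quadratic in the exponent. Completing the square as above makes this bookkeeping transparent and exposes $\sqrt t$ around $-at$ as the true concentration scale for the Poisson atoms contributing to $\Pi^{ext}_t$, which is precisely the content of the proposition and matches the picture from Proposition \thv(P.Prop1) for the original two-speed BBM.
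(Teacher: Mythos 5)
Your proposal is correct and follows essentially the same route as the paper: a first-moment bound over the Poisson atoms, insertion of the tail estimate of Lemma \thv(E.Lem1), and a change of variables exhibiting the integrand as a Gaussian of variance $t$ centred at $at$ (up to the shift by $z$), whose tail beyond $B\sqrt t$ is small. Your extra care with the small-$w$ boundary region and the explicit tracking of the polynomial prefactor against the $t^{-3/2}$ are minor refinements of the same argument.
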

\begin{proof}   By a first order Chebychev inequality we have
\bea\Eq(Au.3)
&&P\left(\exists i,k:\eta_i+\bar x_k^{(i)}(t)-\sqrt{2}t\geq z, \eta_i>-at+B\sqrt{t}\right)\nonumber\\
&\leq& C\int_{-at+B\sqrt{t}}^0 P\left(\max \bar x_k(t)\geq \sqrt{2}t-x+z\right)e^{-(\sqrt{2}+a)x}e^{-a^2t/2}{d}x\nonumber\\
&=&C\int_0^{at-B\sqrt{t}} P\left(\max \bar x_k(t)\geq \sqrt{2}t+x+z\right)e^{(\sqrt{2}+a)x}e^{-a^2t/2}{d}x,
\eea
by the change of variables $x\to-x$. Using the asymptotics of Lemma  \thv(E.Lem1) we can bound \eqv(Au.3) from above by
\bea
& &\rho C\int_{0}^{at-B\sqrt{t}}t^{-1/2}e^{-\sqrt{2}(x+z)}e^{-(x+z)^2/2t}e^{(\sqrt{2}+a)x}e^{-a^2t/2}{d}x\nonumber\\
&\leq& \rho C\int_{-a\sqrt{t}}^{-B}e^{z^2/2}{d}z(1+o(1)),
\eea
by changing  variables  $x\to x/\sqrt{t} -a\sqrt{t}$. This is a Gaussian integral and can be made as small as we want by choosing $B$ large enough. 
Similarly  one bounds 
\be\Eq(fz.151)
P\left(\exists i,k:\eta_i+x_k^i(t)-\sqrt{2}t\geq z, \eta_i<-at-B\sqrt{t}\right)
\leq C\rho \int_B^\infty e^{z^2/2}{d}z(1+o(1)).
\ee
This concludes the proof. 
\end{proof}

The next proposition describes the law of the clusters $ \bar x_k^{(i)}$.
This is analogous to Theorem 3.4 in \cite{ABK_E}.
 
\begin{proposition}\TH(Au.Prop3)
 Let $x=at+o(t)$ and $\{\tilde x_k(t),k\leq n(t)\}$ be a standard BBM
 under the conditional law   $P\left(\cdot \big |\{\max  \tilde x_k(t)-\sqrt{2}t-x>0\}\right)$. Then 
the point process
\be
\sum_{k\leq n(t)} \d_{ \tilde x_k(t)-\sqrt{2}t-x}
\ee
converges in law under  $P\left(\cdot \big |\{\max  \tilde x_k(t)-\sqrt{2}t-x>0\}\right)$
as $t\to\infty$ to a well defined point process $\bar{\EE}$. The limit does not depend on $x-at$ and the maximum of $\bar{\EE}$ shifted by $x$ has the law of an exponential  random variable with parameter $\sqrt 2+a$.
\end{proposition}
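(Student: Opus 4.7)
The plan is to parallel the proof of Theorem~3.4 in \cite{ABK_E}, with Proposition \thv(A.Prop1) taking over the role played there by the standard Bramson front asymptotics. Fix a nonnegative $\phi\in\CC_c(\R)$. It suffices to show that the conditional Laplace functional
\be
L_t(\phi)\equiv\frac{E\Bigl[\exp\Bigl(-\sum_{k\leq n(t)}\phi(\tilde x_k(t)-\sqrt 2 t-x)\Bigr)\1_{\{\max_k \tilde x_k(t)>\sqrt 2 t+x\}}\Bigr]}{P(\max_k \tilde x_k(t)>\sqrt 2 t+x)}
\ee
converges as $t\uparrow\infty$ to a limit that depends only on $\phi$ and $a$, and not on the sub-leading $o(t)$-piece of $x$.

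The denominator is $u(t,\sqrt 2 t+x)$, where $u$ solves the F-KPP equation \eqv(fkpp) with Heaviside initial data $u(0,y)=\1_{\{y<0\}}$; by Proposition \thv(A.Prop1) it is equivalent to $C(a)t^{-1/2}e^{-\sqrt 2 x-x^2/(2t)}$. For the numerator, set $g(z)=e^{-\phi(z)}\1_{\{z\leq 0\}}$ and $h(z)=e^{-\phi(z)}$, and let $v_g(t,y)=1-E\bigl[\prod_k g(\tilde x_k(t)-y)\bigr]$ and $v_h(t,y)=1-E\bigl[\prod_k h(\tilde x_k(t)-y)\bigr]$. Both solve \eqv(fkpp) by McKean's identity, and a direct manipulation using $\1_{\{\max>y\}}=1-\prod_k\1_{\{\tilde x_k\leq y\}}$ yields
\be
E\Bigl[\exp\Bigl(-\sum_k\phi(\tilde x_k(t)-\sqrt 2 t-x)\Bigr)\1_{\{\max_k>\sqrt 2 t+x\}}\Bigr]=v_g(t,\sqrt 2 t+x)-v_h(t,\sqrt 2 t+x).
\ee
The initial data $v_g(0,y)=1-e^{-\phi(-y)}\1_{\{y\geq 0\}}$ equals $1$ on $(-\infty,0)$ and has compact support on $[0,\infty)$, hence satisfies hypotheses (i)--(iv) of Proposition \thv(A.Prop1), so $v_g(t,\sqrt 2 t+x)\sim C_g(a)t^{-1/2}e^{-\sqrt 2 x-x^2/(2t)}$.

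The main expected obstacle is obtaining the analogous asymptotic for $v_h$, since its initial data $v_h(0,y)=1-e^{-\phi(-y)}$ is compactly supported and fails hypothesis (iii). I would handle this by rerunning the proof of Proposition \thv(A.Prop1) directly for $v_h$, observing that hypothesis (iii) enters in Proposition~4.3 of \cite{ABK_E} only to pin down the wave behind the Bramson front, while our target range $\sqrt 2 t+x=(\sqrt 2+a)t+o(t)$ sits well ahead of that front. Alternatively, one can bracket $v_h\leq v_h+\e u$ (which does satisfy (iii)) and let $\e\downarrow 0$, using the continuity of the integral representation in Lemma \thv(fz.6) in the initial datum. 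Either route yields $v_h(t,\sqrt 2 t+x)\sim C_h(a)t^{-1/2}e^{-\sqrt 2 x-x^2/(2t)}$ for a finite $C_h(a)$, whence
\be
\lim_{t\uparrow\infty}L_t(\phi)=\frac{C_g(a)-C_h(a)}{C(a)},
\ee
which is manifestly independent of the $o(t)$-piece of $x$, establishing the existence of $\bar\EE$ together with its stated invariance.

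To identify the law of $\max\bar\EE$, specialise $\phi$ to approximate $\lambda\1_{(y,\infty)}$ with $\lambda\uparrow\infty$, for $y>0$. Then
\be
P(\max\bar\EE>y)=\lim_{t\uparrow\infty}\frac{u(t,\sqrt 2 t+x+y)}{u(t,\sqrt 2 t+x)}=\lim_{t\uparrow\infty}e^{-\sqrt 2 y-(2xy+y^2)/(2t)}=e^{-(\sqrt 2+a)y},
\ee
by Proposition \thv(A.Prop1) (the $C(a)t^{-1/2}$ prefactors cancel since $x/t\to a$ uniformly by the uniform-in-compacts statement of that proposition). Hence $\max\bar\EE\sim\mathrm{Exp}(\sqrt 2+a)$, as claimed.
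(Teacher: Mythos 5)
Your overall strategy is sound and is in fact the strategy the paper implicitly relies on: the paper's own proof only carries out the first half explicitly (the exponential tail of the conditional maximum, obtained from the ratio $\Psi(r,t,X+x+\sqrt 2 t)/\Psi(r,t,x+\sqrt 2 t)\to e^{-(\sqrt 2+a)X}$ at finite $r$, then $r\to\infty$) and defers the convergence of the full conditioned point process to the proof of Theorem 3.4 in \cite{ABK_E}, which proceeds exactly by the Laplace-functional ratio $(v_g-v_h)/u$ that you write out. Your identity for the numerator, the verification that $v_g$ satisfies hypotheses (i)--(iv), and the computation of the limiting max law are all correct. One small stylistic difference: by working with the $\Psi$-ratio at finite $r$, the paper never needs the limiting constants $C(a)$ for the max law, whereas you invoke the uniform-in-$a$ convergence of Proposition \thv(A.Prop1); both are fine.

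The one step that does not survive scrutiny is your second proposed fix for $v_h$: the bracketing ``$v_h\leq v_h+\e u$'' is meaningless here because the F-KPP equation is nonlinear, so $v_h+\e u$ is not a solution, and replacing it by the solution with initial datum $v_h(0,\cdot)+\e u(0,\cdot)$ still requires a continuity-in-$\e$ argument for the constant that you do not supply. Your first route (rerunning Proposition \thv(A.Prop1) for compactly supported data) is morally right but requires reopening Proposition 4.3 of \cite{ABK_E} to check that hypothesis (iii) is only used behind the front. The clean repair, which the paper itself uses in the analogous situation, is the monotone truncation in $\d$: apply Proposition \thv(A.Prop1) to $v_{h,\d}$ with initial datum $1-e^{-\phi(-y)}\1_{\{y\geq -\d\}}$ (which satisfies (iii)), and then pass to the limit $\d\uparrow\infty$ exactly as in Proposition \thv(E.Prop1) and Lemma \thv(E.Lem2), following Lemma 4.10 of \cite{ABK_E}. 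With that substitution your argument is complete; you should also add the standard remark that the limiting functional is the Laplace functional of a genuine (locally finite) point process, which follows from the tightness of the conditioned maximum that you have already established.
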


\begin{proof}
 Set $\bar{\EE}_t=\sum_k \d_{\tilde x_k(t)-\sqrt{2}t}$ and $\max \bar{\EE}_t=\max \tilde x_k(t)-\sqrt{2}t$. First we show that for $X>0$
\be\Eq(Au.4)
\lim_{t\to\infty}P\left(\max \bar{\EE}_t>X+x\vert \max \bar{\EE}_t>x\right)=e^{-(\sqrt{2}+a)X}.
\ee
To see this we rewrite the conditional probability as $\frac{P\left[\max \bar{\EE}_t>X+x\right]}{P\left[\max \bar{\EE}_t>x\right]}$ and use the uniform bounds of Proposition 4.3 in \cite{ABK_E}. Observing that
\be
\lim_{t\to\infty}\frac{\Psi(r,t,X+x+\sqrt{2}t)}{\Psi(r,t,x+\sqrt{2}t)}=
e^{-(\sqrt{2}+a)X},
\ee 
where $\Psi$ is defined in Equation \eqv(A.1),
we get \eqv(Au.4) by first taking $t\to\infty$ and then $r\to\infty$. The general claim of Proposition \thv(Au.Prop3) follows in exactly the same way from \eqv(Au.4) as Theorem  3.4. in \cite{ABK_E}.
\end{proof}

Define the gap process
\be\Eq(fz.17)
D_t=\sum_k \d_{\tilde x_k(t)-\max_j \tilde x_j(t) }.
\ee
Denote by $\xi_i$ the atoms of the limiting process $\bar {\EE}$, i.e.
 $\bar{\EE}\equiv \sum_j\d_{\xi_j}$ and  define
\be\Eq(Au.7)
D\equiv\sum_j \d_{\L_j},\quad \L_j=\xi_j-\max_i \xi_i.
\ee
$D$ is a point process on $(-\infty.0]$ with an atom at $0$.

\begin{corollary} \TH(Au.Cor1)
Let $x=-at+o(t)$. In the limit $t\to\infty$ the random variables $D_t$ and $x+\max \bar \EE_t$ are conditionally independent on the event $\{x+\max \bar{\EE}_t>b\}$ for any $b\in\R$. More precisely, for any bounded function $f,h$ and $\bar{\phi}\in C_c(\R)$,
\bea\Eq(Au.5)
&&\lim_{t\to\infty}E\left[f\left(\int \bar{\phi}(z)D_t({d}z)\right)h(x+\max \bar{\EE})\big|x+\max \bar{\EE}>b\right]\nonumber\\
&=& E\left[f\left(\int \bar{\phi}(z)D({d}z)\right)\right]\frac{\int_b^\infty h(z)(\sqrt{2}+a)e^{-(\sqrt{2}+a)z}{d}z}{e^{-(\sqrt{2}+a)b}}.
\eea
\end{corollary}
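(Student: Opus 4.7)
The plan is to combine Proposition \thv(Au.Prop3) with the shift-invariance of the gap process $D_t$ to upgrade the one-dimensional statement about the conditional tail of the maximum into a joint factorization of the limiting law.

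First, I would exploit the fact that $D_t=\sum_k \d_{\tilde x_k(t)-\max_j \tilde x_j(t)}$ is invariant under a common translation of all particles. Hence $D_t$ is a measurable functional of the shifted process $\bar{\EE}_t^{(y)}\equiv \sum_k \d_{\tilde x_k(t)-\sqrt 2 t - y}$ for any $y\in\R$. Choosing $y=b-x$, which lies in the regime $y=at+\po(t)$ covered by Proposition \thv(Au.Prop3) (recalling $x=-at+\po(t)$), the conditioning event $\{x+\max\bar{\EE}_t>b\}$ is the same as $\{\max\bar{\EE}_t^{(y)}>0\}$. Proposition \thv(Au.Prop3) then says $\bar{\EE}_t^{(y)}$ converges in law under this conditioning to $\bar{\EE}$. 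Since $\bar\phi$ has compact support, the functional $\mu\mapsto f(\int\bar\phi\, d\mu)$ applied to the associated gap process is continuous for this mode of convergence, and we conclude that $D_t\to D$ in law under $P(\cdot\mid Z_t>b)$, where I write $Z_t\equiv x+\max\bar\EE_t$.

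Second, I would establish the joint factorization by first testing the second slot against indicators. For $c\geq b$ and $f$ bounded continuous, write
\begin{equation}
\frac{E\!\left[f\!\left(\int\bar\phi(z)D_t({d}z)\right)\1_{Z_t>c}\right]}{P(Z_t>b)}
=\frac{P(Z_t>c)}{P(Z_t>b)}\cdot E\!\left[f\!\left(\int\bar\phi(z)D_t({d}z)\right)\,\Big|\,Z_t>c\right].
\end{equation}
By the previous step applied at level $c$ (still of the form $c-x=at+\po(t)$), the conditional expectation on the right converges to $E[f(\int\bar\phi(z)D({d}z))]$, a limit which is independent of $c$. By equation \eqv(Au.4) the probability ratio tends to $e^{-(\sqrt 2+a)(c-b)}$. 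Hence the indicator case of \eqv(Au.5) holds. To get the result for general bounded $h$ vanishing on some half-line $(-\infty,b_0]$, I would approximate $h$ on $[b,\infty)$ by piecewise constant functions $\sum_i h_i \1_{[b_i,b_{i+1})}$, write $\1_{[b_i,b_{i+1})}=\1_{[b_i,\infty)}-\1_{[b_{i+1},\infty)}$, and apply the indicator case term by term. The resulting Riemann sum converges to
\begin{equation}
E\!\left[f\!\left(\int\bar\phi(z)D({d}z)\right)\right]\cdot\frac{\int_b^\infty h(z)(\sqrt 2+a)e^{-(\sqrt 2+a)z}{d}z}{e^{-(\sqrt 2+a)b}},
\end{equation}
which is exactly the right-hand side of \eqv(Au.5).

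The main technical obstacle is that to justify interchanging the partition limit with the $t\to\infty$ limit, one needs the convergence in the second factor above to be sufficiently uniform as $c$ ranges over the compact set of partition points. This uniformity should follow from the uniform-in-compacts form of the asymptotics in Proposition \thv(A.Prop1) (uniformity in $a$, hence in the corresponding shift level) together with the translation invariance that allowed us to reduce every level $c$ to a common reference. Modulo this uniform control (which is also at the heart of \cite{ABK_E}, Corollary 4.12), the proof is a direct computation along the lines described.
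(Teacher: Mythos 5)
Your argument is correct and takes essentially the same route as the paper, which simply defers to Corollary 4.12 and Lemma 4.13 of \cite{ABK_E}: your indicator-function computation, combining the level-independence of the limiting gap process (Proposition \thv(Au.Prop3) plus shift invariance of $D_t$) with the memoryless exponential tail from \eqv(Au.4), is exactly the ``explicit computation'' the paper alludes to. The one step you treat too lightly is the claimed continuity of the passage from $\bar\EE_t^{(y)}$ to $D_t$ --- recentering at the maximum is not a continuous operation under vague convergence alone, and this is precisely the point for which the paper invokes Lemma 4.13 of \cite{ABK_E}.
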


\begin{proof} The proof is essentially identical to the proof of Corollary 4.12  in \cite{ABK_E}.
Let us outline, for the benefit of the readers, the structure of the proof. First, 
by Proposition \thv(Au.Prop3) the pair $(\bar\EE_t, \max\bar\EE_t-x)$, converge under the law conditioned on $\max\bar\EE_t-x>0$ 
 to $(\EE,e)$, where $e$ is an exponential random variable with parameter
$\sqrt 2+a$ and $\EE$ is independent of the precise value of the conditioning. A general continuity lemma, stated and proven  as Lemma 4.13 in \cite{ABK_E},
shows that this implies the convergence of the processes $(D_t, \max\bar\EE_t-x)$ to 
a pair $(\DD,e)$
where $D_t$ is given in \eqv(fz.17) is related to $\bar\EE_t$ by a random shift of its atoms.
The fact that $\DD$ and $e$ are independent follows from an explicit computation, just as in the proof of Corollary 4.12 in \cite{ABK_E}. We do not repeat the details.
\end{proof}

Finally we come to the description of the extremal process as seen from the Poisson process of cluster extremes, which is the formulation of Theorem 
\thv(speed.1).

\begin{theorem}\TH(Au.Th2)
Let $P_Y$ be as in \eqv(fz.15) and let $\{D^{(i)},i\in\N\}$ be a family of independent copies of the gap-process \eqv(Au.7)
with atoms $\L^{(i)}_j$. Then the point process $\EE_t$ converges in law as $t\to\infty$ to a Poisson cluster point process $\EE$ given by
\be
\EE\stackrel{law}{=}\sum_{i,j}\d_{p_i+\s_2\L_j^{(i)}}.
\ee
\end{theorem}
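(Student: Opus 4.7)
The plan is to derive the Poisson cluster representation by working with the auxiliary process $\Pi_t$ rather than $\EE_t$ directly. By Theorem \thv(Au.Th1) it suffices to show that $\Pi_t$ converges in law to $\sum_{i,j}\d_{p_i+\s_2\L_j^{(i)}}$. First, I would split each cluster around its maximum: with $M^{(i)}(t)=\max_{k\leq n_i(t)}\bar x_k^i(t)$, every atom of $\Pi_t$ reads
\[
\s_2\Bigl(\eta_i+\sfrac{1}{\sqrt 2+a}\log Y+M^{(i)}(t)-\sqrt 2 t\Bigr)\;+\;\s_2\bigl(\bar x_k^i(t)-M^{(i)}(t)\bigr).
\]
The left summand is exactly the $i$-th atom of the process $\Pi_t^{ext}$ of \eqv(fz.14), which by Proposition \thv(Au.Prop1) converges in law to the Poisson process $P_Y$. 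The right summand, as $k$ ranges, produces for each $i$ an independent copy of $\s_2$ times the gap process $D_t$ defined in \eqv(fz.17).

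Second, I would establish joint convergence and the required independence. By Proposition \thv(Au.Prop2), only the Poisson atoms $\eta_i$ lying in a $\sqrt t$-window around $-at$ can contribute to atoms of $\Pi_t$ in a fixed bounded set; equivalently, the relevant shifts $\eta_i+\frac{1}{\sqrt 2+a}\log Y$ are of the form $-at+o(t)$. For such shifts Proposition \thv(Au.Prop3) applies to the cluster $\bar x^{(i)}$ under the event that its rescaled maximum exceeds a given level, and Corollary \thv(Au.Cor1) then yields the crucial fact: conditionally on contributing, the gap process $D_t^{(i)}$ is asymptotically independent of the centred cluster maximum, and converges in law to an independent copy $D^{(i)}$ of the limit process $D$. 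Since the clusters $\bar x^{(i)}$ are mutually independent by construction, the limiting pairs $(p_i,D^{(i)})$ are independent in $i$.

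The rigorous implementation is by Laplace functional. For $\vf\in\CC_c(\R)$ one computes $E[\exp(-\int\vf\,\d\Pi_t)]$ by first conditioning on $\eta$ and $Y$ so that the inner expectation factors over $i$; for each $i$ the inner factor is, up to $\s_2$-rescaling, the Laplace transform of the cluster shifted by $\s_2(\eta_i+\sfrac{1}{\sqrt 2+a}\log Y-\sqrt 2 t)$. On those $i$ whose shifted cluster-maximum falls in the support of $\vf$ (only finitely many, by Proposition \thv(Au.Prop2) combined with Lemma \thv(E.Lem1)), one invokes Corollary \thv(Au.Cor1) to replace the inner factor by the Laplace transform of the limiting gap $D^{(i)}$ evaluated at the appropriate shift of $\vf$, multiplied by the exponential density of the limiting cluster-maximum. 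Integrating the resulting expression against the Poisson law of $\eta$ then identifies the limit as the Laplace functional of the process $P_Y$ decorated by independent clusters $\s_2 D^{(i)}$.

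The main obstacle is precisely this simultaneous exchange of limits over $t$, over the label $i$, and over the conditioning levels for each cluster. Uniformity is provided by the tail bound of Lemma \thv(E.Lem1) (which dominates the contribution of $\eta_i$ outside the $\sqrt t$-window of Proposition \thv(Au.Prop2)) together with the uniform F-KPP bounds \eqv(A.4.2) underlying Corollary \thv(Au.Cor1). With these in hand the argument is a verbatim adaptation of the corresponding proof for standard BBM in \cite{ABK_E}, the only new ingredients being the different Poisson intensity carried by $\eta$ and the overall scaling by $\s_2$, which explains the appearance of $\s_2\L_j^{(i)}$ in the limit.
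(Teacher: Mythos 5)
Your proposal is correct and follows essentially the same route as the paper: reduce to the auxiliary process $\Pi_t$ via Theorem \thv(Au.Th1), compute its Laplace functional using the Poisson structure, localise the contributing atoms with Proposition \thv(Au.Prop2), and invoke Corollary \thv(Au.Cor1) to decouple the gap processes from the cluster maxima before identifying the limit with the Poisson cluster form. The paper phrases the final identification as a direct match of Laplace functionals (its Eq.~\eqv(fz.20)) rather than a two-stage convergence of $\Pi_t^{ext}$ plus decorations, but this is only a presentational difference.
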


\begin{proof} Also this proof is now very close to that of Theorem 2.1 in \cite{ABK_E}. 
First note that the Laplace functional of the process $\EE$ is given by 
\bea\Eq(fz.20)
&&E \left[\exp\left(-\int \phi(x)\EE(dx)\right)\right]
\\\nonumber
&&=E\left[\exp\left(-C(a)Y\int_\R E\left[1-e^{-\int \phi(y+x)D( dx)}\right]\sqrt{2} e^{-\sqrt{2}y}{d}y\right)\right].
\eea
Thus, by Theorem \thv(Au.Th1), we have  to show that
the Laplace functional of the processes $\Pi_t$ converge to this 
expression. In the proof of that theorem, we have seen that 
\bea\Eq(fz.21)
&&
\lim_{t\uparrow\infty}E\left[\exp\left(-\int \phi(x)\Pi_t(dx)\right)\right]\\
&&
\nonumber
=E\left[\exp\left(-Y\lim_{t\uparrow\infty}
\int_{-\infty}^0E\left[1-\exp\left(-\int \bar{\phi}(z+x)\bar{\EE}_t({d}x)\right)\right]\frac{e^{-(\sqrt{2}+a)z-a^2t/2}}{\sqrt{2\pi}}{d}z\right)\right].
\eea
We rewrite 
\bea
&& \int_{-\infty}^0E\left[1-\exp\left(-\int \bar{\phi}(z+x)\bar{\EE}_t({d}x)\right)\right]\frac{1}{\sqrt{2\pi}}e^{-(\sqrt{2}+a)z-a^2t/2}{d}z
\nonumber\\=&&
\Eq(Au.9)
 \int_{-\infty}^0E\left[f\left(\int\left\{T_{z+\max \bar\EE_t}\bar{\phi}(x)\right\}D_t({d}x)\right)\right]\frac{1}{\sqrt{2\pi}}e^{-(\sqrt{2}+a)z-a^2t/2}{d}z,
\eea
where $f(x)=1-e^{-x}$,  $T_z\bar{\phi}(x)=\bar{\phi}(z+x)$, $f(0)=0$. Using the localisation estimate of Proposition \thv(Au.Prop2) we have that \eqv(Au.9) is equal to
\be\Eq(fz.22)
\Omega_t(B)+\int_{-at-B\sqrt{t}}^{-at+B\sqrt{t}}E\left[f\left(\int\left\{T_{z+\max \bar\EE_t}\bar{\phi}(x)\right\}D_t({d}x)\right)\right]\frac{1}{\sqrt{2\pi}}e^{-(\sqrt{2}+a)z-a^2t/2}{d}z,
\ee
where $\lim_{B\to\infty}\sup_{t\geq t_0}\Omega_t(B)=0$. 
Let $m_{\bar\phi}$ be the minimum of the support of $\bar{\phi}$. we observe that
\be
f\left(\int\left\{T_{z+\max \bar\EE_t}\bar{\phi}(x)\right\}D_t({d}x)\right)=0,
\ee
when $z+\max \bar{\EE_t}<m_{\bar{\phi}}$. Moreover, $P\left[z+\max \bar{\EE}_t=m_{\bar{\phi}}\right]=0$. Hence
\bea\Eq(fin.1)
& &E\left[f\left(\int\left\{T_{z+\max\bar \EE_t}\bar{\phi}(x)\right\}D_t({d}x)\right)\right]\\
&=&E\left[f\left(\int\left\{T_{z+\max \bar\EE_t}\bar{\phi}(x)\right\}D_t({d}x)\right)\1_{\{z+\max \bar{\EE}_t>m_{\bar{\phi}}\}}\right]\nonumber\\
&=&E\left[f\left(\int\left\{T_{z+\max \bar\EE_t}\bar{\phi}(x)\right\}D_t({d}x)\right)\big | z+\max\bar\EE_t>m_{\bar{\phi}}\right] P\left[z+\max \bar{\EE}_t>m_{\bar{\phi}}\right].\nonumber
\eea
Now by Corollary \thv(Au.Cor1), for $z$ in the range of integration in 
\eqv(fz.22), on the event we are conditioning on in \eqv(fin.1), 
the random variables $D_t$ and $\max\bar\EE_t+z-m_{\bar\phi}$ converge to independent random variables $(D, e)$, where $e$ is exponential with 
parameter $\sqrt 2+a$.  Hence 
\bea\Eq(fz.23)\nonumber
&&\lim_{t\uparrow\infty}E\left[f\left(\int\left\{T_{z+\max \bar\EE_t}\bar{\phi}(x)\right\}D_t({d}x)\right)\big | z+\max\bar\EE_t>m_{\bar{\phi}}\right] 
\\ &&= \int_{0}^\infty  (\sqrt 2+a)e^{-(\sqrt 2+a)u}
E\left[f\left(\int\bar{\phi}(u+m_{\bar\phi}+x)D({d}x)\right)\right]du
\\\nonumber
&&
 \int_{m_{\bar\phi}}^\infty  (\sqrt 2+a)e^{-(\sqrt 2+a)(u-m_{\bar\phi})}
E\left[f\left(\int\bar{\phi}(u+x)D({d}x)\right)\right]du.
\eea
Note that this expression is independent of $z$. Thus it remains to compute the integral of
$P\left[z+\max \bar{\EE}_t>m_{\bar{\phi}}\right]$. But this converges to 
$e^{-(\sqrt 2+a)m_{\bar\phi}}$ by \eqv(E.12) in Lemma \thv(E.Lem2), 
together with the localisation estimates of Proposition \thv(Au.Prop2)
(which this time allows to re-extend the range of integration). Putting this together with \eqv(fz.23) and changing variables $x=\s_2z$ shows that
the right-hand side of \eqv(fz.21) is indeed equal to the right-hand side of 
\eqv(fz.20). This proves the theorem.
\end{proof}

\section{The case $\s_1>\s_2$}

In this section we proof Theorem \thv(speed.2). The existence of the process $\EE$ from \eqv(speed.5) will be a byproduct of the proof.

The following result  is contained in the  calculation of the maximal displacement in \cite{FZ_RW}.

\begin{lemma}\TH(K.Lem1)(\cite{FZ_RW})
For all $\e>0,d\in\R$ there exists a constant $D$ large enough such that for $t$ sufficiently large
\be
\P\left[\exists k\leq n(t):x_k(t)>m(t) +d \mbox{ \rm and } x_k(bt)<m_1(bt)-D\right]<\e.
\ee
\end{lemma}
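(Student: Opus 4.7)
The plan is to adapt the first-moment strategy of Proposition \thv(P.Prop1) to the regime $\s_1 > \s_2$. Markov's inequality bounds the probability by the expected number of ``bad'' particles $k$ with $x_k(t) > m(t) + d$ and $x_k(bt) < m_1(bt) - D$. Writing $u_i := m_1(bt) - x_i(bt)$ for an ancestor $i$ at time $bt$, and using the algebraic identity $m(t) = m_1(bt) + m_2((1-b)t)$ (with $m_1(s) = \sqrt 2\s_1 s - (3\s_1/(2\sqrt 2))\log s$ and analogously $m_2$), the bad event requires $u_i > D$ for some $i$ whose cluster maximum---distributed as $\s_2$ times the max of an independent standard BBM at time $(1-b)t$---exceeds $m_2((1-b)t) + d + u_i$.

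I would then apply the Bramson tail of Lemma \thv(E.Lem1), with $z = (d+u_i)/\s_2$, to bound the conditional probability by $C(d+u_i)\exp(-\sqrt 2(d+u_i)/\s_2 - (d+u_i)^2/(2\s_2^2(1-b)t))$. Combined with Markov's inequality, this reduces the task to bounding
\[
\E\Bigl[\sum_i \1_{u_i > D}(d+u_i)\,e^{-\sqrt 2(d+u_i)/\s_2}\Bigr].
\]

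The main obstacle is that a direct many-to-one computation of this outer expectation produces a spurious prefactor of order $t$, because the raw many-to-one expected density at level $m_1(bt) - u$ is of order $t\,e^{\sqrt 2 u/\s_1}$; many of the counted particles have ancestral paths that made unusual early excursions. To eliminate the spurious $t$, I would restrict the sum to particles whose ancestral paths lie in a tube analogous to $\TT_{bt,r}$ of \eqv(D.11), and argue by a Gaussian calculation along the lines of Propositions \thv(P.Prop1)--\thv(Prop.p.5) that bad-path particles contribute negligibly. Under the good-path restriction, a ballot-theorem estimate for Brownian bridges (essentially the Lalley--Sellke derivative-martingale bound) reduces the effective density at level $m_1(bt) - u$ to order $u\,e^{\sqrt 2 u/\s_1}$.

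Plugging this improved density back in yields
\[
\P[\,\cdot\,] \lesssim e^{-\sqrt 2 d/\s_2}\int_D^\infty u(d+u)\,e^{-\sqrt 2 u(1/\s_2 - 1/\s_1)}\,du \lesssim D^2\,e^{-\sqrt 2 D(1/\s_2 - 1/\s_1)},
\]
which can be made smaller than $\e$ by taking $D$ large. The sign condition $\s_1 > \s_2 \Rightarrow 1/\s_2 - 1/\s_1 > 0$ is the only place the regime hypothesis is used. Since the rigorous justification of the good-path density estimate is essentially the upper-bound half of the Fang--Zeitouni calculation of the maximal displacement, one may simply invoke \cite{FZ_RW} directly, consistent with the remark that this lemma is ``contained in'' that computation.
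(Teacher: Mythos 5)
Your sketch is correct and is essentially the argument the paper relies on: the paper itself offers no proof of this lemma beyond the citation, stating only that it is ``contained in the calculation of the maximal displacement in \cite{FZ_RW}'', and that calculation is precisely the first-moment bound you describe, with the ballot/barrier refinement needed to remove the spurious factor of order $t$ coming from the $\log$-correction in $m_1(bt)$. You correctly identify both the main obstacle (the raw many-to-one density $\sim t\,e^{\sqrt 2 u/\s_1}$ versus the effective density $\sim u\,e^{\sqrt 2 u/\s_1}$ on good paths) and the single place where $\s_1>\s_2$ enters, namely the convergence of $\int_D^\infty u(d+u)e^{-\sqrt 2 u(1/\s_2-1/\s_1)}\,du$.
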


\begin{proof}[Proof of Theorem \thv(speed.2)]  
First we  establish the existence of a limiting process. 
Note that $m(t)=m_1(bt)+m_2((1-b)t)$, where
 $m_i(s)=\sqrt{2}\s_is-\frac{3}{2\sqrt{2}}\s_i\log s$. Recall  
\be
\bar{\phi}(z)=\phi(\s_2 z)
\ee
and
\be
g_\d(z)= e^{-\bar{\phi}(-z)}\1_{\{-z\leq\d\}}.
\ee
Using that the maximal displacement is $m(t)$ in this case we can proceed as in the  proof of Theorem \thv(E.Th1) up to \eqv(E.14) and only have to control
\be\Eq(K.2)
\Psi_t^{<\d}(\phi)=\E\left[\prod_{i\leq n(tb)}\E\left[\prod_{j\leq n_i((1-b)t)} g_\d((m(t)-x_i(bt))/\s_2-\bar{x}_{j}^i((1-b)t))\big | \FF_{tb}\right]\right],
\ee
where $\bar{x}_{j}^i((1-b)t)$ are the particles of a standard BBM 
 at time $(1-b)t$ and $x_i(bt)$ are the particles of a BBM with variance $\s_1$ at time $bt$. 
 Using Lemma \thv(K.Lem1) and Theorem 1.2 of \cite{FZ_RW}
 as in the proof of Theorem \thv(M.Th1) above,  we obtain that  \eqv(K.2),  for $t$ sufficiently large, equals 
\be\Eq(K.3)
\\E\left[\prod_{\stackrel{i\leq n(bt)}{x_i(bt)>m_1(bt)-D}}\E\left[\prod_{j\leq n_i((1-
b)t)} g_\d(\sfrac{(m(t)-x_i(bt))}{\s_2}-\bar{x}_{j}^i((1-b)t))\big | \FF_{tb}\right]
\right] +O(\e).
\ee
The rest of the proof has an iterated structure. In a first step we show that 
conditioned on $\FF_{bt}$ for each $i\leq n(bt)$ the points $\{x_{i}(bt)+x_j^{i}
((1-b)t)-m(t)\vert x_i(bt)>m_1(bt)-D\}$ converge to the corresponding points of 
the 
 point process  $x_i(bt)-m_1(bt)+\s_2\tilde \EE^{(i)}$, where $\tilde \EE^{(i)}$ are independent copies of the extremal process \eqv(old.3) of standard BBM. To this end observe that 
\be 
u_\d((1-b)t,z)=1-\E\left[\prod_{j\leq n((1-b)t)} g_\d(z-\bar{x}_{j}^i((1-b)t)) 
\right]
\ee 
 solves the F-KPP  equation \eqv(fkpp) with initial condition 
 $u_{\d}(0,z)=1-e^{-\bar{\phi}(-z)}
 \1_{\{-z\leq \d\}}$. Moreover, the assumptions of  Lemma 4.9 in \cite{ABK_E} are satisfied. Hence \eqv(K.3) is equal to
\be\Eq(K.4)
 \e+ \E\left[\prod_{\stackrel{i\leq n(bt)}{x_i(bt)> m_1(bt)-D}}\left( \E\left[e^{-C(\bar\phi,\d)Ze^{-\sqrt{2}\frac{m_1(bt)-x_i(bt)}{\s_2}}}\big|\FF_{bt}\right]
 (1+o(1))\right)\right].
\ee
Here $C(\bar\phi,\d)$ is from standard BBM, i.e.
\be\Eq(normal.1)
C(\bar\phi,\d)=\lim_{t\uparrow\infty} \sqrt {\sfrac 2\pi}\int_0^\infty
u_\d(t,y+\sqrt 2 t)ye^{\sqrt 2y}dy,
\ee
see Eq. 4.49 in \cite{ABK_E}.  Note furthermore that already in \eqv(K.4) the concatenated structure of the limiting point process becomes visible.
In a second step we establish that the points $x_i(bt)-m_1(t)$ that have a descendant in the lead at time $t$ converge to $\wt\EE$.

Define 
\be\Eq(seltsam.1)
h_{\d,D}(y)\equiv 
\begin{cases}
\E\left[\exp\left(-C(\bar \phi,\d)Ze^{-\sqrt{2}\frac{\s_1}{\s_2}y}\right)\right], &\,\,\mbox{ \rm if }\,\, \s_1y<D,
\\
1, &\,\,\mbox{ \rm if }\,\, \s_1y\geq D.
\end{cases}
\ee
Then the expectation in \eqv(K.4) can be written as (we ignore the error term $o(1)$ which is easily controlled using that the probability that the number of 
terms in the product is larger than $N$ tends to zero as $N\uparrow\infty$,
uniformly in $t$)
\be\Eq(seltsam.2)
\E\left[\prod_{i\leq n(bt)} h_{\d,D}(m_1(bt)/\s_1-\bar x_i(t))\right],
\ee 
where now $\bar x$ is standard BBM. 
Defining

\be\Eq(seltsam.4)
v_{\d,D}(t,z)=1-\E\left[\prod_{i\leq n(t)} h_{\d,D}(z-\bar{x}_i(bt))\right],
\ee
$v_{\d,D} $ is a solution of the F-KPP equation \eqv(fkpp) with initial condition
$v_{\d,D}(0,z)= 1-h_{\d,D}(z)$. But this initial condition satisfies the 
assumptions of Bramson's Theorem A in \cite{B_C} and therefore,  
\be\Eq(seltsam.5)
v_{\d,D}(t,m(t)+x)\to \E\left[e^{-\tilde C(D,Z,C(\bar\phi,\d))\tilde Ze^{-\sqrt 2 x}}\right].
\ee
where $\tilde Z$ is an independent copy of $Z$ and 
\be\Eq(normal.2)
\tilde C(D,Z,C(\bar\phi,\d))=\lim_{t\uparrow\infty} \sqrt {\sfrac 2\pi}\int_0^\infty
v_{\d,D}(t,y+\sqrt 2 t)ye^{\sqrt 2y}dy.
\ee
 By the same argumentation as in standard BBM  setting (see \cite{ABK_E})  one obtains that 
\be
\tilde C(Z,C(\bar\phi,\d))\equiv \lim_{D\uparrow\infty}\tilde C(D,Z,C(\bar\phi,\d))= \lim_{t\uparrow\infty} \sqrt {\sfrac 2\pi}\int_0^\infty
v_\d(t,y+\sqrt 2 t)ye^{\sqrt 2y}dy,
\ee
where $v_\d$ is the solution of the F-KPP equation with  initial condition $v(0,z)=1-h_{\d}(z)$ with
\be
h_{\d}(z)=
\E\left[\exp\left(-C(\bar\phi,\d)Ze^{-\sqrt{2}\frac{\s_1}{\s_2}z}
\right)\right].
\ee
The next step is to take the limit $\d\to\infty$. 
Using  Lemma 4.10 of \cite{ABK_E} we have that $C(\bar\phi,\d)$  
is monotone decreasing in $\d$ and 
$\lim_{\d\to\infty}C(\bar\phi,\d)=C(\bar\phi)$, exists and is strictly positive, where
 \be C(\bar\phi)=\lim_{t\uparrow\infty} \sqrt {\sfrac 2\pi}\int_0^\infty
u(t,y+\sqrt 2 t)ye^{\sqrt 2y}dy.
\ee
 Here $u(t,x)$
 is a solution to the F-KPP equation \eqv(fkpp)
 with initial condition $u(0,x)=1-e^{-\bar\phi (-x)}$.
Using the same monotonicity arguments shows that also  
\be
\lim_{\d\to\infty}\tilde C(Z,C(\bar\phi,\d))=\tilde C(Z,C(\bar\phi)).
\ee
Therefore, taking the limit first as $D\uparrow\infty$ and then $\d\uparrow \infty$ in the left-hand side of  \eqv(seltsam.5), 
we get that 
\bea\Eq(K.7)
\lim_{t\to\infty}\Psi_t(\phi(\cdot+x))
&=&\lim_{\d\uparrow\infty}\lim_{t\to\infty}\Psi^{<\d}_t(\phi(\cdot+x))
\\\nonumber
&=&\lim_{\d\uparrow\infty}\lim_{D\uparrow\infty}
\lim_{t\to\infty} v_{\d,D}(t,m(t)+x)
=\E\left[e^{-\tilde C(Z,C(\bar\phi))\tilde Ze^{-\sqrt 2 x}}\right].
\eea
To see that the constants $\tilde C(Z,C(\bar\phi))$ are strictly positive, one uses the Laplace functionals 
$\Psi_t(\phi)$  are bounded from above by
\be\Eq(fz.30)
\E\left[\exp\left({-\phi\left(\max_{i\leq n(bt)} x_i(bt)+\max_{j\leq n_1((1-b)t)}
x_j^1((1-b)t)-m(t)\right)}\right)\right]
\ee
Here we used that the offspring of any of the particles at time $bt$ has the same law. So the sum of the two maxima in the expression above has the same distribution as the largest descendent at time $t$ off the largest particle at time $bt$.
The limit of Eq. \eqv(fz.30) as $t\uparrow\infty$ exists and is strictly smaller than $1$ by the convergence in law of the recentered maximum of a standard BBM. 
But this implies the positivity of the constants $\tilde C$. Hence a limiting point process exists.
Finally,  one may easily check that the right hand side of \eqv(K.7) coincides with the Laplace functional of the point process defined in \eqv(speed.5) by basically repeating the computations above.
\end{proof}

\begin{remark}
Note that in particular, the structure of the variance profile is contained in the constant $\tilde C(D,Z,C(\bar\phi,\d))$ and that also the information on the structure of the limiting point process is contained in this constant. 
In fact, we see that in all cases we have considered in this paper, the Laplace functional of the limiting process has the form 
\be\Eq(universal.1)
\lim_{t\uparrow\infty} \Psi_t(\phi(\cdot+x)) = \E \exp\left( -C(\phi) M  e^{-\sqrt 2  x}\right),
\ee
where $M$ is a martingale limit (either $Y$ of $Z$) and 
$C$ is a map from the space of positive continuous functions with compact support to the real numbers. This function contains all  the information on 
the specific limiting process. This is compatible with the finding in \cite{MZ} in the case where the speed is a concave function of 
$s/t$. 
The universal form \eqv(universal.1) is thus misleading and without knowledge of the specific form of $C(\phi)$, \eqv(universal.1) contains almost no information. 
\end{remark}

\end{document}